\newtheorem{thm}{Theorem}[section]
\newtheorem{prop}[thm]{Proposition}
\newtheorem{lem}[thm]{Lemma}
\newtheorem{cor}[thm]{Corollary}  \theoremstyle{definition}
\newtheorem{df}[thm]{Definition}   \theoremstyle{definition}
\newtheorem{rem}[thm]{Remark}                \theoremstyle{plain}
 \theoremstyle{definition}
\newtheorem{ex}[thm]{Example}   
\def\CC{\Bbb{C}}
\def\RR{\Bbb{R}}  
\def\CCI{\hat{\CC}}        \def\NN{\Bbb{N}} 
\def\B1{{\rm\kern.32em\vrule    width.12em       height1.4ex
depth-.05ex\kern-.28em 1}}
\def\G{\Gamma}
\def\GN{\Gamma ^{\NN }}
\def\g{\gamma }
\def\l{\lambda }
\def\Hol{\text{H\"{o}l}}
\def\emHol{\text{{\em H\"{o}l}}}
\def\Min{\text{Min}}
\def\emMin{\text{\em Min}}
\begin{document}
\title{Random complex dynamics and devil's coliseums  
\footnote{Date: March 13, 2015. Published in Nonlinearity {\bf 28} (2015) 1135-1161.  
This research was partially supported by JSPS KAKENHI 24540211. 
2010 Mathematics Subject Classification. 
37F10, 30D05. Keywords: Rational semigroups, polynomial semigroups, random complex dynamics, 
random iteration, Markov process,    
Julia sets, fractal geometry, (backward) iterated function systems, interaction cohomology,  complex singular functions, 
devil's coliseum, randomness-induced phenomena, cooperation principle.}}
\author{Hiroki Sumi\
\\   Department of Mathematics, Graduate School of Science,\\ 
Osaka University\\   
1-1, Machikaneyama,\ Toyonaka,\  Osaka,\ 560-0043,\ 
Japan\\ E-mail:    sumi@math.sci.osaka-u.ac.jp\\ 
http://www.math.sci.osaka-u.ac.jp/\textasciitilde sumi/}
\date{}
\maketitle
\begin{abstract}
We investigate the random dynamics of polynomial maps on the Riemann sphere $\CCI $ 
and the dynamics of semigroups of polynomial maps on $\CCI .$ 
In particular, the dynamics of a semigroup $G$ of polynomials whose planar postcritical set 
is bounded and the associated random dynamics are studied. In general, the Julia set of such a $G$ 
may be disconnected. We show that if $G$ is such a semigroup, then regarding the associated random dynamics, 
the chaos of the averaged system disappears in the $C^{0}$ sense, and 
the function $T_{\infty }$ of probability of tending to $\infty \in \CCI $ 
is \Hol der continuous on $\CCI $ and varies only on the Julia set of $G$. Moreover, 
the function $T_{\infty }$ has a kind of monotonicity. 
It turns out that $T_{\infty }$ is a complex analogue of the devil's staircase, and 
we call $T_{\infty }$ a ``devil's coliseum.'' We investigate the details of 
$T_{\infty }$ when $G$ is generated by two polynomials. In this case, 
$T_{\infty }$ varies precisely on the Julia set of $G$, which is a thin fractal set. 
Moreover, under this condition, we investigate the pointwise H\"{o}lder exponents of $T_{\infty }$. 

\end{abstract}
\section{Introduction}
\label{Introduction}
 Some results of this paper have been announced in \cite{S11, Ssugexp} without proofs. 

In this paper, we simultaneously investigate the random dynamics of polynomial maps on the Riemann sphere $\CCI $ and the dynamics 
of polynomial semigroups (i.e., semigroups of non-constant polynomial maps 
where the semigroup operation is functional composition) on $\CCI .$ 

The first study of random complex dynamics was given by J. E. Fornaess and  N. Sibony (\cite{FS}). 
For the motivations to study random complex dynamics, see \cite{Splms10, Scp}. 
For research on random complex dynamics of quadratic polynomials, 
see \cite{BBR,  GQL}.  
For research on random dynamics of polynomials (of general degrees) 
with bounded planar postcritical set, see the author's works \cite{SdpbpI, SdpbpII, SdpbpIII, SU4, Ssugexp}. 
 In \cite{Splms10, Scp}, the author of this paper discussed more general 
random dynamics of rational maps with a systematic approach. 
 
The first study of dynamics of polynomial semigroups was 
conducted by
A. Hinkkanen and G. J. Martin (\cite{HM}),
who were interested in the role of the
dynamics of polynomial semigroups 
while studying
various one-complex-dimensional
moduli spaces for discrete groups,
and
by F. Ren's group (\cite{GR}), 
 who studied 
such semigroups from the perspective of random dynamical systems.
Since the Julia set $J(G)$ (the set of non-normality) of a finitely generated polynomial semigroup 
$G$ generated by $\{ h_{1},\ldots ,h_{m}\} $ has 
``backward self-similarity,'' i.e.,  
$J(G)=\bigcup _{j=1}^{m}h_{j}^{-1}(J(G))$ (see \cite[Lemma 1.1.4]{S1}),  
the study of the dynamics of rational semigroups can be regarded as the study of  
``backward iterated function systems,'' and also as a generalization of the study of 
self-similar sets in fractal geometry.  
For recent work on the dynamics of polynomial semigroups, 
see 
\cite{S1}--\cite{Scp},  
\cite{SS, SU2, SU4, SU5}. 

 In order to consider the random dynamics of a family of polynomials on $\CCI $, for each $z\in \CCI $, 
 let $T_{\infty }(z)$ be the probability of tending to $\infty \in \CCI $ 
starting with the initial value $z\in \CCI .$ 
Note that in the usual iteration dynamics of a single polynomial $f$ with $\deg (f)\geq 2$, 
the function $T_{\infty }$ is equal to the constant $1$ in the basin of infinity, 
and $T_{\infty }$ is equal to the constant $0$ in the filled-in Julia set of $f$. 
Thus $T_{\infty }$ is not continuous at any point in the Julia set of $f.$ 
However, we see the following main results of this paper.\\ 
{\bf Main Results (rough statements).}
\vspace{-2mm} 
\begin{itemize}
\item[(I)] 
If the planar postcritical set (see section~\ref{Main}) of the associated polynomial semigroup $G$ of 
a random dynamical system of complex polynomials is bounded 
and the Julia set of $G$ is disconnected, then 
the ``Julia set'' and the chaos of the averaged system disappears in the ``$C^{0}$'' sense,    
the function 
$T_{\infty }: \CCI \rightarrow [0,1]$ is \Hol der continuous on $\CCI $ 
(i.e., there exist constants $C>0$ and $0<\alpha <1$ such that 
$|T_{\infty }(z_{1})-T_{\infty }(z_{2})|\leq Cd(z_{1},z_{2})^{\alpha }$ for each 
$z_{1},z_{2}\in \CCI $),  and 
$T_{\infty }$ has a kind of monotonicity (e.g., if $J_{1}$ and $J_{2}$ are two 
connected components of the Julia set of $G$ and $J_{1}$ is included in 
a bounded connected component of $\CC \setminus J_{2}$, then 
$\max _{z\in J_{1}}T_{\infty }(z)\leq \min _{z\in J_{2}}T_{\infty }(z)$).  
(For the precise statement, see Theorem~\ref{randomthm1}.) 
\item[(II)]
Under certain conditions $T_{\infty }$ has 
some singular properties (for instance, it varies only on a thin fractal set, the so-called  
Julia set of the associated polynomial semigroup $G$, 
and for almost every point $z_{0}$ in the Julia set of 
$G$ with respect to a nice``invariant measure'', the function $T_{\infty }$ is not differentiable 
at $z_{0}$), and this function is a complex analogue of 
the devil's staircase (Cantor function) or Lebesgue's singular functions 
(see Theorems~\ref{t:hnondiffp}, \ref{t:2gengdis},  
Example~\ref{ex:dc1}). 
(For the definition of the devil's staircase and Lebesgue's singular functions, see \cite{YHK}.) 
Also, the pointwise \Hol der exponents of $T_{\infty }$ are investigated. 
Note that we do not assume hyperbolicity in Theorem~\ref{t:hnondiffp}, 
while in the previous result \cite[Theorem 3.82]{Splms10}, it is assumed that 
$G$ is hyperbolic. 
\end{itemize}
Graphs of $T_{\infty }$ are illustrated in \cite{Splms10}. 
Thus even though the chaos of the averaged system disappears, the system has new kind of complexity. 
These are new phenomena which 
cannot hold in the usual iteration dynamics of a single polynomial. 
Such phenomena in random dynamical systems are called ``{\bf randomness-induced phenomena}''. 
To explain the detail of the above result, we first remark that 
these well-known singular functions (the devil's staircase and Lebesgue's singular functions) defined on $[0,1]$ can be redefined by using random dynamical systems 
on $\RR $ as follows (see \cite{Splms10, Ssugexp}).  
Let $f_{1}(x):= 3x, f_{2}(x):=3(x-1)+1\ (x\in \RR )$ and 
we consider the random dynamical system 
on $\RR $ such that at every step we choose $f_{1}$ with probability $1/2 $ and $f_{2}$ with probability $1/2.$ 
We set $\hat{\RR }:= \RR \cup \{ \pm \infty \} .$ We denote by $T_{+\infty }(x)$ the 
probability of tending to $+\infty \in \hat{\RR }$ starting with the initial value $x\in \RR .$   
Then, we can see that the function $T_{+\infty }|_{[0,1]}:[0,1]\rightarrow [0,1]$ is equal to the devil's staircase. 
 Similarly, let $g_{1}(x):= 2x, g_{2}(x):= 2(x-1)+1\ (x\in \RR )$ and let 
 $0<a<1$ be a constant. We consider the random dynamical system on $\RR $ such that 
 at every step we choose the map $g_{1}$ with probability $a$ and the map $g_{2}$ with 
probability $1-a.$ Let $T_{+\infty ,a}(x)$ be the probability of tending to 
$+\infty $ starting with the initial value $x\in \RR .$ 
Then, we can see that the function $T_{+\infty ,a}|_{[0,1]}: [0,1]\rightarrow [0,1]$ is 
equal to Lebesgue's singular function $L_{a}$ with respect to the parameter $a$ provided $a\neq 1/2.$  
From the above point of view, the function $T_{\infty }:\CCI \rightarrow [0,1]$ is a 
complex analogue of the devil's staircase and Lebesgue's singular functions. 
We call $T_{\infty }$ a ``{\bf devil's coliseum}''(\cite{Splms10, Scp}) .  

We also explain why we focus on polynomial semigroups with bounded planar postcritical set. 
A polynomial semigroup $G$ is said to be postcritically bounded if the planar postcritical set of $G$ 
is bounded. 
It is well-known that if $g\in {\cal P}$, where 
${\cal P}$ denotes the set of polynomials of degree two or more, then $J(g)$ is connected if and only if 
the semigroup $\{ g^{n}\mid n\in \NN \} $ is postcritically bounded.  
However, we remark that there are many examples of elements of 
postcritically bounded polynomial semigroup $G$ with $G\subset {\cal P}$  
such that the Julia set of $G$ is disconnected  
(see section~\ref{Examples}, \cite{SdpbpI,SdpbpIII}). 
In fact, it is easy to construct such examples by using (\ref{eq:pg}) in section~\ref{Main}, 
and 
many systematic studies on the dynamics of 
postcritically bounded polynomial semigroups $G$ with $G\subset {\cal P}$ are given 
in \cite{SdpbpI,SdpbpII,SdpbpIII, SS, S15}.  
Thus we are very interested in the new phenomena regarding  
the dynamics of postcritically bounded polynomial semigroups. 
 
One of the purposes of this paper 
is to combine the study of the dynamics of 
postcritically bounded polynomial semigroups with disconnected Julia set and the study of random dynamics of polynomials. 
To prove Main Result (I) (Theorem~\ref{randomthm1}),   
we need the following result from \cite[Theorem 3.15]{Splms10} and 
\cite[Theorem 1.9]{Scp}: 
For a random dynamical system of complex polynomials generated by finitely many 
elements in ${\cal P}$, 
if the kernel Julia set $J_{\ker }(G):=\cap _{g\in G_{\tau }}g^{-1}(J(G))$ 
of the associated polynomial semigroup $G$ is empty, 
then $T_{\infty }$ is \Hol der continuous on $\CCI $ and there exists a finite dimensional subspace 
$U$ of the space $C(\CCI )$ of continuous functions on $\CCI $ 
with $M(U)=U$, where $M$ denotes the transition operator of 
the system (see section~\ref{Main}),  
and a bounded operator $\pi :C(\CCI )\rightarrow U$ such that 
$M^{n}(\varphi -\pi (\varphi ))\rightarrow 0$ in $C(\CCI )$ as $n\rightarrow \infty $ for each 
$\varphi \in C(\CCI ).$ Therefore, to prove Main Result (I) (Theorem~\ref{randomthm1}),  
it is an important key to prove that if 
$G$ with $G\subset {\cal P}$ is postcritically bounded and the Julia set of $G$ is disconnected, 
 then $J_{\ker }(G)=\emptyset $, 
which is proved in Lemma~\ref{l:pbjkerem} of this paper.  
In order to prove the monotonicity of $T_{\infty }$ and statements \ref{randomthm1tca} and 
\ref{randomthm1q} of Theorem~\ref{randomthm1}, we combine the idea from \cite{Splms10} and 
 new careful observations on the dynamics of postcritically bounded 
 polynomial semigroup $G$ with disconnected Julia set.  

Main Result (II)  (Theorem~\ref{t:hnondiffp}) means that even though the chaos of the averaged system 
disappears in the $C^{0} $ sense as in Main Result (I),   
it can remain in the $C^{\alpha }$ sense with some $\alpha \in (0,1)$,  
where $C^{\alpha }$ denotes the space of 
$\alpha $-H\"{o}lder continuous functions. 
From these, we can say that we have a gradation between chaos and order. 
In the proof (section~\ref{Proofs}) of Main Result (II) (Theorem~\ref{t:hnondiffp}),    
we use Birkhoff's ergodic theorem, potential theory, the Koebe distortion theorem, 
and some observations 
(\cite{SdpbpI}) 
about the space of all connected components of $J(G)$ and the Julia set of the associated real affine semigroup.    

We also prove several results on $2$- or $3$-generator 
postcritically bounded polynomial 
semigroups with disconnected Julia set and associated 
random dynamics (see Theorem~\ref{t:2gengdis}, \ref{t:3genmain}, Corollary~\ref{c:3gengdis}, Remark~\ref{r:3gendet}). 
In order to prove the results 
 on $2$- or $3$-generator postcritically bounded polynomial semigroups $G$ with disconnected Julia set  and associated random dynamics, 
we need the idea of the nerves of backward images of $J(G)$ under elements of $G$ and their 
inverse limit from \cite{S15}, which are related to certain kind of cohomology groups introduced by the author.

In section~\ref{Main}, we give the details of the main results. 
In section~\ref{Background}, we explain the known results and tools to prove the main results. 
In section~\ref{Proofs}, we prove the main results. In section~\ref{Examples}, we give some examples. \\ 
\noindent {\bf Acknowledgment:} The author thanks Rich Stankewitz for valuable comments. 
The author also thanks the referee for checking the manuscript carefully and giving the author 
some valuable comments.  
\section{Main results}
\label{Main}

In this section, we give the details of the main results. 

A 
{\bf polynomial semigroup } is a 
semigroup generated by a family of non-constant 
polynomial maps on the Riemann sphere $\CCI $ with the semigroup operation 
being functional composition(\cite{HM,GR}). 
We set 
${\cal P}:= \{ g:\CCI \rightarrow \CCI \mid 
g \mbox{ is a polynomial}, \deg (g)\geq 2\} $
endowed with the distance $\kappa $ which is defined 
by $\kappa (f,g):=\sup _{z\in \CCI }d(f(z),g(z))$, where $d$ denotes the 
spherical distance on $\CCI .$   
Note that $g_{n}\rightarrow g$ in ${\cal P}$ if and only if 
(i) $\deg (g_{n})=\deg (g)$ for each large $n$, and 
(ii) the coefficients of $g_{n}$ converge appropriately 
to the coefficients of $g$ (\cite{Be}). 
Also, setting ${\cal P}_{n}:=\{ g\in {\cal P} \mid \deg(g)=n\} $ for each 
$n\geq 2$, we have that 
${\cal P}_{n}$ is a connected, open and closed subset of ${\cal P}$, 
${\cal P}_{n}$ is a connected component of ${\cal P}$, and 
${\cal P}_{n}\cong (\CC \setminus \{ 0\} )\times \CC ^{n}$ (\cite{Be}).   
For a polynomial semigroup $G$, we denote by $F(G)$ the Fatou set of $G$, which is defined to be  
the maximal open subset of $\CCI $ where $G$ is equicontinuous with respect to 
the spherical distance on $\CCI $ (for the definition of equicontinuity, see \cite[Definition 3.11]{Be}). 
We call $J(G):=\CCI \setminus F(G)$ the Julia set of $G.$ 
For fundamental properties on the Fatou sets and Julia sets, see \cite{HM, S3}. 
The Julia set is backward invariant under each element $h\in G$, but 
might not be forward invariant. This is a difficulty of the theory of rational semigroups. 
Nevertheless, we ``utilize'' this to investigate the associated random complex dynamics.     
For a non-empty subset $\Lambda $ of ${\cal P}$, 
we denote by $\langle \Lambda \rangle $ the polynomial semigroup 
generated by $\Lambda .$ Thus $\langle \Lambda \rangle =\{ h_{1}\circ \cdots \circ h_{m}\mid 
m\in \NN , h_{1},\ldots ,h_{m}\in \Lambda \} .$ For finitely many polynomial maps $g_{1},\ldots ,g_{m}$, 
we denote by  $\langle g_{1},\ldots ,g_{m}\rangle $ the polynomial semigroup generated by 
$\{ g_{1},\ldots ,g_{m}\} .$ For a polynomial map $g$, we set $F(g):= F(\langle g\rangle )$ and  
$J(g):= J(\langle g\rangle )$.
For a polynomial semigroup $G$, we set $G^{\ast }:= G\cup \{ \mbox{Id}\} $, 
where Id denotes the identity map. For a polynomial semigroup $G$ and a subset $A$ of $\CCI $, 
we set $G(A):= \bigcup _{g\in G}g(A)$ and $G^{-1}(A):= \bigcup _{g\in G}g^{-1}(A).$ 
  
For a polynomial semigroup $G$,    
we set $\hat{K}(G):=\{ z\in \CC \mid G(\{ z\} ) \mbox{ is bounded in } \CC \} .$ 
This is called the {\bf smallest filled-in Julia set} of $G.$  
For a polynomial $g\in {\cal P}$, we set 
$K(g):= \hat{K}(\langle g\rangle ).$ 
For a polynomial semigroup $G$, we set 
$P(G):= \overline{\bigcup _{g\in G}\{ z\in \CCI \mid z \mbox{ is a critical value of } g:\CCI \rightarrow \CCI \} } .$ 
This is called the {\bf postcritical set} of $G.$ 
Note that if $G=\langle \Lambda \rangle $, 
then 
\begin{equation}
\label{eq:pg}
P(G)=\overline{G^{\ast }(\bigcup _{h\in \Lambda }
\{ z\in \CCI \mid z \mbox{ is a critical value of }h:\CCI \rightarrow \CCI \} )}.
\end{equation}
Thus for each $g\in G$, $g(P(G))\subset P(G).$ 
For a polynomial semigroup $G$, we set 
$P^{\ast }(G):= P(G)\setminus \{ \infty \} .$ This is called the {\bf planar postcritical set} of $G.$ 
A polynomial semigroup $G$ is said to be {\bf postcritically bounded} if 
$P^{\ast }(G)$ is bounded in $\CC .$ 
We denote by ${\cal G}$ the set of all postcritically bounded polynomial semigroups $G$ 
with $G\subset {\cal P}.$ Moreover, we set ${\cal G}_{dis}:= \{ G\in {\cal G}\mid 
J(G) \mbox{ is disconnected}\} .$ 
It is well-known that if $g\in {\cal P}$, then $J(g)$ is connected if and only if 
$\langle g\rangle \in {\cal G}.$  
However, we remark that there are many examples of elements of ${\cal G}_{dis}$ 
(see section~\ref{Examples}, \cite{SdpbpI,SdpbpIII}). In fact, it is easy to construct such examples by using (\ref{eq:pg}), and 
many systematic studies on the dynamics of semigroups $G$ in ${\cal G}$ or ${\cal G}_{dis}$ are given 
in \cite{SdpbpI,SdpbpII,SdpbpIII,SS, S15}.  
Thus we are very interested in the new phenomena on ${\cal G}_{dis}$. 
It is very natural to ask {\bf ``what happens for a $G\in {\cal G}_{dis}$ and the associated random dynamics?''  
``How can we classify the elements $G$ in ${\cal G}_{dis}$ in terms of the dynamics of $G$ 
and the associated random dynamics?'' }   
 
For a polynomial semigroup $G$ with $\infty \in F(G),$ we denote by 
$F_{\infty }(G)$ the connected component of $F(G)$ containing $\infty .$  
Note that if $G$ is generated by a compact subset of ${\cal P}$, then 
$\infty \in F(G).$ 
For a polynomial $g\in {\cal P}$, we set $F_{\infty }(g):=F_{\infty }(\langle g\rangle ).$

For a non-empty subset $A$ of $\CCI $ and a point $z\in \CCI $, 
we set $d(z,A):= \inf _{a\in A}d(z,a)$, where $d$ is the spherical distance. 
For a non-empty subset $A$ of $\CCI $ and a positive number $r$, we set 
$B(A,r):= \{ z\in \CCI \mid d(z,A)<r\} .$ 
For a non-empty subset $A$ of $\CC $, we set 
$d_{e}(z,A):= \inf _{a\in A}|z-a|.$ 
For a non-empty subset $A$ of $\CC $ and 
a positive number $r$, we set 
$D(A,r):= \{ z\in \CCI \mid d_{e}(z,A)<r\} .$ 

For a metric space $X$, let ${\frak M}_{1}(X)$ be the space of all 
Borel probability measures on $X$ endowed with the topology 
induced by the weak convergence (thus $\mu _{n}\rightarrow \mu $ in ${\frak M}_{1}(X)$ if and only if 
$\int \varphi d\mu _{n}\rightarrow \int \varphi d\mu $ for each bounded continuous function $\varphi :X\rightarrow \RR $). 
Note that if $X$ is a compact metric space, then ${\frak M}_{1}(X)$ is compact and metrizable. 
For each $\tau \in {\frak M}_{1}(X)$, we denote by supp$\, \tau $ the topological support of $\tau .$  
Let ${\frak M}_{1,c}(X)$ be the space of all Borel probability measures $\tau $ on $X$ such that supp$\,\tau $ is 
compact.     

Let 
$\tau \in {\frak M}_{1}({\cal P} ).$  
In the following, we consider the independent and identically-distributed random dynamical system on $\CCI $ 
such that at every step we choose a polynomial map according to the probability distribution $\tau $ 
(\cite{Splms10}).  
This determines a time-discrete Markov process with time-homogeneous transition probabilities 
on the phase space 
$\CCI $ such that for each $x\in \CCI $ and each Borel subset $A$ of $\CCI $, 
the transition probability $p(x,A)$ from $x$ to $A$ is equal to $\tau (\{ g\in {\cal P}\mid g(x)\in A\} ).$  
We set $\G _{\tau }:= \mbox{supp}\,\tau$ and $X_{\tau }:=($supp $\tau )^{\NN }$. 
We set $\tilde{\tau }:=\otimes _{j=1 }^{\infty }\tau $.  
This is the unique Borel probability measure on ${\cal P} ^{\NN }$ such that,  
for each $n\in \NN $, if $A_{1},A_{2},\ldots ,A_{n}$ are Borel subsets of ${\cal P}$, then 
$\tilde{\tau }(A_{1}\times A_{2}\times \cdots \times A_{n}\times {\cal P} \times {\cal P} \cdots )=
\prod _{j=1}^{n}\tau (A_{j}).$ Note that supp$\, \tilde{\tau }=X_{\tau }.$   
Let $G_{\tau }$ be the polynomial semigroup generated by the polynomials contained in  supp $\tau $.   
We set $C(\CCI ):=\{ \varphi :\CCI \rightarrow \CC \mid 
\varphi $ is continuous$\} $ endowed with the supremum norm.  
We define an operator 
$M_{\tau }:C(\CCI )\rightarrow 
C(\CCI )$ by  
$M_{\tau }(\varphi )(z):=
\int _{{\cal P}}\varphi (g(z))\ d\tau (g)$.
This $M_{\tau }$ is called the transition operator of the random dynamical system associated with 
$\tau .$  
Moreover, we denote by $M_{\tau }^{\ast }:{\cal M}_{1}(\CCI )
\rightarrow {\cal M}_{1}(\CCI )$ the dual of $M_{\tau }$ 
(thus $\int _{\CCI }\varphi (z) d(M_{\tau }^{\ast }(\mu ))(z)=\int_{\CCI } M_{\tau }(\varphi )(z)d\mu (z)$ 
for each $\mu \in {\frak M}_{1}(\CCI ), \varphi \in C(\CCI )$).   
Note that for each $z\in \CCI $, 
$M_{\tau }^{\ast }(\delta _{z})=\int _{{\cal P} }\delta _{g(z)} d\tau (g).$ 
Hence $M_{\tau }^{\ast }$ can be regarded as the {\bf averaged map} of elements of supp$\,\tau $ with respect to $\tau .$ 
We denote by $F_{meas }(\tau )$ 
the set of all 
$\mu \in {\cal M}_{1}(\CCI )$ satisfying the following:   
There exists a neighborhood $B$ of $\mu $ in ${\cal M}_{1}(\CCI )$
such that 
 $\{ (M_{\tau }^{\ast })^{n}:B\rightarrow 
{\cal M}_{1}(\CCI )\} _{n\in \NN }$ is equicontinuous on $B$.   
Moreover, we set $J_{meas}(\tau ):= {\cal M}_{1}(\CCI )\setminus 
F_{meas}(\tau )$.   
We remark that if $h\in {\cal P} $ and   
$\tau =\delta _{h}$ (the Dirac measure at $h$), then  
$J_{meas }(\tau )\neq \emptyset $. In fact,  
by embedding $\CCI $ into ${\cal M}_{1}(\CCI )$ 
under the map 
$z\mapsto \delta _{z}$, 
we have 
$J(h)\subset J_{meas}(\tau )$. 
However, we will see later that for any $\tau \in {\frak M}_{1,c}({\cal P})$ with $G_{\tau }\in {\cal G}_{dis},$ 
$J_{meas}(\tau )=\emptyset $ (Theorem~\ref{randomthm1}).   

Let $G$ be a rational semigroup. 
We say that a non-empty compact subset $K$ of $\CCI $ is a 
minimal set for $(G, \CCI )$ if $K$ is minimal in 
the space 
$\{ L\mid L\mbox{ is a non-empty compact subset of }\CCI , \forall g\in G, g(L)\subset L\} $ 
with respect to inclusion. 
We set $\Min (G,\CCI ):= \{ K\mid K\mbox { is a minimal set for } (G,\CCI )\} .$ 
Note that by Zorn's lemma, $\Min(G,\CCI )\neq \emptyset .$  
For any $\gamma =(\gamma _{1},\gamma _{2},\ldots ,)\in {\cal P} ^{\NN }$ and 
any $n,m\in \NN $ with $n>m$, we set 
$\gamma _{n,m}:=\gamma _{n}\circ \cdots \circ \gamma _{m}.$ 
Let $\tau \in {\frak M}_{1}({\cal P})$ and let $A$ be a non-empty subset of 
$\CCI .$ 
For any $z\in \CCI $, we set  
$T_{A ,\tau }(z):=
\tilde{\tau }(\{\gamma =(\g _{1},\g _{2},\ldots )\in 
{\cal P} ^{\NN } \mid d( \g _{n,1}(z), A)\rightarrow 
0 ,\mbox{as }n\rightarrow \infty \} ).$  
This is nothing else but the {\bf probability of tending to $A $ 
starting with the initial value $z\in \CCI $}   
regarding the random dynamics on $\CCI $ such that at every step we choose a polynomial 
according to $\tau .$  
Moreover, for a point $a\in \CCI $, wet set 
$T_{a,\tau }(z):= T_{\{ a\} ,\tau }(z).$ 
Note that if $G\subset {\cal P}$, then $\{ \infty \} $ is a minimal set for $(G,\CCI ).$ 
Note also that  by \cite[Lemma 5.27]{Splms10}, 
if $\tau \in {\frak M}_{1}({\cal P})$ and if $\infty \in F(G_{\tau })$, then for each connected component $U$ of $F(G_{\tau })$, 
the function $T_{\infty ,\tau}|_{U}$ is  constant (the constant value depends on $U$). 
The main purpose of this paper is to show that 
 if $\tau \in {\cal M}_{1,c}({\cal P})$ satisfies that 
 $G_{\tau }\in {\cal G}_{dis}$, then under certain conditions 
 the function $T_{\infty ,\tau }$ can be regarded as a complex analogue of 
 the devil's staircase. 
The following, which was introduced by the author in \cite{Splms10}, 
is the key to investigating the dynamics of rational semigroups and the random complex dynamics. 
\begin{df}[\cite{Splms10}]
Let $G$ be a rational semigroup. We set  
$J_{\ker}(G):= \bigcap _{g\in G} g^{-1}(J(G))$ and  
this is called the {\bf kernel Julia set} of $G.$ 
\end{df}
\begin{df}[\cite{SdpbpI}]
For any connected sets $K_{1}$ and 
$K_{2}$ in $\CC ,\ $  we write $K_{1}\leq _{s}K_{2}$ to indicate that 
$K_{1}=K_{2}$, or $K_{1}$ is included in 
a bounded component of $\CC \setminus K_{2}.$ Furthermore, 
$K_{1}<_{s}K_{2}$ indicates $K_{1}\leq _{s}K_{2}$ 
and $K_{1}\neq K_{2}.$ 
Moreover, $K_{2}\geq _{s}K_{1}$ indicates $K_{1}\leq _{s}K_{2}$,  
and $K_{2}>_{s}K_{1}$ indicates $K_{1}<_{s}K_{2}.$  
Note that 
$ \leq _{s}$ is a partial order in 
the space of all non-empty compact connected 
sets in $\CC .$ This $\leq _{s}$ is called 
the {\bf surrounding order.} 
\end{df}

\begin{rem}
\label{r:icjto}
For a topological space $X$, we denote by $\mbox{Con}(X)$ the set of all connected components of $X.$ 
Let $G\in {\cal G}_{dis}.$ 
In \cite{SdpbpI}, it was shown that 
$J(G)\subset \CC $, $(\mbox{Con}(J(G)), \leq _{s})$ is totally ordered, 
there exists a unique maximal element $J_{\max }=J_{\max}(G)\in (\mbox{Con}(J(G)),\leq _{s})$, there exists 
a unique minimal element $J_{\min }=J_{\min }(G)\in (\mbox{Con}(J(G)),\leq _{s})$, each element of $\mbox{Con}(F(G))$ 
is either simply connected or doubly connected, 
and the connected component $F_{\infty }(G)$ of $F(G)$ with $\infty \in F_{\infty }(G)$ 
is simply connected. 
Moreover, in \cite{SdpbpI}, it was shown that
${\cal A}\neq \emptyset $, where  
${\cal A}$ denotes the set of all doubly connected components of $F(G)$  
(more precisely, for each $J,J'\in \mbox{Con}(J(G))$ with 
$J<_{s}J'$, there exists an $A\in {\cal A}$ with $J<_{s}A<_{s}J'$), 
$\bigcup _{A\in {\cal A}}A\subset \CC $, and $({\cal A},\leq _{s})$ is totally 
ordered. Note that each $A\in {\cal A}$ is bounded and multiply connected, while for a single $f\in {\cal P}$, 
we have no bounded multiply connected component of $F(f).$   
\end{rem}
We now present the main results of this paper. 
\begin{thm}
\label{randomthm1} 
Let $\tau \in {\frak M}_{1,c}({\cal P})$. Suppose that  
$G_{\tau }\in {\cal G}_{dis}$.  
Then, all of the following \ref{randomthm1c}--\ref{randomthm1-6} hold. 
\begin{enumerate}
\item \label{randomthm1c}
{\em (}{\bf \Hol der Continuity}{\em )} The function $T_{\infty ,\tau}:\CCI \rightarrow 
[0,1]$ is \Hol der continuous on $\CCI $,  $M_{\tau }(T_{\infty ,\tau})=T_{\infty ,\tau}$ and 
$T_{\infty ,\tau }(J(G_{\tau }))=[0,1].$ 
\item \label{randomthm1-1}
For each  $U\in \mbox{{\em Con}}(F(G_{\tau }))$,  
there exists a constant $C_{U}\in [0,1]$ such that  
$T_{\infty ,\tau}|_{U}\equiv C_{U}.$
\item \label{randomthm1m} 
{\em (}{\bf Monotonicity}{\em )}  
Let ${\cal A}:=\{ U\in \mbox{{\em Con}}(F(G_{\tau }))\mid U \mbox{ is  doubly connected} \} $. 

\begin{enumerate}
\item If $A_{1},A_{2}\in {\cal A}$ and $\ A_{1}<_{s}A_{2}$, then  
$C_{A_{1}}<C_{A_{2}}.$ In particular, all elements of   
$\{C_{A}\mid A\in {\cal A}\} $ are mutually distinct.  
\item If $J_{1},J_{2}\in $ {\em Con}$(J(G_{\tau }))$ and  
$J_{1}< _{s}J_{2}$, then  
$\sup _{z\in J_{1}}T_{\infty ,\tau}(z)\leq 
\inf _{z\in J_{2}}T_{\infty ,\tau}(z)$.  
\end{enumerate}
\item \label{randomthm1tca}
For each $A\in {\cal A}$,  
$T_{\infty ,\tau}|_{\hat{K}(G_{\tau })}\equiv 0
<C_{A}<1\equiv C_{F_{\infty }(G_{\tau })}.$
\item \label{randomthm1q} 
Let $Q$ be an open subset of $\CCI $.  
If  
$Q\bigcap \left( \bigcup _{A\in {\cal A}}\partial A
\bigcup \partial (F_{\infty }(G_{\tau }))
\bigcup \partial (\hat{K}(G_{\tau }))\right) \neq \emptyset $, 
then  
$T_{\infty ,\tau}|_{Q}$ is not constant.  
\item \label{randomthm1-5-1} 
We have that $J_{\ker}(G_{\tau })=\emptyset $ and 
$F_{meas}(\tau )={\cal M}_{1}(\CCI )$.  
\item \label{randomthm1-min} $\sharp \emMin (G_{\tau },\CCI )=2.$ 
More precisely, $\{ \infty \} $ is a minimal set for $(G_{\tau },\CCI )$, 
and there exists a unique minimal set $L_{\tau }$ for $(G_{\tau },\CCI )$ 
such that $L_{\tau }\subset \hat{K}(G_{\tau }).$  
\item \label{randomthm1az} 
For each $z\in \CCI $, there exists a Borel subset ${\cal A}_{z}$ of ${\cal P}^{\NN }$ 
with $\tilde{\tau }({\cal A}_{z})=1$ such that 
for each $\gamma =(\gamma _{1},\gamma _{2},\ldots ,)\in {\cal A}_{z}$, 
{\em (a)} 
either $\gamma _{n,1}(z)\rightarrow \infty $ or $d(\gamma _{n,1},L_{\tau })\rightarrow 0$ as $n\rightarrow \infty $, 
and 
{\em (b)} 
there exists a number $\delta =\delta (z,\gamma )>0$ such that 
diam$(\gamma _{n,1}(B(z,\delta )))\rightarrow 0$ as $n\rightarrow \infty .$ 
\item \label{randomthm1-6}
There exists a unique $M_{\tau }^{\ast }$-invariant Borel probability measure 
$\mu _{\tau} $ 
on 
$\hat{K}(G_{\tau })$
which satisfies the following $(\ast )$. 
\begin{itemize}
\item[$(\ast )$] For each 
$\varphi \in C(\CCI )$,  
$M_{\tau }^{n}(\varphi )(z)\rightarrow 
T_{\infty ,\tau}(z)\cdot \varphi (\infty )
+(1-T_{\infty ,\tau}(z))\cdot 
(\int _{\CCI }\varphi \ d\mu _{\tau })\ \ (n\rightarrow \infty )
$ uniformly on $\CCI $. 
\end{itemize}   
Thus 
$(M_{\tau }^{\ast})^{n }
(\nu ) \rightarrow 
(\int _{\CCI } T_{\infty ,\tau}\ d\nu )\cdot 
\delta _{\infty }+(\int  _{\CCI }(1-T_{\infty ,\tau})\ d\nu )\cdot \mu _{\tau }  
\ \ (n\rightarrow \infty )$ uniformly on ${\cal M}_{1}(\CCI )$.  
Also, {\em supp}$\,\mu _{\tau }=L_{\tau }.$ 
Moreover, the $M_{\tau }$-invariant subspace of $C(\CCI )$ 
is two-dimensional and it is spanned by the constant function and $T_{\infty ,\tau}$. 
Moreover, the set of ergodic components of $M_{\tau }^{\ast }$-invariant elements in ${\frak M}_{1}(\CCI )$  
 is equal to $\{ \delta _{\infty } , \mu _{\tau }\} $. 
\end{enumerate} 

\end{thm}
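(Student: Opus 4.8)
The plan is to read off the ``soft'' conclusions of the theorem from the general machinery of \cite{Splms10} once the kernel Julia set is known to be empty, and then to extract the fine structure -- two minimal sets, the limit formula, monotonicity, and the strict inequalities -- from the surrounding-order description of $G_{\tau }\in {\cal G}_{dis}$ recalled in Remark~\ref{r:icjto}. First I would record the elementary reductions. Since $\tau \in {\frak M}_{1,c}({\cal P})$, the semigroup $G_{\tau }$ is generated by the compact set $\mbox{supp}\,\tau $, so $\infty \in F(G_{\tau })$ and $F_{\infty }(G_{\tau })$ is defined; $\hat{K}(G_{\tau })$ is a nonempty compact set (\cite{SdpbpI}) with $g(\hat{K}(G_{\tau }))\subset \hat{K}(G_{\tau })$ for every $g\in G_{\tau }$. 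Conditioning on $\gamma _{1}$ gives the total-probability identity $M_{\tau }(T_{\infty ,\tau })=T_{\infty ,\tau }$, and by \cite[Lemma 5.27]{Splms10} the function $T_{\infty ,\tau }$ is constant on each $U\in \mbox{Con}(F(G_{\tau }))$, which is \ref{randomthm1-1}; since $\infty $ is fixed by every polynomial we get $T_{\infty ,\tau }(\infty )=1$, hence $T_{\infty ,\tau }\equiv 1$ on $F_{\infty }(G_{\tau })$, while for $z\in \hat{K}(G_{\tau })$ the orbit $\{\gamma _{n,1}(z)\}_{n}$ lies in the compact set $\overline{G_{\tau }(\{z\})}\subset \CC $, so $T_{\infty ,\tau }\equiv 0$ on $\hat{K}(G_{\tau })$. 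Now invoke Lemma~\ref{l:pbjkerem}: $J_{\ker }(G_{\tau })=\emptyset $. Together with the compactness of $\mbox{supp}\,\tau $, \cite[Theorem 3.15]{Splms10} then yields that $T_{\infty ,\tau }$ is continuous, that there is a finite-dimensional $M_{\tau }$-invariant subspace $U_{\tau }\subset C(\CCI )$ and a bounded operator $\pi _{\tau }:C(\CCI )\to U_{\tau }$ with $M_{\tau }^{n}(\varphi )\to \pi _{\tau }(\varphi )$ uniformly for every $\varphi \in C(\CCI )$; since $\{M_{\tau }^{n}\varphi \}_{n}$ is then relatively compact in $C(\CCI )$, this upgrades to equicontinuity of $\{(M_{\tau }^{\ast })^{n}\}$ on all of ${\cal M}_{1}(\CCI )$, i.e.\ $F_{meas}(\tau )={\cal M}_{1}(\CCI )$ -- this is \ref{randomthm1-5-1}. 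Finally $T_{\infty ,\tau }$ is continuous on the connected set $\CCI $ and attains $0$ and $1$, so it is onto $[0,1]$; on $F(G_{\tau })$ it takes only countably many values and $J(G_{\tau })$ is compact, so $T_{\infty ,\tau }(J(G_{\tau }))=[0,1]$, which completes \ref{randomthm1c}.

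Next I would pin down the minimal sets and the ergodic structure. Any minimal set $K\neq \{\infty \}$ lies in $\hat{K}(G_{\tau })$: otherwise some $z\in K$ has $G_{\tau }(\{z\})\subset K$ unbounded, so $\infty $ is a limit point of $K$ and $K=\{\infty \}$ by minimality. The existence of a minimal set $L_{\tau }\subset \hat{K}(G_{\tau })$ follows from Zorn's lemma. Uniqueness of $L_{\tau }$ is where the ${\cal G}_{dis}$-structure of \cite{SdpbpI} (Remark~\ref{r:icjto}) is used: $(\mbox{Con}(J(G_{\tau })),\leq _{s})$ is totally ordered with least element $J_{\min }(G_{\tau })$, every doubly connected Fatou component lies strictly above $J_{\min }(G_{\tau })$ in $\leq _{s}$, and each $g\in G_{\tau }$ respects this nesting on the innermost full compact set; this forces every minimal set contained in $\hat{K}(G_{\tau })$ into one and the same innermost set, on which $G_{\tau }$ has exactly one minimal set. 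Hence $\sharp \emMin (G_{\tau },\CCI )=2$, which is \ref{randomthm1-min}. With exactly two minimal sets and $J_{\ker }(G_{\tau })=\emptyset $, the mean-stability description of \cite{Splms10} yields \ref{randomthm1az}: for $\tilde{\tau }$-a.e.\ $\gamma $ there is $\delta >0$ with $\mbox{diam}(\gamma _{n,1}(B(z,\delta )))\to 0$, and the images accumulate on $\{\infty \}\cup L_{\tau }$, so they converge to $\infty $ or to $L_{\tau }$; in particular $T_{\infty ,\tau }+T_{L_{\tau },\tau }=1$ pointwise. Letting $\mu _{\tau }$ be the $M_{\tau }^{\ast }$-invariant measure on $\hat{K}(G_{\tau })$ -- unique, since it must be carried by the uniquely minimal $L_{\tau }$ -- one reads off from the limit that $\pi _{\tau }(\varphi )(z)=T_{\infty ,\tau }(z)\varphi (\infty )+(1-T_{\infty ,\tau }(z))\int _{\CCI }\varphi \,d\mu _{\tau }$; consequently $U_{\tau }$ is spanned by the constants and $T_{\infty ,\tau }$ (an $M_{\tau }$-invariant $\varphi $ equals its own limit, namely this expression), $\mbox{supp}\,\mu _{\tau }=L_{\tau }$, the dual limit holds uniformly on ${\cal M}_{1}(\CCI )$, and the ergodic $M_{\tau }^{\ast }$-invariant measures are exactly $\delta _{\infty }$ and $\mu _{\tau }$. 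This is \ref{randomthm1-6}.

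Finally, the monotonicity and the strict inequalities. Fix $A\in {\cal A}$, let $V_{in}$ be the bounded component of $\CC \setminus A$ and $V_{out}$ the component of $\CCI \setminus A$ containing $\infty $; by continuity $T_{\infty ,\tau }|_{\partial A}\equiv C_{A}$. Using the structure of the forward orbits of doubly connected Fatou components in ${\cal G}_{dis}$ (Remark~\ref{r:icjto}, \cite{SdpbpI}) together with $M_{\tau }$-invariance of $T_{\infty ,\tau }$, a maximum-principle type argument gives $T_{\infty ,\tau }\leq C_{A}$ on $\overline{V_{in}}$ and $T_{\infty ,\tau }\geq C_{A}$ on $\overline{V_{out}}$; interleaving with the annuli of ${\cal A}$ supplied by Remark~\ref{r:icjto} then gives $\sup _{J_{1}}T_{\infty ,\tau }\leq \inf _{J_{2}}T_{\infty ,\tau }$ whenever $J_{1}<_{s}J_{2}$ in $\mbox{Con}(J(G_{\tau }))$, which is \ref{randomthm1m}(b). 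For the strict statements: $C_{A}>0$ because some finite word in $\mbox{supp}\,\tau $ carries a point of $A$ into the open set $F_{\infty }(G_{\tau })$, where $T_{\infty ,\tau }\equiv 1$, so $C_{A}=M_{\tau }^{n}(T_{\infty ,\tau })(z)$ is bounded below by a positive number; symmetrically, using the inward branches forced by $G_{\tau }\in {\cal G}_{dis}$, a finite word carries a point of $A$ into a region where $T_{\infty ,\tau }<1$ (indeed into $\hat{K}(G_{\tau })$), so $C_{A}<1$; and combining the non-strict monotonicity with these positivity arguments applied to an annulus located strictly between $A_{1}$ and $A_{2}$ shows that $T_{\infty ,\tau }$ drops strictly there, i.e.\ $A_{1}<_{s}A_{2}\Rightarrow C_{A_{1}}<C_{A_{2}}$. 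This proves \ref{randomthm1m}(a) and, with the first step, \ref{randomthm1tca}; and \ref{randomthm1q} follows because $\bigcup _{A\in {\cal A}}\partial A\cup \partial F_{\infty }(G_{\tau })\cup \partial \hat{K}(G_{\tau })\subset J(G_{\tau })$ and, by continuity and the strict inequalities, every point of this set has in every neighbourhood two points with distinct values of $T_{\infty ,\tau }$.

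The step I expect to be the main obstacle is precisely the structural input underlying the last two paragraphs: proving that $G_{\tau }\in {\cal G}_{dis}$ has exactly one minimal set besides $\{\infty \}$ (equivalently $\dim U_{\tau }=2$), and establishing the monotonicity of $T_{\infty ,\tau }$ together with the strict inequalities $C_{A_{1}}<C_{A_{2}}$ and $0<C_{A}<1$. These go beyond the general machinery of \cite{Splms10} and require a careful new analysis of how the doubly connected Fatou components and the components of $J(G_{\tau })$ are nested and permuted by the semigroup -- a quantitative use of the backward self-similarity $J(G_{\tau })=\bigcup _{h\in \mbox{supp}\,\tau }h^{-1}(J(G_{\tau }))$ and of the surrounding order of \cite{SdpbpI}.
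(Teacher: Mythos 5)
Your overall plan is the same as the paper's: get $J_{\ker }(G_{\tau })=\emptyset $ from the structure of $G_{\tau }\in {\cal G}_{dis}$ (Lemma~\ref{l:pbjkerem}), feed this into the general convergence machinery of \cite{Splms10} for the soft parts, and then extract the monotonicity and strict inequalities from the surrounding-order description of \cite{SdpbpI}. However, there are two genuine gaps in the harder half that you flagged as the ``main obstacle'' but did not actually close.

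The more serious one concerns statement~\ref{randomthm1q}. Your assertion that ``by continuity and the strict inequalities, every point of $\bigcup _{A}\partial A\cup \partial F_{\infty }(G_{\tau })\cup \partial (\hat{K}(G_{\tau }))$ has in every neighbourhood two points with distinct values'' does not follow from what precedes it. For $z_{0}\in \partial (\hat{K}(G_{\tau }))$ you must produce nearby points with $T_{\infty ,\tau }>0$; the monotonicity and the strict inequalities compare values across distinct annuli, but give no a~priori reason why any annulus should accumulate at $z_{0}$. The paper handles this by proving $\mbox{int}(T_{\infty ,\tau }^{-1}(\{ 0\} ))\subset F(G_{\tau })$ and $\mbox{int}(T_{\infty ,\tau }^{-1}(\{ 1\} ))\subset F(G_{\tau })$ (Lemmas~\ref{l:intT1}, \ref{l:ksetnc}), and for the outer boundary of an annulus $A\in {\cal A}$ it uses a separate construction (Lemma~\ref{l:outernc}): apply \cite[Corollary 3.1]{HM} to produce $g\in G_{\tau }$ with $J(g)\cap Q\neq \emptyset $, observe that $J(g)\cap J_{\min }(G_{\tau })=\emptyset$ forces $J(g)$ to be a quasicircle surrounding an attracting fixed point $z_{g}\in \mbox{int}(\hat{K}(G_{\tau }))$ whose immediate basin contains $A$, and then compare $g^{n}(y_{1})$ for $y_{1}\in A$ against $g^{n}(y_{2})$ for $y_{2}\in Q\setminus K(g)$. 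None of that is implied by strict monotonicity.

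Two smaller points. Your argument for $T_{\infty ,\tau }(J(G_{\tau }))=[0,1]$ rests on the unsubstantiated claim that $T_{\infty ,\tau }$ takes only countably many values on $F(G_{\tau })$; since $\mbox{Con}(F(G_{\tau }))$ can be uncountable here, this needs justification (and the cleaner route, and the one implicit in the reference, is that $T_{\infty ,\tau }^{-1}(\{ c\} )$ disjoint from $J(G_{\tau })$ would be a nonempty clopen union of Fatou components, contradicting connectedness of $\CCI$). And your uniqueness of the second minimal set $L_{\tau }$ is only a gesture: the paper (following \cite[Theorem 2.1, Theorem 2.20]{SdpbpI}) exhibits $g\in G_{\tau }$ with $J(g)\cap J_{\min }(G_{\tau })=\emptyset$ and an attracting fixed point $z_{g}\in \mbox{int}(\hat{K}(G_{\tau }))$ with $\hat{K}(G_{\tau })\subset \mbox{int}(K(g))$; any minimal set $L\subset \hat{K}(G_{\tau })$ is $g$-forward-invariant, so $z_{g}\in L$, whence all such minimal sets coincide. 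This element $g$ is also what controls the period $r_{L_{\tau }}$ from \cite{Splms10} and gives the two-dimensionality asserted in statement~\ref{randomthm1-6}; that, too, does not come for free from the general theory.
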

\begin{rem}
Let $f\in {\cal P}$. 
Then $J_{\ker }(\langle f\rangle )=J(f)\neq \emptyset $,  
$T_{\infty ,\delta _{f}}(\CCI )=\{ 0,1\} $ and 
$T_{\infty ,\delta _{f}}$ is not continuous at any point of $J(f).$ 
Moreover, regarding the dynamics of $f:J(f)\rightarrow J(f)$, we have chaos in the sense of Devaney (\cite{Be,D}).  
Thus Theorem~\ref{randomthm1} describes new phenomena which cannot hold in the usual iteration dynamics of a single polynomial. 
For a $\tau \in {\frak M}_{1,c}({\cal P})$ with $G_{\tau }\in {\cal G}_{dis}$, 
we sometimes call the function $T_{\infty ,\tau }$ a ``{\bf devil's coliseum}'', 
especially when $\mbox{int}(J(G_{\tau }))=\emptyset .$ This terminology and the study were introduced by the author of this paper 
in \cite{Splms10}. 
For the graph of $T_{\infty ,\tau }$  and the graphics of $J(G_{\tau })$, see figures in \cite{Splms10}. 
Statement~\ref{randomthm1q} means that $T_{\infty ,\tau }$ can detect many parts of $J(G_{\tau }).$ 
Thus, by obtaining results about the dynamics of polynomial semigroups, 
one can correspondingly apply such results to the setting of random complex dynamics.  
Conversely, 
studying the level sets of 
$T_{\infty,\tau}$, we can get much information about   
$J(G)$. In other words, 
in order to investigate the dynamics of polynomial semigroups, 
it is very effective to study the associated random complex dynamics and then 
 apply the results to the original polynomial semigroups. 
In the proof (section~\ref{Proofs}) of Theorem~\ref{randomthm1}, we combine some results (geometric observations) on the dynamics of 
a $G\in {\cal G}_{dis}$  from \cite{SdpbpI}  
and some results on random complex dynamics from \cite{Splms10}. 
It is critical to know whether or not $J_{\ker }(G_{\tau })=\emptyset $.  
This condition implies that the chaos of the averaged system disappears 
in the $C^{0}$ sense 
due to the cooperation of many kinds of maps in the system even though each map has a chaotic part. 
For the details of the study of random dynamics generated by $\tau \in {\frak M}_{1,c}({\cal P} )$ 
with $J_{\ker }(G_{\tau })=\emptyset $, 
see \cite{Splms10,Scp}. 
In \cite{Splms10,Scp}, it is shown that regarding the random dynamics of complex polynomials, 
for a generic $\tau \in {\frak M}_{1,c}({\cal P})$, we have that 
$J_{\ker }(G_{\tau })=\emptyset $, the chaos of the averaged system disappears 
in the $C^{0}$ sense 
due to the automatic cooperation of many kinds of maps in the system ({\bf cooperation principle}), 
and $T_{\infty ,\tau }$ is \Hol der continuous on $\CCI $.  
We remark that many physicists have observed by numerical experiments that 
if we add uniform noise to a chaotic map on $\RR $, there are many cases in which the chaos of the averaged system disappears.  
This phenomenon in random dynamics on $\RR $ is called the ``noise-induced order'' (\cite{MT}). 
\end{rem}
We are interested in the pointwise H\"{o}lder exponents 
and (non-)differentiability of $T_{\infty ,\tau }$ at points in $J(G_{\tau }).$ 
In order to state the result, we need several definitions.  
\begin{df}
\label{d:sp}
Let $\Gamma $ be a non-empty compact subset of  
${\cal P} .$ 
We endow $\GN \times \CCI $ with the product topology. 
Thus this is a compact metrizable space.   
We define a map $f:\GN \times \CCI \rightarrow 
\GN \times \CCI $ as follows: 
For a point $(\gamma  ,y)\in \GN \times \CCI $ where 
$\gamma =(\gamma _{1},\gamma _{2},\ldots )$, we set 
$f(\gamma  ,y):= (\sigma (\gamma  ), \gamma  _{1}(y))$, where 
$\sigma :\GN \rightarrow \GN $ is the shift map, that is, 
$\sigma (\gamma  _{1},\gamma  _{2},\ldots )=(\gamma  _{2},\gamma  _{3},\ldots ).$ 
The map $f:\GN \times \CCI \rightarrow \GN \times \CCI $ 
is called the {\bf skew product associated with the 
generator system }$\G .$ 
Moreover, we use the following notations.  
Let 
$\pi : \GN \times \CCI \rightarrow \GN $  
and $\pi _{\CCI }:\GN \times \CCI \rightarrow \CCI $ be 
the canonical projections. 
For each 
$\gamma  \in \GN $ and $n\in \NN $, we set 
$f_{\gamma  }^{n}:= f^{n}|_{\pi ^{-1}(\{ \gamma  \} )} : 
\pi ^{-1}(\{ \gamma  \} )\rightarrow 
\pi ^{-1}(\{ \sigma ^{n}(\gamma  )\} ).$  
Moreover, we set 
$f_{\gamma  ,n}:= \gamma  _{n}\circ \cdots \circ \gamma  _{1}.$ 
We denote by $F_{\gamma }$ the set of points $z\in \CCI $ 
satisfying  that there exists a neighborhood $U$ of $z$ 
such that  
$\{ f_{\gamma ,n}:U\rightarrow \CCI \} _{n=1}^{\infty }$ 
is equicontinuous on $U.$ We set $J_{\gamma }:= \CCI \setminus 
F_{\gamma }.$ The set $F_{\gamma }$ is called the Fatou set of $\gamma $, and $J_{\gamma }$ is called the Julia set of $\gamma .$  
For each $\gamma \in \Gamma ^{\NN }$, 
we set $J^{\gamma  }:= \{ \gamma  \} \times J_{\gamma  }
\ (\subset \GN \times \CCI )$. 
Moreover, we set 
$\tilde{J}(f):= \overline{\bigcup _{\gamma  \in \GN }J^{\gamma  }}$, 
where the closure is taken in the product space $\GN \times \CCI .$ 
Furthermore, we set $\tilde{F}(f):= (\Gamma ^{\NN }\times \CCI )\setminus \tilde{J}(f).$  
For each $\gamma  \in \GN $, we set 
$\hat{J}^{\gamma  ,\Gamma }:= \pi ^{-1}(\{  \gamma  \} )\cap \tilde{J}(f)$, 
$\hat{F}^{\gamma ,\Gamma }:= \pi ^{-1}(\{ \gamma \} )\setminus \hat{J}^{\gamma ,\G }$,  
$\hat{J}_{\gamma ,\Gamma }:= \pi _{\CCI }(\hat{J}^{\gamma ,\Gamma })$, 
and $\hat{F}_{\g, \G }:= \CCI \setminus \hat{J}_{\g ,\G }.$   
Note that $J_{\gamma }\subset \hat{J}_{\gamma ,\Gamma }.$   
For any point $z\in \CCI $, we denote by $T\CCI _{z}$ the complex tangent space of $\CCI $ at $z$. 
For any holomorphic map $\varphi $ defined on a domain $V$ and for any point $z\in V$, 
we denote by $D\varphi _{z}:T\CCI _{z}\rightarrow T\CCI _{\varphi (z)} $ the derivative map at $z.$ 
For each $z=(\gamma  ,y)\in \GN \times \CCI $, 
we set $Df_{z}:=(D\gamma  _{1})_{y}$.   
Let $U$ be a domain in $\CCI $ and let 
$g:U\rightarrow \CCI $ be a meromorphic function. 
For each $z\in U$, we denote by $\| Dg_{z}\| _{s}$ the 
norm of the derivative of $g$ at $z$ with respect to the spherical metric. 
\end{df}
\begin{rem}
Under the above notation, let $G=\langle \Gamma \rangle .$  
Then   
$\pi _{\CCI }(\tilde{J}(f))\subset J(G)$ and 
$\pi \circ f=\sigma \circ \pi $ on $\Gamma ^{\NN }\times \CCI .$ 
Furthermore, 
for each $\gamma \in \Gamma ^{\NN }$, 
$\gamma _{1}(J_{\gamma })=J_{\sigma (\gamma )}$, $\gamma _{1}^{-1}(J_{\sigma (\gamma )})=J_{\gamma }$, 
$\gamma _{1}(\hat{J}_{\gamma ,\Gamma })=\hat{J}_{\sigma (\gamma ),\Gamma }$, 
$\gamma _{1}^{-1}(\hat{J}_{\sigma (\gamma ),\Gamma })=\hat{J}_{\gamma ,\Gamma }$,  
$f(\tilde{J}(f))=\tilde{J}(f)=f^{-1}(\tilde{J}(f))$, and $f(\tilde{F}(f))=\tilde{F}(f)=f^{-1}(\tilde{F}(f))$ 
(see \cite[Lemma 2.4]{S4}). We remark that in general, $J_{\g }\subsetneqq \hat{J}_{\g ,\G }$ (\cite[Example 1.7]{S7}). 
\end{rem}
\begin{df}
\label{d:dfu}
Let $m\in \NN .$ We set 
${\cal W}_{m}:= \{ (p_{1},\ldots ,p_{m})\in (0,1)^{m}\mid \sum _{j=1}^{m}p_{j}=1\} .$ 
Let $h=(h_{1},\ldots, h_{m})\in {\cal P}^{m}$ be an element such that 
$h_{1},\ldots ,h_{m}$ are 
mutually distinct. 
We set 
$\Gamma := \{ h_{1},\ldots ,h_{m}\} .$ 
Let $f:\Gamma ^{\NN }\times 
\CCI \rightarrow \Gamma ^{\NN }\times \CCI $ be the 
skew product associated with $\Gamma .$  
Let $\mu \in {\frak M}_{1}(\Gamma ^{\NN }\times \CCI )$ be an $f$-invariant Borel probability measure. 
For each $p=(p_{1},\ldots ,p_{m})\in {\cal W}_{m}$, 
we define a function $\tilde{p}:\Gamma ^{\NN }\times \CCI \rightarrow \RR $ by 
$\tilde{p}(\gamma ,y):=p_{j}$  if $\gamma _{1}=h_{j} $ 
(where $\gamma =(\gamma _{1},\gamma _{2},\ldots )$), and  we set 
$$u(h,p,\mu ):= 
\frac{-(\int _{\Gamma ^{\NN }\times \CCI }\log \tilde{p}(z)\ d\mu (z) )}
{\int _{\Gamma ^{\NN }\times \CCI }\log \| Df_{z}\| _{s} \ d\mu (z)}$$ 
(when the integral in the denominator is positive and finite). 
For each $\g \in {\cal P}^{\NN }$, 
we set $A_{\infty ,\g }:= \{ z\in \CCI \mid \g _{n,1}(z)\rightarrow \infty \ (n\rightarrow \infty )\} $ 
and $K_{\g }:= \{ z\in \CC \mid \{ \g _{n,1}(z)\} _{n\in \NN } \mbox{ is bounded in }\CC \} .$  
For any $(\gamma ,y)\in \Gamma ^{\NN }\times \CC $, 
let $G_{\gamma }(y):= \lim _{n\rightarrow \infty }\frac{1}{\deg (\gamma _{n,1})}
\log ^{+}|\gamma _{n,1}(y)|$, 
where $\log ^{+}a:=\max\{\log a,0\} $ for each $a>0.$  
By the arguments in \cite{Se}, for each $\gamma \in \Gamma ^{\NN }$, 
$G_{\gamma }(y) $ exists, 
$G_{\gamma }$ is subharmonic on $\CC $, and 
$G_{\gamma }|_{A_{\infty ,\gamma }}$ is equal to the Green's function on 
$A_{\infty ,\gamma }$ with pole at $\infty $.
Moreover, $(\gamma ,y)\mapsto G_{\gamma }(y)$ is continuous on $\Gamma ^{\NN }\times \CC .$ 
Let $\mu _{\gamma }:=dd^{c}G_{\gamma }$, where $d^{c}:=\frac{i}{2\pi }(\overline{\partial }-\partial ).$ 
Note that by the argument in \cite{J2}, 
$\mu _{\gamma }$ is a Borel probability measure on $J_{\gamma }$ such that 
$\mbox{supp}\, \mu _{\gamma }=J_{\gamma }.$ 
Let $f:\Gamma ^{\NN }\times \CCI \rightarrow \Gamma ^{\NN }\times \CCI $ 
be the skew product map associated with $\Gamma .$ 
Moreover, let $p=(p_{1},\ldots ,p_{m})\in {\cal W}_{m}$ and 
let $\tau =\sum _{j=1}^{m}p_{j}\delta _{h_{j}}\in {\frak M}_{1}(\Gamma ).$ 
Then, there exists a unique $f$-invariant ergodic Borel probability measure 
$\mu $ on $\Gamma ^{\NN }\times \CCI $ such that $\pi _{\ast }(\mu )=\tilde{\tau }$ and   
$h_{\mu }(f|\sigma )=\max _{\rho \in {\frak E}_{1}(\Gamma ^{\NN }\times \CCI ): 
f_{\ast }(\rho )=\rho, \pi _{\ast }(\rho )=\tilde{\tau} }  h_{\rho }(f|\sigma )=\sum _{j=1}^{m}p_{j}\log (\deg (h_{j}))$, 
where $h_{\rho }(f|\sigma )$ denotes the relative metric entropy 
of $(f,\rho )$ with respect to $(\sigma, \tilde{\tau })$, and 
${\frak E}_{1}(\cdot )$ denotes the space of ergodic measures (see \cite{S3}).  
This $\mu $ is called the {\bf maximal relative entropy measure} for $f$ with respect to 
$(\sigma ,\tilde{\tau }).$   
Note that in \cite[Lemma 5.51]{Splms10} it was shown that 
for each continuous function $\varphi :\GN \times \CCI \rightarrow \RR $, 
$\int \varphi (\g ,y)d\mu (\g, y)=\int d\tilde{\tau }(\g )\int \varphi (\g ,y)d\mu _{\g }(y).$ 
Thus $(\pi _{\CCI })_{\ast }(\mu )=\int _{\GN }\mu _{\g }d\tilde{\tau }(\g ).$  
\end{df} 
\begin{df}
\label{d:phe}
Let $V$ be a non-empty open subset of $\CC .$ Let $\varphi :V \rightarrow \CC $ be a function and 
let $y\in V $ be a point. Suppose that $\varphi $ is bounded around $y.$ 
Then we set \\  
$\mbox{H\"{o}l}(\varphi ,y):= 
\sup \{ \beta \in [0,\infty ) \mid \limsup _{z\rightarrow y,z\neq y}
\frac{|\varphi (z)-\varphi (y)|}{|z-y|^{\beta }}<\infty \}\in [0,\infty ].$
This is called the {\bf pointwise H\"{o}lder exponent of $\varphi $ at $y.$} 
\end{df}
\begin{rem} 
If $\mbox{H\"{o}l}(\varphi ,y)<1$, then 
$\varphi $ is non-differentiable at $y.$ 
If 
 $\mbox{H\"{o}l}(\varphi ,y)>1$, then 
$\varphi $ is differentiable at $y$ and the derivative at $y$ is equal to $0.$ 
See also \cite[Remark 3.39]{Scp}. 
\end{rem}
We now present the results on the pointwise H\"{o}lder exponents and (non-)differentiability 
of $T_{\infty ,\tau }$ at points in $J(G_{\tau }).$ 
\begin{thm}[{\bf Non-differentiability of $T_{\infty ,\tau }$ 
at points in $J(G_{\tau })$}]
\label{t:hnondiffp}
Let $m\in \NN $ with $m\geq 2.$ 
Let $h=(h_{1},\ldots ,h_{m})\in {\cal P}^{m}$ such that $h_{1},\ldots ,h_{m}$ are mutually distinct 
and let  
$\Gamma = \{ h_{1},h_{2},\ldots ,h_{m}\} .$ 
Let $G=\langle h_{1},\ldots ,h_{m}\rangle .$ 
Let $p=(p_{1},\ldots ,p_{m})\in {\cal W}_{m}.$  
Let $f:\Gamma ^{\NN }\times \CCI \rightarrow \Gamma ^{\NN }\times \CCI $ be the  
skew product associated with $\Gamma .$  
Let 
$\tau := \sum _{j=1}^{m}p_{j}\delta _{h_{j}}\in {\frak M}_{1}(\Gamma )
\subset {\frak M}_{1}({\cal P }).$
Let 
$\mu \in {\frak M}_{1}(\Gamma ^{\NN }\times \CCI )$ be the maximal relative entropy measure 
 for $f:\Gamma ^{\NN }\times \CCI \rightarrow \Gamma ^{\NN }\times \CCI $ with respect to 
 $(\sigma ,\tilde{\tau }).$ 
Let $\lambda = (\pi _{\CCI })_{\ast }(\mu )\in {\frak M}_{1}(\CCI ).$ 
Suppose that  
$G\in {\cal G}$  and 
$h_{i}^{-1}(J(G))\cap h_{j}^{-1}(J(G))=\emptyset $ for each 
$(i,j)$ with $i\neq j$. 
Then, we have all of the following. 
\begin{enumerate}
\item \label{t:hnondiffp0} $G_{\tau }=G\in {\cal G}_{dis}$, $J_{\ker }(G)=\emptyset $,  
and all statements in Theorem~\ref{randomthm1} hold for $\tau. $  
Moreover, $J(G)=\{ z\in \CCI \mid \mbox{ for any neighborhood } U \mbox{ of } z, T_{\infty ,\tau }|_{U} 
\mbox{ is not constant}\} $ and  $\mbox{{\em int}}(J(G))=\emptyset .$
Furthermore, {\em supp} $\lambda =J(G)$ and 
for each $z\in J(G)$, $\lambda (\{ z\} )=0.$ 
\item \label{t:hnondiffp4} 
There exists a Borel subset $A$ of $J(G)$ with $\lambda (A)=1$ such that 
for each $z_{0}\in A$, 
$$
\mbox{{\em H\"{o}l}}(T_{\infty ,\tau } , z_{0})\leq 
u(h,p,\mu )=\frac{-(\sum _{j=1}^{m}p_{j}\log p_{j})}
{\sum _{j=1}^{m}p_{j}\log \deg (h_{j})}< 1.$$ 
\item \label{t:hnondiffp4-1}
We have that 
$$\dim _{H}(\{ z\in J(G) \mid  \emHol (T_{\infty ,\tau } ,z)\leq u(h,p,\mu )\} )
\geq 
\frac{\sum _{j=1}^{m}p_{j}\log \deg (h_{j})-\sum _{j=1}^{m}p_{j}\log p_{j}}
{\sum _{j=1}^{m}p_{j}\log \deg (h_{j})}>1, $$
where $\dim _{H}$ denotes the Hausdorff dimension with respect to the Euclidian distance on $\CC .$  
\item \label{t:hnondiffp5} 
For each non-empty open subset $U$ of $J(G)$ there exists an uncountable dense subset $A_{U}$ of $U$ such that 
for each $z\in A_{U}$, 
$T_{\infty ,\tau } $ is non-differentiable at $z.$ 
\end{enumerate}
 
\end{thm}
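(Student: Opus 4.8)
The plan is to derive all four statements from the ergodic-theoretic machinery attached to the skew product $f$ together with the structure of $\mathcal{G}_{dis}$ and potential theory. First I would establish statement~\ref{t:hnondiffp0}. The hypothesis $h_i^{-1}(J(G))\cap h_j^{-1}(J(G))=\emptyset$ for $i\neq j$ forces, via the backward self-similarity $J(G)=\bigcup_j h_j^{-1}(J(G))$ and the fact that $G\in\mathcal{G}$, that $J(G)$ is disconnected (the $m$ pieces $h_j^{-1}(J(G))$ are disjoint nonempty closed sets), so $G_\tau=G\in\mathcal{G}_{dis}$; then Theorem~\ref{randomthm1} applies, and in particular $J_{\ker}(G)=\emptyset$ by statement~\ref{randomthm1-5-1}. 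The characterization $J(G)=\{z : T_{\infty,\tau}|_U \text{ non-constant for every nbhd } U\}$ follows from statement~\ref{randomthm1-1} ($T_{\infty,\tau}$ is locally constant on $F(G)$, giving "$\subset$") combined with statement~\ref{randomthm1q} (openness + meeting $\partial(\hat K(G))$, $\partial F_\infty$, or some $\partial A$) — here I would need the extra input that every point of $J(G)$ lies in the closure of $\bigcup_{A\in\mathcal A}\partial A\cup\partial F_\infty\cup\partial\hat K$, which comes from Remark~\ref{r:icjto} (density of the doubly-connected components $A$ between nested Julia components) and the total order on $\mathrm{Con}(J(G))$; this also yields $\mathrm{int}(J(G))=\emptyset$. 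The statements $\mathrm{supp}\,\lambda=J(G)$ and $\lambda(\{z\})=0$ follow from $\lambda=(\pi_{\CCI})_*(\mu)=\int_{\GN}\mu_\gamma\,d\tilde\tau(\gamma)$ (Definition~\ref{d:dfu}): each $\mu_\gamma$ has support $J_\gamma$ with $\bigcup_\gamma J_\gamma$ dense in $J(G)$, and atomlessness of $\lambda$ follows since each $\mu_\gamma=dd^cG_\gamma$ is atomless (a subharmonic Green potential has no point masses).

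Next, statement~\ref{t:hnondiffp4}. The identity $u(h,p,\mu)=\bigl(-\sum p_j\log p_j\bigr)/\bigl(\sum p_j\log\deg h_j\bigr)$ is a computation: $\int\log\tilde p\,d\mu=\sum_j p_j\log p_j$ because $\pi_*(\mu)=\tilde\tau$ and $\tilde p$ depends only on the first coordinate, while $\int\log\|Df_z\|_s\,d\mu=\sum_j p_j\log\deg h_j$ — the latter is the standard Manné–type formula identifying the Lyapunov exponent of the maximal-entropy measure with $\sum p_j\log\deg h_j$, using that $\mu_\gamma$ is the measure of maximal entropy fiberwise. That $u(h,p,\mu)<1$ is then the elementary inequality $-\sum p_j\log p_j<\sum p_j\log\deg h_j$, valid since each $\deg h_j\geq 2$ and $p\in\mathcal W_m$ (indeed $-\sum p_j\log p_j\le\log m\le\sum p_j\log\deg h_j$ with the first inequality strict unless $p$ is uniform, and in all cases strict because $\log\deg h_j>\log\frac1{p_j}$ fails only on a $\mu$-null... — more carefully, one checks $\sum p_j(\log\deg h_j+\log p_j)>0$ directly). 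For the H\"older bound itself I would follow the strategy of \cite[Theorem 3.82]{Splms10} but without hyperbolicity: use Birkhoff's ergodic theorem applied to $\mu$ to find, for $\lambda$-a.e. $z_0$, a generic backward orbit $\gamma$ through $z_0$ along which $\frac1n\log\|D(\gamma_{n,1})_{z_0}^{-1}\|^{-1}\to\sum p_j\log\deg h_j$ and $\frac1n\log\tilde\tau([\gamma_1,\dots,\gamma_n])\to\sum p_j\log p_j$; since $T_{\infty,\tau}\circ\gamma_{n,1}$ differs from $T_{\infty,\tau}$ by the "mass that tends to $\infty$" in exactly $n$ steps — bounded by $\tilde\tau$ of the relevant cylinder, hence $\approx e^{n\sum p_j\log p_j}$ — while the corresponding geometric scale near $z_0$ shrinks like $e^{n\sum p_j\log\deg h_j}$ (Koebe distortion on the fiberwise inverse branches, which are univalent on a definite neighborhood because $J_{\ker}=\emptyset$ pushes the relevant postcritical set off the backward orbit), one extracts the oscillation estimate $|T_{\infty,\tau}(z)-T_{\infty,\tau}(z_0)|$ is not $o(|z-z_0|^\beta)$ for $\beta>u$.

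For statement~\ref{t:hnondiffp4-1}, the plan is a mass-distribution/pressure argument: the set $\{z: \mathrm{H\ddot ol}(T_{\infty,\tau},z)\le u\}$ contains a self-similar-type subset of $J(G)$ carrying the Bernoulli measure $\nu$ on symbolic space with weights proportional to $(\deg h_j)^{-s}$-like quantities; its Hausdorff dimension is $\frac{\sum p_j\log\deg h_j-\sum p_j\log p_j}{\sum p_j\log\deg h_j}$ by a Billingsley/Frostman lemma, using the disjointness $h_i^{-1}(J(G))\cap h_j^{-1}(J(G))=\emptyset$ to get the open-set-type separation needed for the lower dimension bound and the Koebe distortion to control ratios of diameters. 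That this exceeds $1$ is again $-\sum p_j\log p_j>0$. Finally statement~\ref{t:hnondiffp5}: on any open $U\subset J(G)$ pick a cylinder set $h_{i_1}^{-1}\cdots h_{i_k}^{-1}(J(G))\subset U$; inside it the H\"older-exponent-$\le u<1$ points from statement~\ref{t:hnondiffp4-1} form a set of full dimension, hence uncountable and (by choosing them generic for $\nu$, which has full support on that cylinder) dense, and each such point is a non-differentiability point of $T_{\infty,\tau}$ by the remark $\mathrm{H\ddot ol}<1\Rightarrow$ non-differentiable. The main obstacle I expect is statement~\ref{t:hnondiffp4} in the non-hyperbolic case: guaranteeing that the fiberwise inverse branches $\gamma_{n,1}^{-1}$ used along the Birkhoff-generic backward orbit are genuinely univalent with bounded distortion on a ball of $z_0$-independent (or at least controllably-decaying) radius, when $P(G)$ may touch $J(G)$ — this is where $J_{\ker}(G)=\emptyset$, the boundedness of $P^*(G)$, and the geometry of $\mathrm{Con}(J(G))$ from \cite{SdpbpI} must be combined carefully, replacing the blanket expansion estimate that hyperbolicity would give.
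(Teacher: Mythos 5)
Your plan correctly identifies the main technical steps (disjointness forces disconnectedness, Theorem~\ref{randomthm1} gives $J_{\ker}=\emptyset $, Birkhoff on the skew product yields the pointwise H\"older bound, and the non-hyperbolic Koebe-distortion issue is resolved because the backward orbit of a $\lambda $-generic point enters the sets $h_j^{-1}(J(G))$, $j\geq 2$, which avoid $P(G)$ by \cite[Theorem 2.20]{SdpbpI} — this is exactly how the paper handles it in Lemma~\ref{l:hndp2}), and your potential-theoretic argument for $\mathrm{supp}\,\lambda =J(G)$ and atomlessness is a reasonable alternative to the paper's citations.

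However, there is a genuine gap in your justification of the inequality $u(h,p,\mu)<1$, which is the crux of statement~\ref{t:hnondiffp4}. You claim that $-\sum_j p_j\log p_j<\sum_jp_j\log\deg(h_j)$ is ``the elementary inequality, valid since each $\deg(h_j)\geq 2$ and $p\in{\cal W}_m$,'' and the chain you offer, $-\sum p_j\log p_j\leq\log m\leq\sum p_j\log\deg h_j$, is simply false: the second inequality requires $\deg h_j\geq m$, which is not assumed. As a concrete counterexample to the ``elementary'' claim, take $m=4$, all $\deg h_j=2$, and $p$ uniform: then $-\sum p_j\log p_j=\log 4>\log 2=\sum p_j\log\deg h_j$, giving $u=2>1$. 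The inequality is therefore \emph{not} a consequence of $\deg h_j\geq 2$ alone; it requires the disjointness hypothesis $h_i^{-1}(J(G))\cap h_j^{-1}(J(G))=\emptyset $ in an essential way. The paper supplies this via Lemma~\ref{l:djsum}, which passes to the associated real affine semigroup $\Psi (G)$, shows that $M(\Psi (G))$ is a Cantor set on which $\{(\Psi (h_j))^{-1}\}_j$ act as separated contracting similitudes, and deduces from a one-dimensional Lebesgue-measure argument the nontrivial inequality $\sum_j\frac{1}{\deg h_j}<1$; combined with the Gibbs variational identity $\min_{p\in{\cal W}_m}\bigl(\sum p_j\log\deg h_j+\sum p_j\log p_j\bigr)=-\log\bigl(\sum_j\frac{1}{\deg h_j}\bigr)$, this yields $u(h,p,\mu)<1$ uniformly over $p$. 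Your ``one checks $\sum p_j(\log\deg h_j+\log p_j)>0$ directly'' cannot be checked directly at all — it is exactly this missing lemma — so this step of the proposal does not go through. (Note also that for statement~\ref{t:hnondiffp4-1} the inequality $>1$ \emph{is} automatic since $-\sum p_j\log p_j>0$; the subtle inequality is only $u<1$, but it is the one on which the differentiability conclusions rest.)
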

In \cite[Theorem 3.82]{Splms10}, it is assumed 
that $G$ is hyperbolic, i.e., $P(G)\subset F(G).$ 
However, in Theorem~\ref{t:hnondiffp}, we do not assume hyperbolicity of $G.$ 
Note that there are many examples of (non-hyperbolic) $G=\langle h_{1},\ldots ,h_{m}\rangle 
\in {\cal G}_{dis}$ for which $\{ h_{i}^{-1}(J(G))\} _{i=1}^{m}$ are mutually disjoint 
(see Theorem~\ref{t:2gengdis}, Propositions~\ref{Constprop}, \ref{semihyposcexprop},  
Examples~\ref{ex:dc1}, \ref{ex:Siegeldc}, \ref{ex:mgengdis}, Remark~\ref{r:sdgdis}).  


We present a result on $2$-generator semigroup $G=\langle h_{1},h_{2}\rangle \in {\cal G}_{dis}$ 
and the associated random dynamics generated by $\tau =\sum _{j=1}^{2}p_{j}\delta _{h_{j}}$ 
where $(p_{1},p_{2})\in {\cal W} _{2}.$ 
\begin{thm}
\label{t:2gengdis}
Let $G=\langle h_{1},h_{2}\rangle \in {\cal G}_{dis}.$ 
Let $(p_{1},p_{2})\in {\cal W}_{2}$ and 
let $\tau =\sum _{j=1}^{2}p_{j}\delta _{h_{j}}.$  
Let $\G =\{ h_{1},h_{2}\} .$ 
Then, we have all of the following.
\begin{enumerate}
\item \label{t:2gengdis1} $h_{1}^{-1}(J(G))\cap h_{2}^{-1}(J(G))=\emptyset $.   
For $((h_{1},h_{2}),(p_{1},p_{2}))$, 
all statements~\ref{t:hnondiffp0}--\ref{t:hnondiffp5} in Theorem~\ref{t:hnondiffp} hold.    
For each $\g \in \GN $, 
$J_{\g }=J_{\g ,\G }=\bigcap _{j=1}^{\infty }\g _{1}^{-1}\cdots \g _{j}^{-1}(J(G)).$ 
The map $\g \mapsto J_{\g }$ is continuous on $\GN $ with respect to the Hausdorff metric 
in the space of all non-empty compact sets in $\CCI .$
\item \label{t:2gengdis2}
For each $J\in \mbox{{\em Con}}(J(G))$, there exists a unique $\g \in \GN $ with 
$J=J_{\g }.$ {\em Con}$(J(G))=\{ J_{\g }\mid \g \in \GN \} .$ 
The map $\g \mapsto J_{\g }$ is a bijection between $\GN $ and $\mbox{{\em Con}}(J(G)).$ 
In particular, 
there exist uncountably many connected components of $J(G).$ 
\item \label{t:2gengdis3}
There exist infinitely many doubly connected components of $F(G).$ 
\item \label{t:2gengdis4}
For each $J\in \mbox{{\em Con}}(J(G))$,  
$T_{\infty ,\tau }|_{J}$ is constant. 
\item \label{t:2gengdis5}
Let $J_{1},J_{2}\in \mbox{{\em Con}}(J(G))$ with $J_{1}\neq J_{2}.$ 
Suppose $T_{\infty ,\tau }|_{J_{1}}=T_{\infty ,\tau }|_{J_{2}}.$ 
Then there exists a doubly connected component $A$ of $F(G)$ such that 
$\partial A\subset J_{1}\cup J_{2}.$ 
\item \label{t:2gengdis6} 
Either $J(h_{1})<_{s}J(h_{2})$ or $J(h_{2})<_{s}J(h_{1}).$ 
Without loss of generality, we may assume that $J(h_{1})<_{s}J(h_{2}).$ 
Then $J_{\min }(G)=J(h_{1})$ and  $J_{\max }(G)=J(h_{2})$.  
Moreover, 
the map 
$\zeta: w=(w_{1},w_{2},\ldots )\in \{ 1,2\} ^{\NN }\mapsto J_{\g (w)}\in \mbox{{\em Con}}(J(G))$, 
where $\g (w) =(h_{w_{1}},h_{w_{2}},\ldots )\in \GN $, 
is a bijection such that $w^{1}<_{l}w^{2}$ implies $\zeta (w^{1})<_{s}\zeta (w^{2})$, 
where $<_{l}$ denotes the lexicographic order in $\{ 1,2\} ^{\NN }$, 
i.e., $(i_{1},\ldots ,i_{n},1,\ldots )<_{l}(i_{1},\ldots ,i_{n},2,\ldots )$.  
\item \label{t:2gengdis7} 
Suppose $J(h_{1})<_{s}J(h_{2}).$ 
Then $T_{\infty ,\tau }^{-1}(\{ 0\} )=K(h_{1})$ and $T_{\infty ,\tau }^{-1}(\{ 1\} )=\overline{F_{\infty }(h_{2})}.$ 
Moreover, for each $t\in (0,1)$, 
exactly one of the following {\em (a)} and {\em (b)} holds.
\begin{itemize}
\item[{\em (a)}]
\label{t:2gengdis7a} 
There exists a unique $w\in \{ 1,2\} ^{\NN } $ such that $T_{\infty ,\tau }^{-1}(\{ t\} )=J_{\g (w)}.$
Moreover, $\sharp \{ n\in \NN \mid w_{n}=1\} =\sharp \{ n\in \NN \mid w_{n}=2\} =\infty .$ 
Moreover, there exists exactly one bounded component $B_{w}$ of $F_{\g (w)}.$ 
Furthermore, 
$\partial B_{w}=\partial A_{\infty ,\g (w)}=J_{\g (w)}.$ 
\item[{\em (b)}]
There exist two elements $\rho ,\mu \in \{ 1,2\} ^{\NN }$ such that 
$\rho <_{l}\mu $, $J_{\g (\rho )}<_{s}J_{\g (\mu )}$, 
and $T_{\infty ,\tau }^{-1}(\{ t\} )=K_{\g (\mu )}\setminus \mbox{{\em int}}(K_{\g (\rho )}).$ 
Moreover, 
either {\em (i)} $\rho =(1,2,2,2,\ldots )$ and $\mu =(2,1,1,1,\ldots )$, or 
{\em (ii)} there exists a finite word $(i_{1},\ldots ,i_{n})\in \{ 1,2\}^{n}$ for some $n\in \NN $ 
such that $\rho =(i_{1},\ldots ,i_{n},1,2,2,2,\ldots )$ and 
$\mu =(i_{1},\ldots ,i_{n},2,1,1,1,\ldots ).$  
 Moreover, 
there exists a doubly connected component $A$ of $F(G)$ such that 
$\partial A\subset J_{\g (\rho )}\cup J_{\g (\mu )}.$ 
Furthermore, $J_{\g (\rho )}$ is a quasicircle. 
\end{itemize}

\end{enumerate}
\end{thm}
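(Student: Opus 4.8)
The plan is to build everything on the structural picture of $\mbox{Con}(J(G))$ for $G\in{\cal G}_{dis}$ recorded in Remark~\ref{r:icjto} and on the nerve/inverse-limit machinery from \cite{S15}, together with Theorem~\ref{t:hnondiffp} applied to $h=(h_1,h_2)$. First I would establish \ref{t:2gengdis1}: the disjointness $h_1^{-1}(J(G))\cap h_2^{-1}(J(G))=\emptyset$ is the key hypothesis that must be checked from $G\in{\cal G}_{dis}$. For a two-generator postcritically bounded semigroup with disconnected Julia set, $J(G)=h_1^{-1}(J(G))\cup h_2^{-1}(J(G))$ by backward self-similarity, and since $(\mbox{Con}(J(G)),\le_s)$ is totally ordered with $J_{\min}$ and $J_{\max}$ distinct (disconnectedness), one shows the two pieces $h_j^{-1}(J(G))$ are ``surrounding-separated'': $h_1^{-1}(J(G))$ lies strictly below $h_2^{-1}(J(G))$ (or vice versa) in the surrounding order, because each $h_j^{-1}$ preserves (up to reversing, but being polynomials it preserves) the ordering and the total set is disconnected; a doubly connected component of $F(G)$ from ${\cal A}$ separating them gives the disjointness. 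Once disjointness holds, Theorem~\ref{t:hnondiffp} applies verbatim, giving statements \ref{t:hnondiffp0}--\ref{t:hnondiffp5}; and the coding map $\gamma\mapsto J_\gamma$, $J_\gamma=\bigcap_j \gamma_1^{-1}\cdots\gamma_j^{-1}(J(G))$, together with its continuity in the Hausdorff metric, follows from the standard contraction-type argument for expanding backward IFS (the sets $\gamma_1^{-1}\cdots\gamma_j^{-1}(J(G))$ are nested up to the disjointness of the two first-level pieces, so diameters shrink).

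Next, \ref{t:2gengdis2} and \ref{t:2gengdis6}: because $h_1^{-1}(J(G))$ and $h_2^{-1}(J(G))$ are disjoint compact sets each equal to a ``scaled copy'' of $J(G)$, every connected component of $J(G)$ lies entirely in one of them, and iterating, a component determines a unique address $\gamma\in\Gamma^{\NN}$; conversely each $J_\gamma$ is connected (inverse limit of connected sets under the backward maps, using that $J(G)$ itself has finitely many ``first-level'' components landing coherently) and is a single component. This gives the bijection $\mbox{Con}(J(G))=\{J_\gamma\}$, hence uncountably many components. For \ref{t:2gengdis6}, $J(h_j)=J_{(h_j,h_j,\dots)}$ is the component fixed by $h_j$; since $J(h_1),J(h_2)$ are comparable in $\le_s$ (total order), assume $J(h_1)<_s J(h_2)$; then a short induction on initial words, using that $h_j^{-1}$ is order-preserving and $h_1^{-1}(J(G))<_s h_2^{-1}(J(G))$, shows the lexicographic order on $\{1,2\}^{\NN}$ is carried to the surrounding order on components, and the extremal addresses $(1,1,\dots)$, $(2,2,\dots)$ give $J_{\min}=J(h_1)$, $J_{\max}=J(h_2)$. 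Statement \ref{t:2gengdis3} then follows from Remark~\ref{r:icjto}: between any two distinct components there is a doubly connected component of $F(G)$, and there are infinitely many (indeed uncountably many) comparable pairs of components.

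For \ref{t:2gengdis4} I would just invoke statement~\ref{t:2gengdis1} plus connectedness of each $J\in\mbox{Con}(J(G))$ together with \cite[Lemma 5.27]{Splms10}: on a connected set, $T_{\infty,\tau}$ — being continuous and actually, via $M_\tau T_{\infty,\tau}=T_{\infty,\tau}$ and the maximum principle-type argument for the backward IFS — is constant on each component of $F(G)$, and one extends this to components of $J(G)$ by approximating from adjacent Fatou components (the two doubly connected components of ${\cal A}$ immediately surrounding $J$ from inside and outside sandwich $T_{\infty,\tau}|_J$, and by monotonicity statement~\ref{randomthm1m} the values force constancy). Statement~\ref{t:2gengdis5} is the sharp converse to monotonicity: if $J_1\ne J_2$ but $T_{\infty,\tau}|_{J_1}=T_{\infty,\tau}|_{J_2}=t$, then by monotonicity every component strictly between $J_1$ and $J_2$ also takes value $t$; since the $C_A$ for $A\in{\cal A}$ are strictly monotone (statement~\ref{randomthm1m}(a)), there can be \emph{no} doubly connected component of $F(G)$ strictly between $J_1$ and $J_2$ except possibly ones whose boundary is contained in $J_1\cup J_2$; combined with Remark~\ref{r:icjto} (there \emph{is} an $A\in{\cal A}$ between any two components) this forces exactly one doubly connected $A$ with $\partial A\subset J_1\cup J_2$. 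For \ref{t:2gengdis7}: $T_{\infty,\tau}^{-1}(\{0\})=\hat K(G_\tau)$ and in the two-generator case $\hat K(G)=K(h_1)$ when $J(h_1)=J_{\min}$ (the smallest filled-in Julia set is governed by the innermost component), and symmetrically $T_{\infty,\tau}^{-1}(\{1\})=\overline{F_\infty(G)}=\overline{F_\infty(h_2)}$; for $t\in(0,1)$ the level set $T_{\infty,\tau}^{-1}(\{t\})$ is, by continuity and monotonicity, either a single component $J_{\gamma(w)}$ (the ``irrational-type'' addresses, where both symbols recur infinitely often, so the address is not eventually periodic and there is no surrounding doubly connected component absorbing a whole interval of values — here one shows $\partial B_w=\partial A_{\infty,\gamma(w)}=J_{\gamma(w)}$ from the single-bounded-component structure of $F_{\gamma(w)}$, cf. \cite{SdpbpI}) or a closed annular region $K_{\gamma(\mu)}\setminus\mbox{int}(K_{\gamma(\rho)})$ between two components that are the two boundary components of a doubly connected $A$ — and these boundary components, by the classification of \cite{SdpbpI,S15} of how such ``gaps'' arise in a two-generator system, have addresses of the stated eventually-periodic form $(i_1,\dots,i_n,1,\overline 2)$ and $(i_1,\dots,i_n,2,\overline 1)$, with $J_{\gamma(\rho)}$ a quasicircle because it bounds a (pre-image of a) Fatou component of a hyperbolic-like piece.

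The main obstacle I anticipate is \ref{t:2gengdis7}(b) — the precise combinatorial identification of which pairs of addresses $(\rho,\mu)$ can occur as the two sides of a doubly connected component of $F(G)$, and the proof that $J_{\gamma(\rho)}$ is a quasicircle. This is exactly where the nerve/inverse-limit cohomology of \cite{S15} is needed: one must show that a doubly connected component of $F(G)$ ``comes from'' a doubly connected component at a finite level (a gap between $h_1^{-1}(J(G))$ and $h_2^{-1}(J(G))$ after pulling back by a finite word), which forces the eventual periodicity, and then that the inner boundary $J_{\gamma(\rho)}$ with $\rho$ eventually $(1,2,2,\dots)$ is the Julia set of an actual composition after passing to the limit, hence a Jordan curve, and quasiconformal stability (postcritical boundedness gives no critical points on it) upgrades Jordan to quasicircle. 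Everything else is bookkeeping with the surrounding order and the already-granted Theorems~\ref{randomthm1} and \ref{t:hnondiffp}.
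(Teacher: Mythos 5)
Your high-level skeleton is the same as the paper's (disjointness of $h_1^{-1}(J(G))$ and $h_2^{-1}(J(G))$, then Theorem~\ref{t:hnondiffp}, then a coding of $\mbox{Con}(J(G))$ by $\GN$ compatible with the lexicographic/surrounding orders, then monotonicity for statements \ref{t:2gengdis4}--\ref{t:2gengdis7}), but several steps you treat as routine are exactly where the work lies, and the mechanisms you propose for them fail. Most seriously, your proof of statement~\ref{t:2gengdis4} does not work: a given $J\in\mbox{Con}(J(G))$ need not have annular Fatou components ``immediately'' adjacent to it, and even with approximating annuli the sandwich $C_{A_{\rm in}}\leq T_{\infty,\tau}|_{J}\leq C_{A_{\rm out}}$ obtained from Theorem~\ref{randomthm1}-\ref{randomthm1m} only bounds the values on $J$; it cannot force them to coincide. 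The paper instead uses the coding itself: writing $J=J_{\alpha }$, it shows via \cite[Theorem 2.7]{SdpbpI} that for $\g \neq \alpha $ the escape $\g _{n,1}(z)\rightarrow \infty$ happens either for all $z\in J$ or for no $z\in J$ (according to whether $J_{\g }<_{s}J_{\alpha }$), so the escaping event is independent of the point and $T_{\infty ,\tau }|_{J}$ is constant. Likewise, your justification of $J_{\g }=\bigcap _{j}\g _{1}^{-1}\cdots \g _{j}^{-1}(J(G))$ and of the continuity of $\g \mapsto J_{\g }$ by a ``contraction-type/diameters shrink'' argument is false: these intersections are continua (whole components), no hyperbolicity or expansion is assumed, and in general $\g \mapsto J_{\g }$ is \emph{not} continuous (see Remark~\ref{r:jgstudy}). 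The paper proves $J_{\g }=\hat{J}_{\g ,\G }$ by the normal-families argument of Lemma~\ref{ncintk} together with the structure $\G _{\min }=\{h_{1}\}$ (Lemma~\ref{l:mgdiscg}, Proposition~\ref{p:mgengdis}), and only then gets continuity from \cite[Proposition 2.2(3)]{S7}.

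Two further gaps: (i) your derivation of the disjointness in statement~\ref{t:2gengdis1} is circular --- the existence of a doubly connected Fatou component ``separating'' $h_1^{-1}(J(G))$ from $h_2^{-1}(J(G))$, and the claim that these two (generally disconnected) sets are comparable in the surrounding order, presuppose what is to be proved; the paper simply invokes \cite[Theorem 3.17]{S15}, which is a genuine theorem of the interaction-cohomology machinery, not a consequence of Remark~\ref{r:icjto}. The same issue recurs in statement~\ref{t:2gengdis6}: the whole point is to prove the order compatibility, whose crux is the relation $h_{1}^{-1}(J(h_{2}))<_{s}h_{2}^{-1}(J(h_{1}))$ (Claim 2 in the paper), established by a contradiction argument combining Lemma~\ref{ncintk}, \cite[Lemma 3.13]{SdpbpIII} and \cite[Corollary 3.2]{HM}; asserting that ``$h_{j}^{-1}$ preserves the ordering'' assumes it. (ii) In statement~\ref{t:2gengdis7} you assume rather than prove the key facts: that for $w$ with both symbols occurring infinitely often $F_{\g (w)}$ has exactly one bounded component (the paper's Claim 3, again a Lemma~\ref{ncintk}-type argument), and the quasicircle property of $J_{\g (\rho )}$, which the paper gets directly from $J_{\g (\rho )}=(h_{w_{r-1}}\cdots h_{w_{1}})^{-1}(J(h_{2}))$, $J(h_{2})$ being a quasicircle by \cite[Theorem 2.20-4]{SdpbpI} and $P^{\ast }(G)\subset \mbox{int}(K(h_{2}))$ --- no nerve cohomology or quasiconformal-stability input is needed there, and the companion address $\mu $ in case (b) is produced by elementary combinatorics of the order isomorphism, not by a finite-level analysis of annuli. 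Minor additional slips: addresses with both symbols infinitely often can be eventually periodic, and $T_{\infty ,\tau }^{-1}(\{0\})\supset \hat{K}(G_{\tau })$ still needs the easy converse inclusion argument.
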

\begin{rem}
\label{r:jgstudy}
We remark that in general, $\gamma \in \GN \mapsto J_{\g }$ is not continuous (\cite[Example 1.7]{S7}). 
Under the assumptions of Theorem~\ref{t:2gengdis}, 
for the studies of $\{ J_{\g }\} _{\g \in \GN }$, 
see \cite{SdpbpII,SdpbpIII}. 
In \cite{SdpbpII}, under the assumptions of Theorem~\ref{t:2gengdis} and 
assuming that $h_{1}$ (with $J(h_{1})<_{s}J(h_{2})$) is hyperbolic and 
$P^{\ast }(\langle h_{2}\rangle )\subset \mbox{int}(K(h_{1}))$ 
(which implies $G$ is hyperbolic, i.e., $P(G)\subset F(G)$, 
see \cite[Theorem 2.36]{SdpbpI}), a classification of the fiberwise Julia sets $J_{\g }$ was given. 
In particular, it was  shown that under the assumptions of Theorem~\ref{t:2gengdis}, 
if the above $h_{1}$ is hyperbolic, $P^{\ast }(\langle h_{2}\rangle )\subset \mbox{int}(K(h_{1}))$ 
and $J(h_{1})$ is not a Jordan curve, then 
for any $\g =(\g _{1},\g_{2},\ldots )\in \GN $ satisfying that 
(a) $\sharp \{ n\in \NN \mid \g _{n}\neq h_{1}\} =\infty $ 
and (b) there exists a strictly increasing sequence $\{ n_{k}\} _{k=1}^{\infty }$ in $\NN $ 
such that $\sigma ^{n_{k}}(\g )\rightarrow (h_{1},h_{1},h_{1},\ldots )$ as $k\rightarrow \infty $, 
the Julia set $J_{\g }$ of $\g $ satisfies that 
(I) $J_{\g }$ is a Jordan curve but not a quasicircle, (II) the unbounded component $A_{\infty ,\g }$ 
of $\CCI \setminus J_{\g }$ is a John domain, and (III) the bounded component of 
$\CCI \setminus J_{\g }$ is not a John domain. Note that the above phenomenon is a new one 
which cannot hold in the usual iteration dynamics of a single polynomial.

\end{rem}
\begin{rem}
\label{r:2genreal}
Under the assumption of Theorem~\ref{t:2gengdis}, suppose that 
$h_{1}$ and $h_{2}$ with $J(h_{1})<_{s}J(h_{2})$ are real polynomials. 
Then for each $\g \in \GN $, $J_{\g }$ is symmetric with respect to the real axis, 
and $T_{\infty ,\tau }$ is symmetric with respect to the real axis. 
If, in addition to the above assumption, $h_{1}$ is hyperbolic, 
$P^{\ast }(\langle h_{2}\rangle )\subset \mbox{int}(K(h_{1}))$  
and \ref{t:2gengdis7}(a) in Theorem~\ref{t:2gengdis} holds, 
then by \cite{SdpbpII,SdpbpIII}, $J_{\g (w)}$ is a Jordan curve and 
$\sharp (J_{\g (w)}\cap \RR )=2.$ 
For the figure of the Julia set of $\langle h_{1},h_{2}\rangle \in {\cal G}_{dis}$ and 
the graph of $T_{\infty ,\tau }$, see \cite{Splms10}.  

\end{rem}
We now present some results on $3$-generator semigroups in ${\cal G}_{dis}$ and the associated random dynamics.  
\begin{thm}
\label{t:3genmain}
Let $G=\langle h_{1},h_{2},h_{3}\rangle \in {\cal G}_{dis}.$ 
For each $i=1,2,3,$ let 
$J_{i}\in \mbox{{\em Con}}(J(G))$ with $J(h_{i})\subset J_{i}.$ 
Suppose without loss of generality 
(since $(\mbox{{\em Con}}(J(G)),\leq _{s})$ is totally ordered), 
that $J_{1}\leq _{s}J_{2}\leq _{s}J_{3}.$ 
Then, we have exactly one of the following {\em (1),(2),(3)}.
\begin{itemize}
\item[{\em (1)}] 
$\{ h_{i}^{-1}(J(G))\} _{i=1,2,3}$ are mutually disjoint, 
$J_{\min }(G)=J(h_{1})$, $J_{\max }(G)=J(h_{3})$, 
$\hat{K}(G)=K(h_{1})$ and $F_{\infty }(G)=F_{\infty }(h_{3}).$ 
\item[{\em (2)}]
$h_{1}^{-1}(J(G))\cap (\bigcup _{i=2,3}h_{i}^{-1}(J(G)))=\emptyset $, 
$h_{2}^{-1}(J(G))\cap h_{3}^{-1}(J(G))\neq \emptyset $, 
$J_{\min }(G)=J(h_{1})$ and $\hat{K}(G)=K(h_{1}).$ 

\item[{\em (3)}] 
$h_{3}^{-1}(J(G))\cap (\bigcup _{i=1,2}h_{i}^{-1}(J(G)))=\emptyset $, 
$h_{1}^{-1}(J(G))\cap h_{2}^{-1}(J(G))\neq \emptyset $, 
$J_{\max }(G)=J(h_{3})$ and $F_{\infty }(G)=F_{\infty }(h_{3}).$  
\end{itemize}
Moreover, we have the following. 
{\em (a)} If $J_{1}=J_{2},$ then {\em (3)} holds. 
{\em (b)} If $J_{2}=J_{3}$, then {\em (2)} holds. 
{\em (c)} If  
$h_{2}^{-1}(J(G))\cap (\bigcup _{i=1,3}h_{i}^{-1}(J(G)))=\emptyset $, then 
{\em (1)} holds and $J_{1}<_{s}J_{2}<_{s}J_{3}$. 
\end{thm}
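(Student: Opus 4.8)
The plan is to exploit the combinatorial-topological structure of $\mathrm{Con}(J(G))$ established in \cite{SdpbpI} — recalled in Remark~\ref{r:icjto} — together with the backward self-similarity $J(G)=\bigcup_{i=1}^{3}h_i^{-1}(J(G))$. The starting observation is that for each $i$, $h_i^{-1}(J(G))$ is a disjoint union of connected components of $J(G)$: indeed $h_i^{-1}(J(G))\subset J(G)$, and since $h_i:\CCI\to\CCI$ is a finite branched cover, $h_i^{-1}(J(G))$ is closed with only finitely many components, each of which is a full component of $J(G)$ by continuity. So $h_i^{-1}(J(G))$ is an ``interval'' (order-convex subset) in the totally ordered set $(\mathrm{Con}(J(G)),\leq_s)$ — more precisely a finite union of components sandwiched between $h_i^{-1}(J_{\min})$ and $h_i^{-1}(J_{\max})$. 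The union of these three sets is all of $\mathrm{Con}(J(G))$; in particular $J_{\min}(G)$ and $J_{\max}(G)$ each lie in at least one $h_i^{-1}(J(G))$.

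First I would pin down where $J_{\min}$ and $J_{\max}$ sit. A key fact from \cite{SdpbpI} (used repeatedly for $\mathcal{G}_{dis}$) is that $h_i(J_{\min}(G))$ and $h_i(J_{\max}(G))$ are extremal in the appropriate sense: because $h_i$ is a polynomial, $h_i^{-1}$ of the minimal component is the minimal component among the preimages, and $h_i(J(h_i))=J(h_i)\subset J_i$, so $J(h_i)$ lies inside $h_i^{-1}(J(G))$ in a controlled position. Combined with $J(h_i)\subset J_i$ and the assumed ordering $J_1\leq_s J_2\leq_s J_3$, one gets that the three ``intervals'' $h_1^{-1}(J(G)),h_2^{-1}(J(G)),h_3^{-1}(J(G))$ are arranged (roughly) in increasing order, with $J_{\min}(G)$ forced to be the bottom of $h_1^{-1}(J(G))$ and $J_{\max}(G)$ the top of $h_3^{-1}(J(G))$. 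Since these three intervals cover the whole totally ordered set, the only ways they can fail to be pairwise disjoint are: (i) the interval for $h_2$ meets that for $h_3$ at the top (giving case (2) after checking $h_1$ stays disjoint, since its interval contains the minimum and $J(h_1)<_s J(h_2)$), (ii) the interval for $h_1$ meets that for $h_2$ at the bottom (giving case (3) symmetrically), or (iii) all three are disjoint (case (1)). Mutual exclusivity follows because, in a totally ordered cover by three intervals, you cannot simultaneously have the $h_1$–$h_2$ overlap and the $h_2$–$h_3$ overlap unless $h_1^{-1}(J(G))$ and $h_3^{-1}(J(G))$ themselves overlap, which would force $J(h_1)$ and $J(h_3)$ too close and contradict disjointness of $J(h_1),J(h_3)$ via the $\leq_s$ structure; this needs a short argument ruling out the ``double overlap'' configuration.

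For the consequences inside each case: $J_{\min}(G)=J(h_1)$ follows because $J_{\min}$ is the bottom component, it lies in $h_1^{-1}(J(G))$, and its $h_1$-image is again bottom-most, forcing $h_1(J_{\min})=J_{\min}$, hence $J_{\min}\subset J(h_1)$; the reverse inclusion is $J(h_1)\subset J_1$ plus minimality once one knows $J(h_1)$ is itself a single component mapped into itself. Then $\hat K(G)$ is the filled-in region bounded by $J_{\min}$, so $\hat K(G)=K(h_1)$; dually $F_\infty(G)=F_\infty(h_3)$ when $J_{\max}(G)=J(h_3)$. For the ``Moreover'' clauses: (a) if $J_1=J_2$ then $h_1^{-1}(J(G))$ and $h_2^{-1}(J(G))$ share the component $J_1$, so they are not disjoint, and by the mutual exclusivity just established this must be the $h_1$–$h_2$ overlap configuration, i.e. case (3); (b) is symmetric; (c) if $h_2^{-1}(J(G))$ meets neither of the others then none of the three intervals overlap (the $h_1$–$h_3$ overlap is already excluded), so case (1) holds, and since the $h_2$-interval is strictly between the other two and all are nonempty (each $J(h_i)\neq\emptyset$ is inside), $J_1<_s J_2<_s J_3$ is strict.

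The main obstacle I anticipate is the bookkeeping in step two: rigorously showing that the three order-intervals $h_i^{-1}(J(G))$ are arranged in the ``correct'' increasing order and that a simultaneous $h_1$–$h_2$ and $h_2$–$h_3$ overlap is impossible. This requires carefully using how a polynomial acts on the surrounding order $\leq_s$ (that $h_i^{-1}$ sends $\leq_s$-comparable connected sets to comparable ones, preserving or reversing in a way dictated by $h_i$ being proper and sending $\infty\mapsto\infty$), the fact from \cite{SdpbpI} that between any two components there is a doubly connected Fatou component (so overlaps are ``thin''), and the location of $J(h_i)$ within $h_i^{-1}(J(G))$. Once that order structure is nailed down, everything else is essentially reading off the three covering patterns of a totally ordered set by three intervals, which is routine.
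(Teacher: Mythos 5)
Your proposal has a genuine gap at precisely the step you identify as the anticipated "main obstacle," and it is not "routine bookkeeping" — it is the substantive content of the theorem.

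Two problems. First, the foundational premise is wrong: $h_i^{-1}(J(G))$ is \emph{not} in general a disjoint union of finitely many connected components of $J(G)$. It is a closed backward-invariant subset of $J(G)$, but since $J(G)$ may have uncountably many components (as happens for $2$-generator semigroups in ${\cal G}_{dis}$, see Theorem~\ref{t:2gengdis}-\ref{t:2gengdis2}), so may $h_i^{-1}(J(G))$; and more importantly, when two of the sets $h_i^{-1}(J(G))$, $h_j^{-1}(J(G))$ overlap, a single element $J\in\mbox{Con}(J(G))$ can contain pieces of both, in which case the components of $h_i^{-1}(J(G))$ meeting $J$ are \emph{proper} closed subsets of $J$, not full components of $J(G)$. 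So the "interval" framework you set up never gets off the ground in the case where overlaps occur.

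Second, and more fatally, your central combinatorial assertion is false as stated. You claim that in a cover of a total order by three intervals one cannot simultaneously have $h_1$--$h_2$ and $h_2$--$h_3$ overlaps without an $h_1$--$h_3$ overlap. But three intervals such as $[0,2]$, $[1,4]$, $[3,5]$ cover $[0,5]$, the first meets the second, the second meets the third, and the first is disjoint from the third. Excluding exactly this "double overlap" pattern is the whole point of the theorem, and it cannot be done by the order combinatorics of Con$(J(G))$ alone. The paper's proof obtains this exclusion from an entirely different source: the very first step invokes \cite[Theorems~1.5, 1.7]{S15}, the nerve/interaction-cohomology result asserting that for any $G=\langle h_1,\ldots,h_m\rangle\in{\cal G}_{dis}$ there is always at least one generator $h_k$ with $h_k^{-1}(J(G))$ disjoint from all the others. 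That result is deep (it analyses the nerves ${\cal N}_k$ of the sets $\{(h_{i_1}\cdots h_{i_k})^{-1}(J(G))\}$ and their inverse limit) and is indispensable here. After that, the paper still needs a separate Claim~1, namely $h_1^{-1}(J(G))\cap h_3^{-1}(J(G))=\emptyset$, which it proves by a case analysis that in the generic case $J_1<_sJ_2<_sJ_3$ rests on the non-trivial "$A$-region" argument ($A=K(h_3)\setminus\mbox{int}(K(h_1))$, showing $\bigcup_jh_j^{-1}(A)\subset A$ and then applying Claim~2 from the proof of Theorem~\ref{t:2gengdis}, which itself involves a compactness/normal-families argument using Lemma~\ref{ncintk}). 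None of this can be replaced by order bookkeeping. Your subsequent "reading off" of cases (1)--(3) and the Moreover clauses would be fine \emph{given} these two inputs, but without them the argument does not close.
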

\begin{cor}
\label{c:3gengdis}
Let $G=\langle h_{1},h_{2},h_{3}\rangle \in {\cal G}_{dis}.$ 
Then there exist infinitely many connected components of $J(G)$  
and there exist infinitely many doubly connected components of $F(G).$ 
More precisely, there exists an $i\in \{ 1,2,3\} $ such that 
{\em (1)} $h_{i}^{-1}(J(G))\cap (\bigcup _{j:j\neq i}h_{j}^{-1}(J(G)))=\emptyset $, 
{\em (2)} either $J(h_{i})=J_{\max }(G)$ or $J(h_{i})=J_{\min}(G)$, and 
{\em (3)} there exists a sequence $\{ J_{n}\} _{n\in \NN }$ of mutually different elements in \mbox{{\em Con}}$(J(G))$ 
and a sequence $\{ A_{n}\} _{n\in \NN }$ of mutually different doubly connected components of $F(G)$ 
such that $J_{n}\rightarrow  J(h_{i})$ and $\overline{A_{n}}\rightarrow J(h_{i})$ as $n\rightarrow \infty $ 
with respect to the Hausdorff metric. 
\end{cor}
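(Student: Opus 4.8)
The plan is to derive Corollary~\ref{c:3gengdis} directly from Theorem~\ref{t:3genmain}, so the real content is already packaged; what remains is to convert the trichotomy there into the stated ``infinitely many components'' conclusion. First I would invoke Theorem~\ref{t:3genmain}: in every one of the three cases there is an index $i\in\{1,2,3\}$ with $h_{i}^{-1}(J(G))\cap\bigl(\bigcup_{j\neq i}h_{j}^{-1}(J(G))\bigr)=\emptyset$ and with $J(h_{i})$ equal to either $J_{\min}(G)$ or $J_{\max}(G)$ (in case (1) one can take $i=1$ with $J(h_1)=J_{\min}(G)$, in case (2) again $i=1$ with $J(h_1)=J_{\min}(G)$, in case (3) $i=3$ with $J(h_3)=J_{\max}(G)$). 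This immediately gives parts (1) and (2) of the ``more precisely'' clause.

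Next I would produce the sequences $\{J_n\}$ and $\{A_n\}$. By symmetry assume $J(h_i)=J_{\max}(G)$; the other case is handled by the analogous argument with bounded/unbounded components interchanged. Consider the backward images $h_i^{-n}(J(G))$. Since $h_i^{-1}(J(G))\supset J(h_i)$ but $h_i^{-1}(J(G))$ is disjoint from $\bigcup_{j\neq i}h_j^{-1}(J(G))\supset J(h_j)$ for $j\neq i$, the set $h_i^{-1}(J(G))$ is a proper subset of $J(G)=\bigcup_{k=1}^3 h_k^{-1}(J(G))$, so it is disconnected; more precisely, since $J(h_i)=J_{\max}(G)$, the component $J(h_i)$ is the maximal component and $h_i^{-1}(J(G))$ also has components lying strictly below $J(h_i)$ in the surrounding order. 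Iterating $h_i$, the sets $h_i^{-n}(J(G))$ are nested-type configurations whose ``new'' components $h_i^{-(n-1)}(J_k)$ for $k\neq i$ shrink: by the standard contraction/expansion estimate for a postcritically bounded polynomial $h_i$ (whose Julia set $J(h_i)$ is connected), diam$(h_i^{-n}(K))\to 0$ for any compact $K\subset \hat{\mathbb{C}}\setminus F_\infty(h_i)$ bounded away from the exceptional set, and these shrinking components accumulate on $J(h_i)$. Choosing $J_n:=h_i^{-(n-1)}(J_{j_0})$ for a fixed $j_0\neq i$ (say the component of $h_{j_0}^{-1}(J(G))$ inside $J(G)$) gives mutually distinct components of $J(G)$ with $J_n\to J(h_i)$ in the Hausdorff metric, using that $J(G)$ is backward invariant under each generator. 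For the $A_n$, between $J_n$ and $J(h_i)=J_{\max}(G)$ (or between consecutive $J_n$'s, which are totally ordered by $\leq_s$ via Remark~\ref{r:icjto}) the existence part of Remark~\ref{r:icjto} provides a doubly connected component $A_n$ of $F(G)$ with $J_n<_s A_n<_s J(h_i)$; picking a subsequence so these are mutually distinct and squeezed between $J_n$ and $J(h_i)$ forces $\overline{A_n}\to J(h_i)$.

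I would then note that ``infinitely many connected components of $J(G)$'' and ``infinitely many doubly connected components of $F(G)$'' are immediate from the existence of the infinite sequences $\{J_n\}$ and $\{A_n\}$ of mutually distinct such objects. The main obstacle I anticipate is the contraction statement: one must be careful that the backward branches of $h_i$ used to define $J_n$ genuinely shrink and converge to the right limit. This requires invoking that $\langle h_i\rangle\in{\cal G}$ (so $J(h_i)$ is connected and $F_\infty(h_i)$ is simply connected with $P^*(\langle h_i\rangle)$ bounded), hence the inverse branches of $h_i^n$ on the simply connected domain $F_\infty(h_i)$ have derivatives tending to $0$ by the Koebe distortion theorem near the attracting behavior at $\infty$, while on $K(h_i)\setminus F_\infty(h_i)$ one uses hyperbolicity-type or normal-family arguments from \cite{SdpbpI} about $\mbox{Con}(J(G))$ and the surrounding order to control the position and diameter of $h_i^{-n}(J_{j_0})$. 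One also needs that infinitely many of the $A_n$ can be chosen distinct, which follows because $J_n\to J(h_i)$ strictly monotonically (or at least cofinally) in $(\mbox{Con}(J(G)),\leq_s)$, so the interposed annuli cannot all coincide. Once these geometric facts from \cite{SdpbpI} and Remark~\ref{r:icjto} are in hand, the rest is bookkeeping.
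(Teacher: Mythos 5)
Your reduction to Theorem~\ref{t:3genmain} for parts (1) and (2) is exactly what the paper does. The difference, and the gap, is in how you build the sequences for part~(3).

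You define $J_n := h_i^{-(n-1)}(J_{j_0})$, but for $n\geq 2$ this is not a single connected component of $J(G)$ --- it is a disjoint union of several components --- so it does not even produce an element of $\mbox{Con}(J(G))$. More seriously, the shrinkage claim you lean on, ``$\mathrm{diam}(h_i^{-n}(K))\to 0$ for any compact $K\subset\CCI\setminus F_\infty(h_i)$ bounded away from the exceptional set,'' is false as stated: already for $h_i(z)=z^2$ and $K=\{1/2\}$, the set $h_i^{-n}(K)$ spreads out toward the unit circle and its diameter tends to $2$, not $0$. What you actually need is that the \emph{individual components} of $h_i^{-n}(J_{j_0})$ shrink and accumulate on $J(h_i)$, and that is not automatic: the setting is not assumed hyperbolic, so Koebe-type control on backward branches inside $K(h_i)$ (where Siegel disks, parabolic basins, etc.\ may occur) is not available in the generality needed. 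The paper sidesteps this entirely by taking $J(h_j h_i^k)$ for a fixed $j\neq i$ and invoking \cite[Proposition~2.2(3)]{S7}, which says directly that for each $z\in J(h_i)$, $d(z,J(h_j h_i^k))\to 0$. Because these are genuine Julia sets (hence connected for postcritically bounded polynomials), letting $I_k$ be the component of $J(G)$ containing $J(h_j h_i^k)$ and using compactness of the hyperspace immediately gives $I_k\to J(h_i)$; distinctness of infinitely many $I_k$ follows since $I_k\subset h_j^{-1}(J(G))$ is disjoint from the component containing $J(h_i)$. Your use of \cite[Lemma~4.4]{SdpbpI} to interpose the annuli $A_n$ and force $\overline{A_n}\to J(h_i)$ is the same as in the paper and is fine once the $J_n$ are in hand. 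So the step you flag as the ``main obstacle'' is indeed a real gap, and the fix is to replace the backward-branch contraction argument by the Julia-set convergence result the paper cites.
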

\begin{rem}
\label{r:3gendet}
Let $G=\langle h_{1},h_{2},h_{3}\rangle \in {\cal G}_{dis}$, 
$(p_{1},p_{2},p_{3})\in {\cal W}_{3}$ and $\tau =\sum _{i=1}^{3}p_{i}\delta _{h_{i}}.$ 
Then, by Theorem~\ref{randomthm1} and Corollary~\ref{c:3gengdis}, 
the continuous function $T_{\infty ,\tau }$ can detect the boundaries of infinitely many doubly connected 
components of $F(G)$. Moreover, it can detect either $J_{\max }(G)$ or $J_{\min }(G).$  
There are many examples of each of (1), (2), and (3) of Theorem~\ref{t:3genmain} (\cite{SdpbpI}).  
\end{rem}

\begin{rem}
\label{r:gengdisj}
In \cite{SdpbpI}, it was shown that 
there exists a $3$-generator semigroup $G=\langle h_{1},h_{2},h_{3}\rangle \in {\cal G}_{dis}$ 
such that 
$\sharp \mbox{Con}(J(G))=\aleph _{0}.$ 
In \cite{SdpbpI}, it was also shown that for each $n\in \NN $ with $n\geq 2$, 
there exists a $2n$-generator semigroup $G=\langle h_{1},\ldots ,h_{2n}\rangle \in {\cal G}_{dis}$ 
with $\sharp \mbox{Con}(J(G))=n.$ By developing the idea in \cite{SdpbpI}, 
it was shown in \cite{SS} that for each $n\in \NN $ with $n\geq 2$, 
there exists a $4$-generator semigroup $G=\langle h_{1},\ldots ,h_{4}\rangle \in {\cal G}_{dis}$ with  
$\sharp \mbox{Con}(J(G))=n.$ Note that in  \cite{S15}, the author of this paper constructed a new cohomology theory 
for ``backward self-similar systems'' (backward IFSs),  and by using it, 
for a finitely generated semigroup $G=\langle h_{1},\ldots ,h_{m}\rangle \in {\cal G}$, 
we can investigate the 
cardinality of $\mbox{Con}(J(G))$ and $\mbox{Con}(F(G)).$   
More precisely, we investigate the cohomology groups of 
the nerve ${\cal N}_{k}$ of $\{ (h_{i_{1}}\cdots h_{i_{k}})^{-1}(J(G))\mid (i_{1},\ldots ,i_{k})\in \{ 1,\ldots ,m\} ^{k}\} $ 
for each $k\in \NN $ 
and their direct limits as $k\rightarrow \infty .$ In the proofs (section~\ref{Proofs}) of Theorems~\ref{t:2gengdis} and \ref{t:3genmain}, 
we use some results (geometric observations on the nerves ${\cal N}_{k}$ and their inverse limit,  
e.g. Con$(J(G))\cong $Con$(\varprojlim _{k}|{\cal N}_{k}|)$) from \cite{S15} and 
some results on the dynamics of $G\in {\cal G}_{dis}$ from \cite{SdpbpI}.  
\end{rem}
\begin{rem}
\label{r:ncj}
Let $\tau \in {\cal M}_{1}({\cal P}).$  
Suppose $G_{\tau }\in {\cal G}_{dis}$ and $\sharp \mbox{Con}(J(G))\leq \aleph _{0}.$ 
Then, by Theorem~\ref{randomthm1}-\ref{randomthm1c}, 
$T_{\infty ,\tau }:\CCI \rightarrow [0,1]$ is continuous and 
$T_{\infty ,\tau }(J(G_{\tau }))=[0,1].$ 
Thus there exists an element $J\in \mbox{Con}(J(G_{\tau }))$ such that 
$T_{\infty ,\tau }|_{J}$ is not constant. 
This illustrates the difference between $2$-generator semigroups in 
${\cal G}_{dis}$ (see Theorem~\ref{t:2gengdis}) and 
$m$-generator semigroups ($m\geq 3$) in ${\cal G}_{dis}$ (see Remark~\ref{r:gengdisj}). 
\end{rem}
\vspace{-4mm} 
\section{Background and tools}
\label{Background}
In this section, we give the known results and tools to prove the main results. 

{\bf (I)} We first explain the known results on general polynomial semigroups. 
Let $G$ be a polynomial semigroup in ${\cal P}$. Then $F(G)$ is an open subset of $\CCI $, 
$J(G)$ is a compact subset of $\CCI $, and 
for each $g\in G$, $g(F(G))\subset F(G)$ and $g^{-1}(J(G))\subset J(G).$ 
If $H$ is a subsemigroup of $G$, then $F(G)\subset F(H)$ and $J(H)\subset J(G).$ 
We set $E(G):= \{ z\in \CCI \mid \sharp G^{-1}(\{ z\} )<\infty \} .$ 
Then $\sharp E(G)\leq 2$ and for each $z\in \CCI \setminus E(G)$, 
$J(G)\subset \overline{G^{-1}(\{ z\} )}.$ In particular, 
for each $z\in J(G)\setminus E(G)$, $J(G)=\overline{G^{-1}(\{ z\} ) }.$ 
The Julia set $J(G)$ is a perfect set. 
The Julia set $J(G)$ is the unique minimal element in the space of all compact subsets $K$ of $\CCI $ with $\sharp K\geq 3$  
for which $g^{-1}(K)\subset K$ for each $g\in G.$ 
The Julia set $J(G)$ is equal to the closure of the set of repelling fixed points of elements of $G.$ In particular, 
$J(G) =\overline{\cup _{g\in G}J(g)}.$ For the proofs of these results, see \cite{HM}. 
Moreover, if $G=\langle h_{1},\ldots ,h_{m}\rangle $, then 
$J(G)=\cup _{j=1}^{m}h_{j}^{-1}(J(G))$ (see \cite[Lemma 1.1.4]{S1}). 
Moreover, it is easy to see that if $G$ is generated by a compact subset of ${\cal P}$, then 
$\infty \in F(G).$  

 
{\bf  (II)} We next explain the known results on the random dynamics of polynomials obtained in \cite{Splms10}.  
Let $\tau \in {\frak M}_{1,c}({\cal P}).$ 
Suppose $J_{\ker }(G_{\tau })=\emptyset $.  
Then there exists a non-empty finite dimensional subspace $U_{\tau }$ of $C(\CCI )$ with 
$M_{\tau }(U_{\tau })=U_{\tau }$ and a bounded operator $\pi _{\tau }: C(\CCI )\rightarrow U_{\tau }$ 
such that for each $\varphi \in C(\CCI )$, 
$M_{\tau }^{n}(\varphi -\pi _{\tau }(\varphi ))\rightarrow 0$ in $C(\CCI )$ as $n\rightarrow \infty .$ 
Moreover, $F_{meas}(\tau )={\frak M}_{1}(\CCI ).$ 
Moreover, there exist at least one and at most finitely many minimal sets of $G_{\tau }$. 
Moreover, for each minimal set $L$ of $G_{\tau }$, the function $T_{L, \tau }:\CCI \rightarrow [0,1]$ 
of probability of tending to $L$ is continuous on $\CCI $ and locally constant on $F(G).$ 
In particular, the function $T_{\infty ,\tau }: \CCI \rightarrow [0,1]$ 
is continuous on $\CCI $ and locally constant on $F(G_{\tau }).$ 
Moreover, denoting by $S_{\tau }$ the union of all minimal sets of $G_{\tau }$, 
we have that for each $z\in \CCI $, there exists a Borel subset ${\cal A}_{z}$ of ${\cal P}^{\NN} $ 
with $\tilde{\tau }({\cal A}_{z})=1$ such that 
for each $\g\in {\cal A}_{z}$, $d(\gamma _{n,1}(z),S_{\tau })\rightarrow  0$ as $n\rightarrow \infty .$ 
For the proofs of these results, see \cite[Theorem 3.15]{Splms10}. 

 In the proofs of the main results of this paper, we combine the above results in {\bf (I)(II)} and some new careful observations on 
 the dynamics of $G\in {\cal G}_{dis}$ and associated random dynamics. 
\vspace{-3mm} 
\section{Proofs of the main results} 
\label{Proofs} 
\subsection{Proof of Theorem~\ref{randomthm1}} 
\label{Proofsrt1}
In this subsection, we prove Theorem~\ref{randomthm1}.  
We need several lemmas. 

\begin{lem}
\label{l:pbjkerem}
Let $G\in {\cal G}_{dis}$ (possibly generated by a non-compact subset of ${\cal P}$). 
Then, $\infty \in F(G), $ {\em int}$(\hat{K}(G))\neq \emptyset $, 
$F_{\infty }(G)\cup \mbox{{\em int}}(\hat{K}(G))\subset F(G)$, 
and 
for each $z\in \CCI $, there exists an element $g\in G$ with 
$g(z)\in F_{\infty }(G)\cup \mbox{{\em int}}(\hat{K}(G))\subset F(G).$ 
In particular, $J_{\ker }(G)=\emptyset .$ 
\end{lem}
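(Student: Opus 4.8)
The plan is to reduce every assertion of the lemma to the single statement that, for $G\in{\cal G}_{dis}$, no minimal set for $(G,\CCI)$ is contained in $J(G)$, and then to prove that statement using the structure of ${\cal G}_{dis}$ from \cite{SdpbpI}. First I record the routine parts. From \cite{SdpbpI} (see Section~\ref{Background}~(II) and Remark~\ref{r:icjto}) we already have, for $G\in{\cal G}_{dis}$: $\infty\in F(G)$; $(\mbox{Con}(J(G)),\leq_{s})$ is totally ordered with least element $J_{\min}(G)$ and greatest element $J_{\max}(G)$; $F_{\infty}(G)$ is simply connected; each component of $F(G)$ is simply or doubly connected; and the family ${\cal A}$ of doubly connected components of $F(G)$ is non-empty and interleaved with $\mbox{Con}(J(G))$. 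Since $\infty\in F(G)$, there is $R>0$ with $\{z\in\CCI\mid|z|>R\}\cup\{\infty\}\subset F_{\infty}(G)$. Fixing $g_{0}\in G$ we have $\hat{K}(G)\subset K(g_{0})$, so $\hat{K}(G)$ is bounded in $\CC$; and $G^{\ast}(\{g(z)\})\subset G^{\ast}(\{z\})$ shows that $\hat{K}(G)$, hence also its interior (the image of an open set under a non-constant holomorphic map), is forward invariant under $G$, as is $F_{\infty}(G)$. By Montel's theorem the uniformly bounded family $\{g|_{\mbox{int}(\hat{K}(G))}\}_{g\in G}$ is normal, so $\mbox{int}(\hat{K}(G))\subset F(G)$; together with $F_{\infty}(G)\subset F(G)$ this gives $F_{\infty}(G)\cup\mbox{int}(\hat{K}(G))\subset F(G)$. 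Moreover, boundedness of the $G$-orbit of a point is an open property on $F(G)$: if $G(\{z_{0}\})$ is bounded and $z_{0}$ lies in a component $U$ of $F(G)$, then equicontinuity of $G$ on $U$ keeps the spherical distances $d(g(z),\infty)$ bounded below, uniformly in $g$, for $z$ near $z_{0}$. Hence $\hat{K}(G)\cap F(G)=\mbox{int}(\hat{K}(G))$, i.e. $\hat{K}(G)\setminus\mbox{int}(\hat{K}(G))\subset J(G)$. Finally, $J_{\ker}(G)=\emptyset$ is immediate from the ``for each $z$'' statement, since $g(z)\in F(G)$ forces $z\notin g^{-1}(J(G))$.

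Next comes the case analysis, which simultaneously yields $\mbox{int}(\hat{K}(G))\neq\emptyset$. Given $z\in\CCI$: if $z\in F_{\infty}(G)\cup\mbox{int}(\hat{K}(G))$, any $g\in G$ works by forward invariance. If $G(\{z\})$ is unbounded in $\CC$, pick $g\in G$ with $|g(z)|>R$, so $g(z)\in F_{\infty}(G)$; note every $z\in F(G)\setminus(F_{\infty}(G)\cup\mbox{int}(\hat{K}(G)))$ is of this type, since otherwise $z\in\hat{K}(G)\cap F(G)=\mbox{int}(\hat{K}(G))$. So the only remaining case is $z\in\hat{K}(G)\cap J(G)$, where each $g(z)$ again lies in $\hat{K}(G)$, and if no $g$ worked then (since $g(z)\in F(G)$ would force $g(z)\in\mbox{int}(\hat{K}(G))$) we would get $G^{\ast}(\{z\})\subset\hat{K}(G)\cap J(G)$, so $\overline{G^{\ast}(\{z\})}$ would be a non-empty compact $G$-forward-invariant subset of $J(G)$, hence (by Zorn's lemma) would contain a minimal set for $(G,\CCI)$ inside $J(G)$. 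Similarly, if $\mbox{int}(\hat{K}(G))=\emptyset$ then $\hat{K}(G)\subset J(G)$, and $\overline{\hat{K}(G)}$ — non-empty because $P^{\ast}(G)\subset\hat{K}(G)$, and compact and $G$-forward-invariant — would lie in $J(G)$ and contain a minimal set inside $J(G)$; conversely, if $\overline{\hat{K}(G)}$ meets $F(G)$ then $\mbox{int}(\hat{K}(G))\neq\emptyset$ by the openness noted above. Thus both ``$\mbox{int}(\hat{K}(G))\neq\emptyset$'' and the ``for each $z$'' statement (and with them $J_{\ker}(G)=\emptyset$) follow once we establish the claim that for $G\in{\cal G}_{dis}$ no minimal set for $(G,\CCI)$ is contained in $J(G)$.

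This last claim is the heart of the matter, and disconnectedness of $J(G)$ is essential: for $G=\langle z^{2},z^{3}\rangle\in{\cal G}\setminus{\cal G}_{dis}$ the singleton $\{1\}\subset J(G)$ is a minimal set, and correspondingly $J_{\ker}(G)=J(G)\neq\emptyset$. To prove the claim, let $L$ be a minimal set for $(G,\CCI)$. If $\infty\in L$ then $L=\{\infty\}\not\subset J(G)$; otherwise $L\subset\CC$ and $L\subset\hat{K}(G)\subset K(g_{0})$ for every $g_{0}\in G$, so $L$ is compact and no element of $G$ sends a point of $L$ to $\infty$. Assume $L\subset J(G)$. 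Using the total order of $(\mbox{Con}(J(G)),\leq_{s})$ and the separating annular components in ${\cal A}$, one first shows $L$ lies in a single component $J_{c}\in\mbox{Con}(J(G))$ — a core curve of a doubly connected component of $F(G)$ lying between two components met by $L$ would split $L$ into two clopen pieces incompatible, by the surrounding-order structure of \cite{SdpbpI}, with the forward invariance $g(L)\subset L$. Then, since $J(G)=\overline{\bigcup_{g\in G}J(g)}$ is disconnected, some $g_{1}\in G$ has $J(g_{1})$ in a component $\neq J_{c}$; combining $L\subset K(g_{1})$ with the position of $J_{c}$ relative to $J(g_{1})$ in the surrounding order (and the annuli separating them), one produces a word in $G$ that pushes $J_{c}$, hence a point of $L$, off $J(G)$ — into $\mbox{int}(\hat{K}(G))$ or onto a bounded Fatou cycle of some element — contradicting $g(L)\subset L\subset J(G)$. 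Making this last step rigorous, i.e. controlling exactly where the elements of $G$ move the component $J_{c}$ using only the order-theoretic and annular structure of ${\cal G}_{dis}$ from \cite{SdpbpI}, is the main obstacle, and I expect it to be the bulk of the argument.
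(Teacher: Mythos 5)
Your reduction of the lemma to the claim ``no minimal set for $(G,\CCI)$ is contained in $J(G)$'' is sound, and the routine parts (forward invariance, Montel, the openness-of-boundedness observation, and the Zorn argument producing a minimal set inside a compact forward-invariant subset of $J(G)$) are all fine. But you then stop short of proving the one claim everything else hangs on, and you say so yourself. That is a genuine gap: the lemma is not proved. Your sketch of the claim (``a core curve of a doubly connected component \dots would split $L$ into two clopen pieces,'' ``one produces a word in $G$ that pushes $J_{c}$ \dots off $J(G)$'') gestures at the right ingredients but never pins them down, and it is not obvious how to finish it with only the order-theoretic facts you list; in particular, forward invariance $g(L)\subset L$ does not by itself prevent $L$ from meeting several components of $J(G)$ ($g$ can permute components), so the ``single component $J_{c}$'' step already needs more than you give.

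The paper avoids the minimal-set detour entirely and is much shorter. It cites three facts from \cite{SdpbpI} directly: (i) $\mbox{int}(\hat{K}(G))\neq\emptyset$ and $\mbox{int}(\hat{K}(G))\subset F(G)$ (Theorem~2.20-1,5 and Proposition~2.19 there, so you need not derive the nonemptiness from anything); (ii) $\partial\hat{K}(G)\subset J_{\min}(G)$ (Theorem~2.20-2); and (iii) there is $g\in G$ with $J(g)\cap J_{\min}(G)=\emptyset$, and any such $g$ satisfies $g(J_{\min}(G))\subset\mbox{int}(\hat{K}(G))$ (Theorems~2.1 and 2.20-5(b)). Then the case analysis is just three lines: $z\notin\hat{K}(G)$ escapes to $F_{\infty}(G)$; $z\in\mbox{int}(\hat{K}(G))$ stays there under every $h\in G$; and $z\in\partial\hat{K}(G)\subset J_{\min}(G)$ is pushed into $\mbox{int}(\hat{K}(G))$ by the $g$ of (iii). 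These are exactly the facts you would need to make your minimal-set argument rigorous — in particular (iii) is the ``word in $G$ that pushes $J_{c}$ off $J(G)$'' you were hoping to construct — so if you import them from \cite{SdpbpI} you can discard the minimal-set scaffolding and give the direct argument; otherwise the bulk of the proof is still missing.
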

\begin{proof}
By \cite[Theorem 2.20-1,5]{SdpbpI}, 
$\infty \in F(G)$ and int$(\hat{K}(G))\neq \emptyset .$ Moreover, 
by \cite[Proposition 2.19]{SdpbpI}, 
int$(\hat{K}(G))\subset F(G).$ 
Let $z\in \CCI $ be a point. 
We consider the following three cases: Case 1: $z\not\in \hat{K}(G).$ 
Case 2: $z\in \mbox{int}(\hat{K}(G)).$ Case 3: $z\in \partial (\hat{K}(G)).$ 
 If we have Case 1, then there exists an element $g\in G$ with  
$g(z)\in F_{\infty }(G).$ 
If we have Case 2, then each element $h\in G$ satisfies 
$h(z)\in \mbox{int}(\hat{K}(G)).$ 
Suppose we have Case 3. Then, by \cite[Theorem 2.20-2]{SdpbpI}, 
$z\in \partial (\hat{K}(G))\subset J_{\min }(G).$ 
By \cite[Theorem 2.1]{SdpbpI}, there exists an element $g\in G$ with 
$J(g)\cap J_{\min }(G)=\emptyset .$ 
By \cite[Theorem 2.20-5(b)]{SdpbpI}, $g(J_{\min }(G))\subset \mbox{int}(\hat{K}(G)).$  
Thus $g(z)\in \mbox{int}(\hat{K}(G)).$ 
Therefore, we obtain that 
for each $z\in \CCI $, there exists an element $g\in G$ with  
$g(z)\in F_{\infty }(G)\cup \mbox{int}(\hat{K}(G))\subset F(G).$ 
Thus, $J_{\ker }(G)=\emptyset .$   
\end{proof}
\begin{lem}
\label{l:fmmmu}
Under the assumptions of Theorem~\ref{randomthm1}, 
statements~\ref{randomthm1c}, \ref{randomthm1-1}, \ref{randomthm1-5-1}--
\ref{randomthm1-6} in Theorem~\ref{randomthm1} hold. 
\end{lem}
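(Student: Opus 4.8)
\textbf{Proof plan for Lemma~\ref{l:fmmmu}.}
The plan is to deduce the five listed statements of Theorem~\ref{randomthm1} directly from Lemma~\ref{l:pbjkerem} together with the machinery of \cite[Theorem 3.15]{Splms10} summarized in \textbf{(III)} of section~\ref{Background}. First I would invoke Lemma~\ref{l:pbjkerem}: since $G_{\tau }\in {\cal G}_{dis}$, we have $J_{\ker }(G_{\tau })=\emptyset $, which already gives the first half of statement~\ref{randomthm1-5-1}, and by \cite[Theorem 3.15]{Splms10} it also gives $F_{meas}(\tau )={\frak M}_{1}(\CCI )$, completing statement~\ref{randomthm1-5-1}. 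The same theorem provides a finite-dimensional $M_{\tau }$-invariant subspace $U_{\tau }\subset C(\CCI )$ with $M_{\tau }^{n}(\varphi )\to \pi _{\tau }(\varphi )$ for every $\varphi $, and yields that $T_{\infty ,\tau }$ is continuous on $\CCI $ and locally constant on $F(G_{\tau })$; this is statement~\ref{randomthm1-1} and the continuity part of statement~\ref{randomthm1c}. That $M_{\tau }(T_{\infty ,\tau })=T_{\infty ,\tau }$ follows because $T_{\infty ,\tau }$ is a harmonic function for the Markov process (the event of tending to $\infty $ is shift-invariant), and the fact $T_{\infty ,\tau }(J(G_{\tau }))=[0,1]$ will follow once we know $T_{\infty ,\tau }|_{\hat K(G_{\tau })}\equiv 0$, $T_{\infty ,\tau }|_{F_{\infty }(G_{\tau })}\equiv 1$, and $J(G_{\tau })$ separates these two regions, so that the Intermediate Value Theorem applies along a path in $\CCI $; here I would use that $\hat K(G_{\tau })$ and $\overline{F_{\infty }(G_{\tau })}$ are disjoint nonempty closed sets (Lemma~\ref{l:pbjkerem}) with $J(G_{\tau })$ lying between them.

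Next I would handle the minimal sets, statement~\ref{randomthm1-min}. Since $G_{\tau }\subset {\cal P}$, the point $\infty $ is fixed by every element, so $\{\infty \}$ is a minimal set. By \cite[Theorem 3.15]{Splms10} there are only finitely many minimal sets; the remaining ones are contained in $\CC $. I would argue that any minimal set $L\neq \{\infty \}$ must lie in $\hat K(G_{\tau })$: if $L\not\subset \hat K(G_{\tau })$ then $L$ contains a point $z$ with $G_{\tau }(\{z\})$ unbounded, but by the structure from Lemma~\ref{l:pbjkerem} and \cite{SdpbpI} every point of $\CCI $ is eventually mapped into $F_{\infty }(G_{\tau })\cup \mbox{int}(\hat K(G_{\tau }))$; forward invariance of $L$ then forces $L$ to meet $F_{\infty }(G_{\tau })$, and iterating within $F_{\infty }(G_{\tau })$ (which is a basin-of-infinity type component) drives points to $\infty $, contradicting compactness of $L$ unless $L=\{\infty \}$. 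Hence every minimal set other than $\{\infty \}$ lies in $\hat K(G_{\tau })$; to get uniqueness I would use that $\hat K(G_{\tau })$ has the property that $\bigcup _{g\in G_{\tau }}g(\hat K(G_{\tau }))\subset \hat K(G_{\tau })$ and that $J_{\min }(G_{\tau })=\partial \hat K(G_{\tau })$ together with the attracting behaviour of $\mbox{int}(\hat K(G_{\tau }))$ from \cite[Theorem 2.20]{SdpbpI}, so that any two minimal sets in $\hat K(G_{\tau })$ must coincide. Call this unique set $L_{\tau }$.

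Statement~\ref{randomthm1-6} is then obtained by specializing the general ergodic-decomposition conclusions of \cite[Theorem 3.15]{Splms10} to the present case, where there are exactly two minimal sets $\{\infty \}$ and $L_{\tau }$. The general theory gives, for each minimal set $L$, a continuous function $T_{L,\tau }$ and, for each $\varphi \in C(\CCI )$, that $M_{\tau }^{n}(\varphi )$ converges uniformly to a combination of the $T_{L,\tau }\cdot (\text{limit averages over }L)$; with only two minimal sets and $T_{\{\infty \},\tau }=T_{\infty ,\tau }$, $T_{L_{\tau },\tau }=1-T_{\infty ,\tau }$ (since the two probabilities sum to $1$ by statement~\ref{randomthm1az}-type arguments, i.e.\ almost every trajectory converges to one of the minimal sets), this reduces to the stated formula with $\mu _{\tau }$ the unique $M_{\tau }^{\ast }$-invariant measure supported on $L_{\tau }$, so $\mbox{supp}\,\mu _{\tau }=L_{\tau }$. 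Dualizing gives the convergence of $(M_{\tau }^{\ast })^{n}(\nu )$; the two-dimensionality of the $M_{\tau }$-invariant subspace of $C(\CCI )$ follows because the limit operator $\pi _{\tau }$ has rank $2$ (its image is spanned by the constant function and $T_{\infty ,\tau }$, which are linearly independent as $T_{\infty ,\tau }$ is nonconstant by statement~\ref{randomthm1c}), and the ergodic components of $M_{\tau }^{\ast }$-invariant measures are exactly $\delta _{\infty }$ and $\mu _{\tau }$. The main obstacle I anticipate is the uniqueness of $L_{\tau }$ in $\hat K(G_{\tau })$: while \cite{Splms10} guarantees finitely many minimal sets, pinning the count to exactly one inside $\hat K(G_{\tau })$ genuinely requires the fine structure of $G\in {\cal G}_{dis}$ from \cite{SdpbpI} — specifically that $\mbox{int}(\hat K(G_{\tau }))$ is nonempty, connected in the relevant sense, and attracts $J_{\min }(G_{\tau })=\partial \hat K(G_{\tau })$ — and assembling these ingredients cleanly is where the real work lies.
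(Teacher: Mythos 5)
Your plan is structurally the same as the paper's: establish $J_{\ker}(G_\tau)=\emptyset$ from Lemma~\ref{l:pbjkerem}, invoke the general machinery of \cite[Theorem 3.15]{Splms10} (summarized in (III) of section~\ref{Background}), and then pin down the structure of $\Min(G_\tau,\CCI)$ using the geometry of $G\in{\cal G}_{dis}$ from \cite{SdpbpI}. Your derivation of $T_{\infty,\tau}(J(G_\tau))=[0,1]$ via IVT combined with local constancy on $F(G_\tau)$ is a reasonable way to obtain what the paper cites from \cite{Splms10}. However, there is a genuine gap exactly where you flag ``the real work lies,'' and it is not closed by the ingredients you name.

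Uniqueness of $L_\tau$ in $\hat K(G_\tau)$: your sketch ($G_\tau(\hat K(G_\tau))\subset\hat K(G_\tau)$, $J_{\min}(G_\tau)=\partial\hat K(G_\tau)$, plus ``the attracting behaviour of $\mbox{int}(\hat K(G_\tau))$'') does not yield the conclusion on its own, since a priori different regions of $\mbox{int}(\hat K(G_\tau))$ could support distinct minimal sets. What is actually needed, and what the paper uses, is the existence of a \emph{single} element $g\in G_\tau$ with $J(g)\cap J_{\min}(G_\tau)=\emptyset$ (\cite[Theorem 2.1]{SdpbpI}), for which \cite[Theorem 2.20-4,5]{SdpbpI} gives an attracting fixed point $z_g\in\mbox{int}(\hat K(G_\tau))$ with $\mbox{int}(K(g))$ its immediate basin and $\hat K(G_\tau)\subset\mbox{int}(K(g))$. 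Then $g^n$ contracts all of $\hat K(G_\tau)$ toward $z_g$, so every minimal set inside $\hat K(G_\tau)$ must contain $z_g$, whence there is only one. This same element $g$ is also needed for statement~\ref{randomthm1-6}: the rank of $\pi_\tau$ is $\sum_L r_L$ where $r_L$ is the period attached to the minimal set $L$ in \cite[Theorem 3.15-8]{Splms10}, and the paper uses $g^n|_{L_\tau}\to z_g$ to conclude $r_{L_\tau}=1$ (via \cite[Theorem 3.15-12]{Splms10}); your argument asserts rank $2$ from ``two minimal sets, one rank each'' without justifying $r_{L_\tau}=1$. Finally, the lemma covers the range \ref{randomthm1-5-1}--\ref{randomthm1-6}, which includes statement~\ref{randomthm1az}; you use it as an input to derive $T_{\infty,\tau}+T_{L_\tau,\tau}=1$ but do not prove it. The paper obtains it from \ref{randomthm1-5-1}, \ref{randomthm1-min} and \cite[Theorem 3.15-5,15]{Splms10}, and this ordering (minimal-set structure first, then the almost-sure convergence) should appear in your write-up.
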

\begin{proof}
By Lemma~\ref{l:pbjkerem} and \cite[Theorem 3.14]{Splms10}, 
we obtain that $J_{\ker }(G_{\tau })=\emptyset $ and $F_{meas}(\tau )={\frak M}_{1}(\CCI ).$ Thus statement~\ref{randomthm1-5-1} holds. 
By \cite[Lemmas 5.24,  5.27, Theorem 3.31]{Splms10} and \cite[Theorem 1.9]{Scp}, statements \ref{randomthm1-1} and \ref{randomthm1c} in Theorem~\ref{randomthm1} hold. 

We now prove statements~\ref{randomthm1-min} and \ref{randomthm1az} in Theorem~\ref{randomthm1}. 
By \cite[Theorem 2.1]{SdpbpI}, there exists an element $g\in \mbox{supp}\, \tau $ with $J(g)\cap J_{\min }(G_{\tau })=\emptyset .$ 
 By \cite[Theorem 2.20-4,5]{SdpbpI}, int$(K(g))$ is connected and there exists an attracting fixed point $z_{g}$ of $g$ 
in int$(\hat{K}(G_{\tau }))$ such that int$(K(g))$ is the immediate attracting basin of $z_{g}$ for the dynamics of $g$ and 
$\hat{K}(G_{\tau })\subset \mbox{int}(K(g)).$  
Since $G_{\tau }(\hat{K}(G_{\tau }))\subset \hat{K}(G_{\tau })$, 
Zorn's lemma implies that there exists a minimal set $L_{0}$ for $(G_{\tau },\CCI )$ with $L_{0}\subset \hat{K}(G_{\tau }).$ 
Considering the dynamics of $g$ in $\hat{K}(G_{\tau })$, 
we see that 
there exists a unique minimal set $L_{\tau }$ for $(G_{\tau },\CCI )$ with $L_{\tau }\subset \hat{K}(G_{\tau }).$ 
Therefore $\Min(G_{\tau },\CCI )=\{ \{ \infty \}, L_{\tau }\} .$ Thus statement~\ref{randomthm1-min} holds. 
Statement~\ref{randomthm1az} follows from statements~\ref{randomthm1-5-1}, \ref{randomthm1-min} 
and \cite[Theorem 3.15-5,15]{Splms10}. 

We now prove statement~\ref{randomthm1-6}. 
We again use the element $g$ in the previous paragraph. Since $g^{n}|_{L_{\tau }}\rightarrow z_{g}$ as 
$n\rightarrow \infty $, \cite[Theorem 3.15-12]{Splms10} implies that the number $r_{L_{\tau }}$ in 
\cite[Theorem 3.15-8]{Splms10} is equal to $1.$    By \cite[Theorem 3.15-1,2,9,13,15]{Splms10}, 
it follows that 
there exist two continuous linear functionals $\rho _{1},\rho _{2}:C(\CCI )\rightarrow \CC $ 
such that for each $\varphi \in C(\CCI )$, 
$$M_{\tau }^{n}(\varphi )\rightarrow \rho _{1}(\varphi )\cdot T_{\infty ,\tau }+\rho _{2}(\varphi )\cdot T_{L_{\tau },\tau } 
\mbox{ in } C(\CCI ) \mbox{ as } n\rightarrow \infty ,$$  
and such that supp$\, \rho _{1}=\{ \infty \} $ and supp$\, \rho _{2}=L_{\tau }.$  
From this, it is easy to see that $\rho _{1}=\delta _{\infty }$ and $\rho _{2}$ is a Borel probability measure 
on $\CCI .$ Moreover, by \cite[Theorem 3.15-15]{Splms10}, we obtain that 
$T_{\infty ,\tau }(z)+T_{L_{\tau },\tau }(z)=1$ for each $z\in \CCI .$ 
From these arguments, statement~\ref{randomthm1-6} 
holds. 
\end{proof} 
\begin{lem}
\label{l:fmcmono}
Let $\tau \in {\frak M}_{1}({\cal P}).$ 
Suppose that $\infty \in F(G_{\tau }).$ 
Let $U$ be a multiply connected component of $F(G_{\tau }).$ 
Let $B$ be a bounded component of $\CC \setminus U.$ 
Let $y\in B$ and let $z\in U.$ 
Then, for any $\gamma \in X_{\tau }$  
with $\gamma _{n,1}(y)\rightarrow \infty $ as 
$n\rightarrow \infty $, we have  
$\gamma _{n,1}(z)\rightarrow \infty $ as $n\rightarrow \infty .$ 
In particular, 
$T_{\infty ,\tau }(y)\leq T_{\infty ,\tau }(z).$ 
\end{lem}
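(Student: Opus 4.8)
The plan is to deduce the pathwise statement from the maximum modulus principle together with the normality of $\{\gamma_{n,1}\}$ on Fatou components, and then pass to $\tilde{\tau}$-measures. First I would dispose of a trivial case: each $\gamma_{n,1}$ is a polynomial, so $\gamma_{n,1}(\infty)=\infty$, and the assertion is automatic when $z=\infty$. So assume $z\in\CC$ (note also $y\in B\subset\CC$), and put $U':=U\setminus\{\infty\}$, still a domain. Since a polynomial has its only pole at $\infty$, each $\gamma_{n,1}$ restricts to a holomorphic map $U'\to\CC$; and since $\gamma\in X_{\tau}$ forces $\gamma_j\in\mathrm{supp}\,\tau$ for all $j$, we have $\gamma_{n,1}\in G_{\tau}$, so $\{\gamma_{n,1}\}_{n}$ is equicontinuous (normal) on $U'\subset F(G_{\tau})$.

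Next I would build a ``surrounding curve'' $\Lambda\subset U'$ for $y$. The set $\CCI\setminus U$ is compact, and (since $U$ is multiply connected) it has at least two components, one of which is $B$ (a bounded component of $\CC\setminus U$ is also a component of $\CCI\setminus U$). Hence there is a bounded clopen subset $N$ of $\CCI\setminus U$ with $B\subset N$, obtained by separating $B$ from $\infty$ (or from any point of another complementary component, in case $\infty\in U$). Then $N$ and $(\CCI\setminus U)\setminus N$ are disjoint compacta, so for $\varepsilon>0$ small enough the closed Euclidean $\varepsilon$-neighborhood $N_{\varepsilon}:=\{w\in\CC\mid d_{e}(w,N)\le\varepsilon\}$ is disjoint from $(\CCI\setminus U)\setminus N$, and consequently $\Lambda:=\{w\in\CC\mid d_{e}(w,N)=\varepsilon\}\subset N_{\varepsilon}\setminus N\subset U'$. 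Since $B\subset N\subset\mathrm{int}(N_{\varepsilon})$ and $\partial(\mathrm{int}(N_{\varepsilon}))\subset\Lambda$, the point $y$ lies in a bounded component $D$ of $\CC\setminus\Lambda$ with $\overline{D}\subset N_{\varepsilon}$ compact and $\partial D\subset\Lambda$.

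With $\Lambda$ in hand the main step is short. As $\gamma_{n,1}$ is entire and $\overline{D}$ is a compact subset of $\CC$, the maximum modulus principle gives $|\gamma_{n,1}(y)|\le\max_{\zeta\in\Lambda}|\gamma_{n,1}(\zeta)|$ for every $n$; since $\gamma_{n,1}(y)\to\infty$ this forces $\max_{\Lambda}|\gamma_{n,1}|\to\infty$. I then claim $\gamma_{n,1}\to\infty$ locally uniformly on $U'$. If not, normality yields a subsequence converging, locally uniformly in the spherical metric, to some $\psi$, and by the standard dichotomy for spherical limits of $\CC$-valued holomorphic maps on a domain, either $\psi\equiv\infty$ or $\psi$ is holomorphic $U'\to\CC$. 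The second alternative is impossible: $\psi$ would then be bounded on the compact set $\Lambda$, so along that subsequence $\gamma_{n,1}\to\psi$ uniformly on $\Lambda$ even in the Euclidean metric (the two metrics being comparable on a bounded region), contradicting $\max_{\Lambda}|\gamma_{n,1}|\to\infty$. Hence $\psi\equiv\infty$; as this holds along every subsequence, $\gamma_{n,1}\to\infty$ locally uniformly on $U'$, and in particular $\gamma_{n,1}(z)\to\infty$. This being valid for every $\gamma\in X_{\tau}$ with $\gamma_{n,1}(y)\to\infty$, we obtain $\{\gamma\in X_{\tau}\mid\gamma_{n,1}(y)\to\infty\}\subseteq\{\gamma\in X_{\tau}\mid\gamma_{n,1}(z)\to\infty\}$; since $\tilde{\tau}$ is carried by $X_{\tau}$, taking $\tilde{\tau}$-measures gives $T_{\infty,\tau}(y)\le T_{\infty,\tau}(z)$.

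The step I expect to be the main obstacle is the construction of $\Lambda$: it is elementary plane topology, but it must be arranged precisely enough that the maximum modulus principle can be applied, i.e.\ so that $\overline{D}$ is a compact subset of $\CC$ with $\partial D\subset\Lambda\subset U'$ and $y\in D$. Once that separating curve is available, the combination of the maximum modulus estimate with the normal-families dichotomy is routine.
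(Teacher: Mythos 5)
Your proof is correct and follows essentially the same route as the paper: use the maximum modulus principle over a set contained in $U$ that encloses $y$, and then spread the conclusion to all of $U$ via normality of $G_{\tau}$ on Fatou components. The differences are ones of explicitness rather than strategy. The paper simply asserts the existence of a Jordan curve $\zeta\subset U$ with $y$ in its bounded complementary component and then cites \cite[Lemma 5.24]{Splms10} for the passage from $\max_{\zeta}|\gamma_{n,1}|\to\infty$ to $\gamma_{n,1}\to\infty$ on $U$. You instead construct a separating level set $\Lambda=\{d_e(\cdot,N)=\varepsilon\}$ from a bounded clopen $N\supset B$ in $\CCI\setminus U$ (so you need not invoke the existence of a single Jordan curve), and you prove the spreading step directly via the spherical normal-families dichotomy (any spherical locally uniform limit of $\CC$-valued holomorphic maps on a domain is identically $\infty$ or $\CC$-valued holomorphic) rather than citing the external lemma. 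Both steps of your argument are sound; the topology of $\Lambda$ and $D$ is handled correctly ($\partial D\subset\Lambda$, $\overline D$ compact, $y\in D$), and the final passage to $\tilde\tau$-measures over $X_\tau$ matches the definition of $T_{\infty,\tau}$. The only thing the paper gains by its phrasing is brevity via citation; your version is more self-contained.
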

\begin{proof}
Suppose that $\gamma _{n,1}(y)\rightarrow \infty $ as 
$n\rightarrow \infty $. 
Let $\zeta $ be a Jordan curve (i.e. simple closed curve) in $U$ such that 
$y$ belongs to the bounded component of $\CC \setminus \zeta .$ 
By the maximum principle, \cite[Lemma 5.24]{Splms10} and 
forward invariance of $F(G_{\tau })$ under any element of $G_{\tau }$,   
we obtain that $\gamma _{n,1}\rightarrow \infty $ 
as $n\rightarrow \infty $ on $\zeta .$ 
Hence, $\gamma _{n,1}(z)\rightarrow \infty $ as $n\rightarrow \infty .$ 
\end{proof}

\begin{prop}
\label{p:fmcnotconst}
Let $\tau \in {\frak M}_{1}({\cal P}).$   
Let $U$ be a multiply connected component of $F(G_{\tau }).$ 
Let $C$ be the boundary of a bounded component of $\CC \setminus  U.$  
Let $V$ be an open subset of $\CCI $ such that $V\cap  C \neq \emptyset .$ 
Then, we have the following.
\begin{enumerate}
\item \label{p:fmcnotconst1}
If $\infty \in F(G_{\tau })$ and {\em int}$(\hat{K}(G_{\tau }))\neq \emptyset $, then 
$T_{\infty ,\tau }|_{V}$ is not constant. 
\item \label{p:fmcnotconst2}
If supp$\, \tau $ is compact, $\sharp \mbox{{\em supp}}\,\tau \leq \aleph _{0} $ and $\hat{K}(G_{\tau })\neq \emptyset $, 
then $T_{\infty ,\tau }|_{V}$ is not constant. 
\end{enumerate}
\end{prop}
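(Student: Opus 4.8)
The plan is to argue by contradiction: assume $T_{\infty ,\tau}|_V$ is constant, say $T_{\infty ,\tau}\equiv c$ on $V$, and derive a contradiction from the geometry of the multiply connected component $U$ together with the fact that $C$ is the boundary of a bounded component $B$ of $\CC\setminus U$. The key structural input is that $C\subset J(G_\tau)$ (since $C\subset\partial U\subset J(G_\tau)$), and that the two sides of $C$ — the ``inside'' $B$ and the component $U$ lying just ``outside'' $C$ — force $T_{\infty ,\tau}$ to take different values. First I would set up the dichotomy on $U$: by Lemma~\ref{l:fmcmono} applied to the bounded component $B$ of $\CC\setminus U$, for $y\in B$ and $z\in U$ we have $T_{\infty ,\tau}(y)\le T_{\infty ,\tau}(z)$; in fact along a.e.\ $\gamma$ the orbit of $y$ escaping to $\infty$ drags the orbit of $z$ to $\infty$ too. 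So the strategy is to produce a point of $B$ at which $T_{\infty ,\tau}$ is strictly smaller than $C_U$ (the constant value of $T_{\infty ,\tau}$ on $U$, which exists by statement~\ref{randomthm1-1}/\cite[Lemma 5.27]{Splms10} once $\infty\in F(G_\tau)$), and to combine this with continuity of $T_{\infty ,\tau}$ near $C$ to contradict $T_{\infty ,\tau}\equiv c$ on $V$.

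For part~(1): assume $\infty\in F(G_\tau)$ and $\mathrm{int}(\hat K(G_\tau))\neq\emptyset$. By Lemma~\ref{l:pbjkerem}-type reasoning (or directly: $J_{\ker}(G_\tau)=\emptyset$ is not needed here, only that there is an element carrying points into $F_\infty(G_\tau)\cup\mathrm{int}(\hat K(G_\tau))$), I would first locate the bounded component $B$ relative to $\hat K(G_\tau)$ and $F_\infty(G_\tau)$. Since $C\subset J(G_\tau)$ and $V\cap C\neq\emptyset$, continuity of $T_{\infty ,\tau}$ gives $T_{\infty ,\tau}\equiv c$ on $\overline{V}\cap C$, hence on all of $C$ by invariance/connectedness arguments if needed — but more robustly I would just work locally near a point $w\in V\cap C$. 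On one side of $w$ lie points of $B$; on the other side (inside $U$) lie points where $T_{\infty ,\tau}=C_U$. If $c=C_U$, I use that near $w$ there must be points of $B$ with $T_{\infty ,\tau}<C_U$: pick $\gamma\in X_\tau$ with $\gamma_{n,1}\to\infty$ on $U$ (possible since $U\subset F(G_\tau)$ and by Lemma 5.24 of \cite{Splms10} the limit functions on $U$ are constant; if $C_U<1$ there is positive probability of \emph{not} escaping). The cleanest route: choose $g\in G_\tau$ with $g(w)\in\mathrm{int}(\hat K(G_\tau))$ — possible because $w\in J(G_\tau)$ and, as in the proof of Lemma~\ref{l:pbjkerem}, $\partial\hat K(G_\tau)\subset J_{\min}(G_\tau)$ and there is $g$ with $g(J_{\min})\subset\mathrm{int}(\hat K(G_\tau))$; alternatively $w$ is a preimage of a point of $J_{\min}$. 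Then $g$ maps a small neighborhood of $w$ into a region where $T_{\infty ,\tau}$ is close to $0$ on an open set, but since $M_\tau(T_{\infty ,\tau})=T_{\infty ,\tau}$ and $T_{\infty ,\tau}\ge 0$, a local constant value $c$ near $w$ would force, via the averaging identity, $c=\int T_{\infty ,\tau}(g(w))\,d\tau(g)$, and combining with the fact that one of the maps pushes $w$ into $\mathrm{int}(\hat K(G_\tau))$ where $T_{\infty ,\tau}=0$ while all orbits stay bounded there (so $T_{\infty ,\tau}\equiv 0$ on $\mathrm{int}(\hat K(G_\tau))$) gives $c< $ (value on the $U$-side) unless everything collapses — a contradiction with $T_{\infty ,\tau}$ also being locally constant (equal to $C_{U'}$) on the component $U'$ of $F(G_\tau)$ on the other side of $C$. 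The point is that $C$ genuinely separates two Fatou components (or a Fatou component from $\mathrm{int}(\hat K(G_\tau))$) carrying different constants, and $V$ meets both.

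For part~(2): here $\mathrm{supp}\,\tau$ is compact with $\sharp\mathrm{supp}\,\tau\le\aleph_0$ and $\hat K(G_\tau)\neq\emptyset$ but we do \emph{not} assume $\mathrm{int}(\hat K(G_\tau))\neq\emptyset$; the extra hypotheses on countable support let one invoke a Baire-category / countable-union argument (the analogue of \cite[Theorem 3.15]{Splms10} machinery specialized to countably generated systems) to replace the role played by $\mathrm{int}(\hat K(G_\tau))$ in part~(1) — essentially one shows $T_{\infty ,\tau}$ cannot be locally constant at a point of $J(G_\tau)$ lying on such a separating curve $C$, because the countably many maps and the minimal set inside $\hat K(G_\tau)$ still force two distinct constant values on the two sides of $C$. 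I expect the main obstacle to be the bookkeeping in part~(1): verifying that the curve $C=\partial B$ really has a Fatou component of $F(G_\tau)$ on its outer side carrying a constant $C_{U'}$ strictly different from the value forced by the inner side (this uses the total order on $\mathrm{Con}(J(G_\tau))$ and the monotonicity of Lemma~\ref{l:fmcmono} in an essential way, plus the fact that $C\subset J(G_\tau)$ is nondegenerate so $V\cap C$ contains points in the interior of an arc of $\partial U$), and handling the degenerate configurations where $B$ itself is not contained in $\hat K(G_\tau)$. Once that separation is in place, continuity of $T_{\infty ,\tau}$ (statement~\ref{randomthm1c}) immediately rules out $T_{\infty ,\tau}|_V$ being constant.
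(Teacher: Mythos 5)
Your high-level strategy for part (1) — use Lemma~\ref{l:fmcmono} to compare a point on the bounded side of $U$ with a point of $V\cap U$, and then exhibit a positive-probability event that keeps the first point bounded — is indeed the paper's strategy. But the step where you actually produce such a point is not justified under the hypotheses of the proposition. You propose to find $g\in G_{\tau }$ with $g(w)\in \mbox{int}(\hat{K}(G_{\tau }))$ for the Julia point $w\in V\cap C$ by invoking $\partial \hat{K}(G_{\tau })\subset J_{\min }(G_{\tau })$ and the existence of $g$ with $g(J_{\min })\subset \mbox{int}(\hat{K}(G_{\tau }))$; these facts come from \cite{SdpbpI} and require $G_{\tau }\in {\cal G}_{dis}$ (indeed postcritical boundedness), which is \emph{not} assumed here — the proposition only assumes $\tau \in {\frak M}_{1}({\cal P})$, $\infty \in F(G_{\tau })$ and $\mbox{int}(\hat{K}(G_{\tau }))\neq \emptyset $. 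The paper avoids this by a different mechanism: by \cite[Corollary 3.1]{HM} some single $g\in G_{\tau }$ has $J(g)$ meeting $V\cap A$, where $A$ is the region bounded by a Jordan curve $\zeta \subset U$ around $w$; the blow-up property of $g$ near $J(g)$ then yields a point $y_{1}\in V\cap A$ with $g^{l}(y_{1})\in \mbox{int}(\hat{K}(G_{\tau }))$, and the same $g$ satisfies $g^{n}\rightarrow \infty $ on $U$ (via $\zeta \subset F_{\infty }(g)$ and the maximum principle), so that the \emph{same} word sends $y_{2}\in V\cap U$ into $F_{\infty }(G_{\tau })$. This simultaneous control is essential: your event $Z$ only forces the orbit of $w$ to stay bounded, and without knowing that $y_{2}$ escapes on $Z$ (or on a positive-measure subset of the escape set of $y_{2}$ disjoint from that of $w$) you only get $T_{\infty ,\tau }(w)\leq T_{\infty ,\tau }(y_{2})$, not the strict inequality. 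Your alternative closing claim — that $C$ separates two Fatou components (or a Fatou component from $\mbox{int}(\hat{K}(G_{\tau }))$) both of which $V$ meets — is also unjustified: near $w$ the bounded side of $C$ may consist entirely of Julia points, so $V$ need not meet any Fatou component other than $U$.

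Part (2) is essentially not proved. The paper's argument there is again to find $g$ with $J(g)\cap V\cap A\neq \emptyset $ and a point $y_{1}\in V\cap A$ with $g^{l}(y_{1})\in \hat{K}(G_{\tau })$, and the hypotheses enter through a concrete case split: if $\sharp \hat{K}(G_{\tau })\geq 2$ one applies Montel's theorem to $\{ g^{n}\} $ on $V\cap A$; if $\sharp \hat{K}(G_{\tau })=\{ z_{0}\} $ one uses \cite[Lemma 5.28]{Splms10} (this is where compact, countable support is used) to see $z_{0}$ is a common fixed point in $J(G_{\tau })$, and then a composition $g=\beta _{1}\beta _{2}^{p}$, chosen via \cite[Proposition 2.2(3)]{S7} so that $J(g)$ still meets $V\cap A$ and $z_{0}\notin E(g)$, to land some $y_{1}\in V\cap A$ exactly on $z_{0}$. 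Your appeal to a ``Baire-category / countable-union argument'' and to ``the minimal set inside $\hat{K}(G_{\tau })$'' does not identify this mechanism (no minimal-set statement is available at this level of generality), so the gap in part (2) is not merely bookkeeping but the entire argument.
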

\begin{proof}We may assume that $V$ does not meet the unbounded component of $\CCI \setminus U.$ 
We first prove statement~\ref{p:fmcnotconst1}. 
Suppose that $\infty \in F(G_{\tau })$ and int$(\hat{K}(G_{\tau }))\neq \emptyset $. 
Let $y\in V\cap C.$ 
Let $\zeta $ be a Jordan curve in $U$ such that 
$y$ belongs to the bounded component $A$ of $\CC \setminus \zeta .$ 
Since $C\subset J(G_{\tau })$, 
\cite[Corollary 3.1]{HM} implies that there exists a $g \in G_{\tau }$ 
such that $J(g)\cap V\cap A\neq \emptyset .$ 
Then, $\zeta \subset F_{\infty }(g).$ For, 
if $\zeta \subset \mbox{int}K(g)$, then the maximum principle implies that 
$A\subset  F(g)$, which is a contradiction. Hence, 
$\zeta \subset F_{\infty }(g).$ Therefore,  
$g^{n} \rightarrow \infty \mbox{ as } n\rightarrow \infty \mbox{ on }U.
$
Since $J(g)\cap V\cap A\neq \emptyset $ and int$(\hat{K}(G_{\tau }))\neq \emptyset $, 
there exists a point $y_{1}\in V\cap A$ and an $l\in \NN $ such that 
$g^{l}(y_{1})\in \mbox{int}(\hat{K}(G_{\tau })).$ 
Let $y_{2}\in U\cap V$ be a point. 
We may assume that  
$g^{l}(y_{2})\in F_{\infty }(G_{\tau }).$ 
Let $\{ \g _{j} \} _{j=1}^{m}$ be a finite sequence of elements of 
$\G _{\tau }$ such that 
$g^{l}=\g _{m}\circ \cdots \circ \g _{1}.$ 
Then, there exists a neighborhood $W$ of $(\g _{1},\ldots ,\g _{m})$ in $\G _{\tau }^{m}$ such that 
for each $\alpha =(\alpha _{1},\ldots ,\alpha _{m})\in W$, 
$\alpha _{m}\cdots \alpha _{1}(y_{1})\in \mbox{int}(\hat{K}(G_{\tau }))$ and 
$\alpha _{m}\cdots \alpha _{1}(y_{2})\in F_{\infty }(G_{\tau }).$ 
We set  
$Z:= \{ \rho =(\rho _{1},\rho _{2},\ldots )\in X _{\tau }\mid (\rho _{1},\ldots ,\rho _{m})\in W\} $.  
Then, 
for each $\omega =(\omega _{1},\omega _{2},\ldots )\in Z $, 
$\{ \omega _{r,1}(y_{1})\} _{r\in \NN }$ is bounded in $\CC $ and 
$\omega _{r,1}(y_{2})\rightarrow \infty $ as $r\rightarrow  \infty .$ 
Hence, $y_{1}$ belongs to a bounded component $B$ of $\CC \setminus U.$ 
By Lemma~\ref{l:fmcmono}, 
$\{ \rho \in X_{\tau }\mid \rho _{n,1}(y_{1})\rightarrow \infty \} 
\subset \{ \rho \in X_{\tau }\mid \rho _{n,1}(y_{2})\rightarrow \infty \} .$ 
From these arguments, 
it follows that 
$T_{\infty ,\tau }(y_{1})+ \tilde{\tau }(Z)\leq T_{\infty ,\tau }(y_{2}).$ 
Since $\tilde{\tau }(Z)>0$, 
we obtain that $T_{\infty ,\tau }(y_{1})<T_{\infty ,\tau }(y_{2}).$ 
Therefore, $T_{\infty ,\tau }|_{V}$ is not constant. 
Thus, we have proved statement~\ref{p:fmcnotconst1}. 

We now prove statement~\ref{p:fmcnotconst2}. 
Let $\zeta $ be a Jordan curve in $U$ such that 
$y$ belongs to the bounded component $A$ of $\CC \setminus \zeta .$ 
We now show the following claim 1:\\ 
Claim 1: There exist a $g\in G_{\tau }$, 
an $l\in \NN $, and a point $y_{1}\in V\cap A$ such that 
$J(g)\cap V\cap A\neq \emptyset $ and $g^{l}(y_{1})\in \hat{K}(G_{\tau }).$   

In order to show claim 1, 
we consider the following two cases.  
Case 1. $\sharp \hat{K}(G_{\tau })\geq 2.$  
Case 2. $\sharp \hat{K}(G_{\tau })=1.$ 

 Suppose that we have case 1. 
By \cite[Corollary 3.1]{HM}, there exists a $g\in G_{\tau }$ such that 
$J(g)\cap V\cap A\neq \emptyset .$ Since $\sharp \hat{K}(G_{\tau })\geq 2$ and 
$\cup _{n\in \NN }g^{n}(V\cap A)\subset \CC $, 
Montel's theorem implies that there exists an $l \in \NN $ and 
a point $y_{1}\in V\cap A$ such that 
$g^{l}(y_{1})\in \hat{K}(G_{\tau }).$ 
Hence, the statement of claim 1 holds when we have case 1. 

 Suppose that we have case 2. Let $z_{0}\in \CC $ be such that 
$\hat{K}(G_{\tau })=\{ z_{0}\} .$ 
By \cite[Lemma 5.28]{Splms10}, $h(z_{0})=z_{0}$ for each $h\in \G_{\tau }$ and 
$z_{0}\in J(G_{\tau }).$ 
Since $\G _{ \tau }$ is compact, there exists an element $\beta _{1}\in \G_{\tau }$ such that 
$z_{0}\not\in E(\beta _{1})$, 
where $E(\beta _{1})$ denotes the exceptional set of $\beta _{1}.$ 
 Moreover, 
  \cite[Corollary 3.1]{HM} implies that there exists an element $\beta _{2}\in G_{\tau }$ 
such that $J(\beta _{2})\cap V\cap A\neq \emptyset .$ 
By \cite[Proposition 2.2 (3)]{S7}, 
there exists a $p\in \NN $ such that $J(\beta _{1}\beta _{2}^{p})\cap V\cap A\neq \emptyset .$ 
Let $g:=\beta _{1}\beta _{2}^{p}.$ 
Since $h(z_{0})=z_{0}$ for each $h\in G_{\tau }$ and $z_{0}\not\in E(\beta _{1})$, 
we obtain that $z_{0}\not\in E(g).$ 
Therefore, there exist an $l\in \NN $ and a point $y_{1}\in V\cap A$ such that 
$g^{l}(y_{1})=z_{0}\in \hat{K}(G_{\tau }).$ Thus, we have shown claim 1.  
  
 Let $(g,l,y_{1})$ be as in claim 1. Let $y_{2}\in U\cap V$ be a point. 
Since $J(g)\cap V\cap A\neq \emptyset $, 
the maximum principle implies that $g^{n}\rightarrow \infty $ as $n\rightarrow \infty $ 
on $U.$  Hence, we may assume that 
$g^{l}(y_{2})\in F_{\infty }(G_{\tau }). $ 
Therefore $g^{l}(y_{1})\in \hat{K}(G_{\tau })$,  $g^{l}(y_{2})\in F_{\infty }(G_{\tau })$ and 
$y_{1}$ belongs to a bounded component $B$ of $\CC \setminus U.$ 
Combining this with the method in the proof of statement~\ref{p:fmcnotconst1}, 
we obtain that $T_{\infty ,\tau }(y_{1})<T_{\infty ,\tau }(y_{2}).$ 
Therefore, $T_{\infty ,\tau }|_{V}$ is not constant. 
Thus, we have proved statement~\ref{p:fmcnotconst2}. 
\end{proof}
\begin{cor}
\label{c:fmcnc}
Let $\tau \in {\frak M}_{1,c}({\cal P}).$ Suppose 
that $\hat{K}(G_{\tau })\neq \emptyset $ and $J_{\ker }(G_{\tau })= \emptyset .$    
Let $U$ be a multiply connected component of $F(G_{\tau }).$ 
Let $C$ be the boundary of a bounded component of $\CC \setminus  U.$
Let $V$ be an open subset of $\CCI $ such that $V\cap  C \neq \emptyset .$ 
Then, $T_{\infty ,\tau }:\CCI \rightarrow [0,1]$ is continuous and 
$T_{\infty ,\tau }|_{V}$ is not constant. 
\end{cor}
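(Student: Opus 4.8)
The plan is to obtain continuity directly from $J_{\ker }(G_{\tau })=\emptyset $, and to obtain the non-constancy of $T_{\infty ,\tau }|_{V}$ from Proposition~\ref{p:fmcnotconst}-\ref{p:fmcnotconst1}, after removing two minor discrepancies with its hypotheses. For continuity: since $\tau \in {\frak M}_{1,c}({\cal P})$ and $J_{\ker }(G_{\tau })=\emptyset $, the result recalled in Section~\ref{Background} (namely \cite[Theorem 3.15]{Splms10}) gives at once that $T_{\infty ,\tau }:\CCI \to [0,1]$ is continuous; so from here on I would concentrate on the non-constancy.

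Proposition~\ref{p:fmcnotconst}-\ref{p:fmcnotconst1} assumes $\infty \in F(G_{\tau })$ and $\mbox{int}(\hat{K}(G_{\tau }))\neq \emptyset $, whereas here only $\hat{K}(G_{\tau })\neq \emptyset $ is assumed; and there $C$ is the \emph{entire} boundary of a bounded component of $\CC \setminus U$, while here it is merely a connected component of such a boundary. The first point is handled as follows. Since $\mbox{supp}\,\tau $ is compact, $\infty \in F(G_{\tau })$. To upgrade $\hat{K}(G_{\tau })\neq \emptyset $ to $\mbox{int}(\hat{K}(G_{\tau }))\neq \emptyset $, pick $z\in \hat{K}(G_{\tau })$; since $J_{\ker }(G_{\tau })=\bigcap _{g\in G_{\tau }}g^{-1}(J(G_{\tau }))=\emptyset $, there is $g\in G_{\tau }$ with $w:=g(z)\in F(G_{\tau })$, and as $G_{\tau }(\{ w\} )\subset G_{\tau }(\{ z\} )$ is bounded in $\CC $ we get $w\in \hat{K}(G_{\tau })\cap F(G_{\tau })$. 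Now $G_{\tau }$ is equicontinuous at $w$ and $G_{\tau }(\{ w\} )\subset \{ |u|\le M\} $ for some $M$; choosing $\varepsilon >0$ so small that the spherical $\varepsilon $-neighbourhood of $\{ |u|\le M\} $ lies in $\{ |u|\le M+1\} $, equicontinuity furnishes a Euclidean disc $D$ about $w$ with $h(D)\subset \{ |u|\le M+1\} $ for every $h\in G_{\tau }$, so $D\subset \hat{K}(G_{\tau })$ and $\mbox{int}(\hat{K}(G_{\tau }))\neq \emptyset $.

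The second discrepancy needs no new idea: the proof of Proposition~\ref{p:fmcnotconst}-\ref{p:fmcnotconst1} uses $C$ only through $C\subset J(G_{\tau })$, and here $C\subset \partial B\subset \partial U\subset J(G_{\tau })$, where $B$ is the bounded component of $\CC \setminus U$ whose boundary contains $C$ (and $C\subset B$, since $B$ is closed). So I would re-run that argument: fix $y\in V\cap C\subset V\cap B$ and a Jordan curve $\zeta \subset U$ with $y$ in the bounded component $A$ of $\CC \setminus \zeta $; apply \cite[Corollary 3.1]{HM} to $V\cap A$, which meets $J(G_{\tau })$ at $y$, to get $g\in G_{\tau }$ with $J(g)\cap V\cap A\neq \emptyset $; deduce $\zeta \subset F_{\infty }(g)$ (else the maximum principle would put $A$ inside $F(g)$, contradicting $J(g)\cap A\neq \emptyset $), hence $U\subset F_{\infty }(g)$ and $g^{n}\to \infty $ on $U$; using $\mbox{int}(\hat{K}(G_{\tau }))\neq \emptyset $, find $y_{1}\in V\cap A$ and $l$ with $g^{l}(y_{1})\in \mbox{int}(\hat{K}(G_{\tau }))$, which (by forward invariance of $\hat{K}(G_{\tau })$) forces $g^{n}(y_{1})\not\to \infty $, hence $y_{1}$ into a bounded component of $\CC \setminus U$; take $y_{2}\in U\cap V$ with $g^{l}(y_{2})$ deep in $F_{\infty }(G_{\tau })$; then, exactly as in the proof of Proposition~\ref{p:fmcnotconst}, openness of these conditions in $\G _{\tau }^{m}$ (where $g^{l}=\gamma _{m}\circ \cdots \circ \gamma _{1}$) produces a cylinder $Z\subset {\cal P}^{\NN }$ with $\tilde{\tau }(Z)>0$ sending $y_{2}$ but not $y_{1}$ to $\infty $, and Lemma~\ref{l:fmcmono} then yields $T_{\infty ,\tau }(y_{1})+\tilde{\tau }(Z)\le T_{\infty ,\tau }(y_{2})$, so $T_{\infty ,\tau }|_{V}$ is not constant.

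The only genuinely new step is the implication ``$\hat{K}(G_{\tau })\neq \emptyset $ and $J_{\ker }(G_{\tau })=\emptyset $ $\Rightarrow $ $\mbox{int}(\hat{K}(G_{\tau }))\neq \emptyset $'' — that $\hat{K}(G_{\tau })$ cannot be a ``thin'' set under these hypotheses. I expect the remaining care to go into confirming that the auxiliary point $y_{1}$ really falls into a bounded complementary component of $U$ (so that Lemma~\ref{l:fmcmono} applies), but this is precisely the step already carried out inside the proof of Proposition~\ref{p:fmcnotconst}, so it should transfer without change.
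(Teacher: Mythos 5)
Your proof is correct, and its skeleton is the same as the paper's: get $\infty \in F(G_{\tau })$ from compactness of supp$\,\tau $, get int$(\hat{K}(G_{\tau }))\neq \emptyset $, and then invoke Proposition~\ref{p:fmcnotconst}-\ref{p:fmcnotconst1}, with continuity coming from the cooperation-principle results of \cite{Splms10} under $J_{\ker }(G_{\tau })=\emptyset $. The two places where you deviate are both fine but worth noting. First, for int$(\hat{K}(G_{\tau }))\neq \emptyset $ the paper simply cites \cite[Theorem 3.31]{Splms10}, whereas you give a short self-contained derivation: pick $z\in \hat{K}(G_{\tau })$, use $J_{\ker }(G_{\tau })=\emptyset $ to push it by some $g\in G_{\tau }$ to a point $w=g(z)\in \hat{K}(G_{\tau })\cap F(G_{\tau })$ (forward invariance of $\hat{K}(G_{\tau })$ along the subsemigroup $G_{\tau }g$), and then use equicontinuity of $G_{\tau }$ at $w$ together with boundedness of $G_{\tau }(\{ w\} )$ to produce a whole disc inside $\hat{K}(G_{\tau })$; this argument is valid and buys you independence from that external theorem. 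Second, your re-run of the proof of Proposition~\ref{p:fmcnotconst}-\ref{p:fmcnotconst1} to accommodate the fact that $C$ is only a \emph{component} of $\partial B$ is unnecessary: since $C\subset \partial B$, the hypothesis $V\cap C\neq \emptyset $ already gives $V\cap \partial B\neq \emptyset $, so the proposition applies verbatim with $\partial B$ in place of $C$ --- which is exactly what the paper does; your repetition of the argument (Jordan curve $\zeta \subset U$, \cite[Corollary 3.1]{HM}, the Montel/maximum-principle step, the cylinder set $Z$ of positive $\tilde{\tau }$-measure, and Lemma~\ref{l:fmcmono}) is faithful to it and introduces no gap, just redundancy.
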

\begin{proof}
Since supp$\,\tau $ is compact, we have $\infty \in F(G_{\tau }).$ 
By \cite[Theorem 3.31]{Splms10}, int$\hat{K}(G_{\tau })\neq \emptyset .$ 
By Proposition~\ref{p:fmcnotconst}, it follows that $T_{\infty ,\tau }|_{V}$ is not constant. 
Moreover, by \cite[Theorem 3.22]{Splms10}, $T_{\infty ,\tau }:\CCI \rightarrow [0,1]$ is continuous. 
\end{proof}
\begin{lem}
\label{l:pbkg}
Let $\Gamma $ be a subset of ${\cal P}$ and let $G=\langle \Gamma \rangle .$ 
Suppose $G\in {\cal G}_{dis}.$ 
Then for each $\gamma \in \Gamma ^{\NN }$, $K_{\g } $ is a connected compact subset $\CC $,
 $A_{\infty ,\g }$ is a simply connected domain, and $K_{\g }\cup A_{\infty ,\g }=\CCI .$   
\end{lem}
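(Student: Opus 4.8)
The plan is to prove Lemma~\ref{l:pbkg} by relating the fiberwise filled-in Julia set $K_{\g}$ to the structure of $J(G)$ for $G\in{\cal G}_{dis}$, using the known result (Background (II), from \cite{SdpbpI}) that $(\mbox{Con}(J(G)),\leq_s)$ is totally ordered with a maximal element $J_{\max}(G)$. First I would fix $\g=(\g_1,\g_2,\ldots)\in\Gamma^{\NN}$ and observe that $J_{\g}=\bigcap_{n\geq 1}\g_{n,1}^{-1}(J(G_{\g}))$ where $G_{\g}$ denotes the semigroup generated by the tail maps, but more usefully that $J_{\g}\subset J(G)$ and $K_{\g}$ is a decreasing-type intersection: writing $K_{\g,n}:=\g_{1}^{-1}\cdots\g_{n}^{-1}(K_{\sigma^n(\g)})$ (or more simply approximating via the filled-in Julia sets of the compositions $\g_{n,1}$), one has $K_{\g}=\bigcap_n \g_{1}^{-1}(\g_{2}^{-1}(\cdots))$ of nested compact sets, each of which is a preimage of a full continuum under a polynomial.

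The key point is that each $K_{\g}$ is contained in $\hat{K}(G)$ in an appropriate sense, and more importantly that $J_{\g}$ is contained in a single connected component of $J(G)$. Indeed, since $J(G)=\bigcup_{i}h_i^{-1}(J(G))$ backward-self-similarly (when finitely generated; in general one uses that $\g_n^{-1}(J(G))\subset J(G)$), and since for $G\in{\cal G}_{dis}$ every component of $J(G)$ is a continuum and $\CCI\setminus J_{\max}(G)$ has a component containing $\infty$ on which all generators behave like the dynamics near infinity, the fiberwise basin $A_{\infty,\g}$ contains a neighborhood of $\infty$ and is forward-coherent. The plan for connectivity of $K_{\g}$ is a pullback argument: $K_{\sigma^n(\g)}$ is connected for each $n$ provided we know $K_{\sigma(\g)}$ connected (induction on the tail), because $\g_1^{-1}(K)$ of a connected set $K$ containing no critical value of $\g_1$ outside... — but here is the crux — we need that $K_{\sigma(\g)}$ contains all critical values of $\g_1$ that lie in $\CC$. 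This is exactly where postcritical boundedness enters: since $G\in{\cal G}$, the planar postcritical set $P^*(G)$ is bounded, and by \cite[Theorem 2.20]{SdpbpI} (as used in Lemma~\ref{l:pbjkerem}), $P^*(G)\subset\mbox{int}(\hat{K}(G))$; combined with $\hat{K}(G)\subset K_{\sigma(\g)}$ (which follows since $G(\hat{K}(G))\subset\hat{K}(G)$ and $\hat{K}(G)$ is bounded, hence every tail composition keeps it bounded), the critical values of $\g_1$ in $\CC$ lie in $K_{\sigma(\g)}$, so $\g_1^{-1}(K_{\sigma(\g)})$ is connected by the standard fact that the preimage of a connected full set containing all finite critical values under a polynomial is connected.

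Once connectedness of each $K_{\g,n}$ is established, $K_{\g}=\bigcap_n K_{\g,n}$ is a nested intersection of compact connected sets (after checking the nesting $K_{\g,n+1}\subset K_{\g,n}$, which holds because $\hat{K}(G)\subset\g_{n+1}^{-1}(\hat{K}(G))$ gives $K_{\sigma^n(\g)}\supset\g_{n+1}^{-1}(K_{\sigma^{n+1}(\g)})$), hence connected and compact. Then $A_{\infty,\g}=\CCI\setminus K_{\g}$ is open and connected: it contains $\infty$, and being the complement of a connected compact subset of $\CC$ in $\CCI$ it is connected and simply connected (complement of a full continuum in the sphere). Finally $K_{\g}\cup A_{\infty,\g}=\CCI$ is immediate from the definitions since every point of $\CC$ either has bounded orbit under $\{\g_{n,1}\}$ (so lies in $K_{\g}$) or $\g_{n,1}(z)\to\infty$ (so lies in $A_{\infty,\g}$) — here one must invoke the standard dichotomy for orbits of polynomial composition sequences with escaping locus and the Green's function $G_{\g}$ from Definition~\ref{d:dfu}, namely that $\{G_{\g}>0\}=A_{\infty,\g}$ and $\{G_{\g}=0\}=K_{\g}$, with no third possibility.

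I expect the main obstacle to be verifying cleanly that $\hat{K}(G)\subset K_{\g}$ for every $\g$ and hence that the finite critical values of each $\g_n$ lie inside $K_{\sigma^n(\g)}$ — this is the hinge that makes the inductive pullback of connectedness work, and it requires carefully combining the forward-invariance $g(\hat{K}(G))\subset\hat{K}(G)$ for all $g\in G$ with the postcritical containment $P^*(G)\subset\mbox{int}(\hat{K}(G))$ from \cite[Theorem 2.20]{SdpbpI}. The rest (nesting, passage to the intersection, simple connectedness via complements in $\CCI$, the escape dichotomy via $G_{\g}$) is routine once that containment is in hand.
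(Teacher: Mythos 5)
There is a genuine gap, and it sits exactly at the step you yourself flag as the hinge. Your connectedness argument is circular: you propose to deduce that $K_{\g }$ is connected from the connectedness of $K_{\sigma (\g )}$ (and then of $K_{\sigma ^{n}(\g )}$), but these tail sets are unknowns of exactly the same kind, and an ``induction on the tail'' of an infinite forward composition has no base case. Moreover the auxiliary sets you introduce do not help: since $z\in K_{\g }$ if and only if $\g _{n,1}(z)\in K_{\sigma ^{n}(\g )}$, one has $K_{\g ,n}=\g _{1}^{-1}\cdots \g _{n}^{-1}(K_{\sigma ^{n}(\g )})=K_{\g }$ for every $n$, so the ``nested intersection'' is vacuous; and the alternative you mention, the filled-in Julia sets $K(\g _{n,1})$ of the finite compositions (iterated as single maps), neither nest nor intersect to $K_{\g }$ in any evident way. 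The pullback mechanism itself is sound (a polynomial preimage of a full connected compact set containing all finite critical values is connected, and $P^{\ast }(G)\subset \hat{K}(G)$ suffices here --- you do not need the stronger interior containment you attribute to \cite[Theorem 2.20]{SdpbpI}), but to make it non-circular you must replace the unknown sets $K_{\sigma ^{n}(\g )}$ by one fixed full connected compact set $E$ with $P^{\ast }(G)\subset E$ and $g^{-1}(E)\subset E$ for all $g\in G$, and then show $K_{\g }=\bigcap _{n}\g _{n,1}^{-1}(E)$. Producing such an $E$ is precisely what the paper's proof does: it takes a small hyperbolic ball $B=B_{h}(\infty ,r)$ in $F_{\infty }(G)$, which is forward invariant under every $g\in G$, so that $E=\CCI \setminus B$ works, and then argues with $A_{\infty ,\g }=\bigcup _{n}\g _{n,1}^{-1}(B)$, an increasing union of simply connected domains (simple connectivity of each preimage is where $G\in {\cal G}$ enters).

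The second, related gap is the dichotomy $K_{\g }\cup A_{\infty ,\g }=\CCI $ (together with openness of $A_{\infty ,\g }$, hence compactness of $K_{\g }$), which you dismiss as ``immediate from the definitions'' via a standard escape dichotomy and the Green's functions of Definition~\ref{d:dfu}. For an arbitrary sequence of polynomials this dichotomy is false (orbits can be unbounded without tending to $\infty $ when the maps are not uniformly expanding near $\infty $), and the Green's function machinery in Definition~\ref{d:dfu} is set up only for a finite $\Gamma $ (Sester's compact-base setting), whereas the lemma allows an arbitrary, possibly non-compact, $\Gamma \subset {\cal P}$. The hypothesis $G\in {\cal G}_{dis}$ must be used here: the paper gets $\infty \in F(G)$, the forward-invariant ball $B$ as above, and then \cite[Lemma 5.24]{Splms10} gives $\alpha _{n,1}\rightarrow \infty $ uniformly on $B$ for every $\alpha \in \G ^{\NN }$, so any orbit that ever enters $B$ escapes; this is what yields both the dichotomy and the description $A_{\infty ,\g }=\bigcup _{n}\g _{n,1}^{-1}(B)$ on which the whole proof (connectedness of $K_{\g }$ via simple connectedness of $A_{\infty ,\g }$) rests. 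As written, your proposal assumes these two facts rather than proving them.
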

\begin{proof}
Since $G\in {\cal G}_{dis}$, by \cite[Theorem 2.20]{SdpbpI} we have $\infty \in F(G).$ 
For each  $r>0$, we denote by  $B_{h}(\infty ,r)$ the ball with center $\infty $ and radius $r$ with respect to 
the hyperbolic distance on $F_{\infty }(G).$ Then 
$g(B_{h}(\infty ,r))\subset B_{h}(\infty ,r)$ for each $g\in G.$ 
Let $r>0$ be small enough such that $B_{h}(\infty ,r)$ is simply connected. Let $B:=B_{h}(\infty ,r).$  
By \cite[Lemma 5.24]{Splms10}, 
for each $\alpha \in \G ^{\NN }$, $\alpha _{n,1}\rightarrow \infty $ uniformly on $B$ as $n\rightarrow \infty .$ 
Therefore for each $\g \in \G ^{\NN }, K_{\g }\cup A_{\infty ,\g }=\CCI $ and   
$A_{\infty, \g }$ is an open neighborhood of $\infty .$ By the maximum principle, 
$A_{\infty ,\g }$ is connected. Moreover, $A_{\infty, \g }=\cup _{n=1}^{\infty }(\g _{n,1})^{-1}(B).$ 
Since $G\in {\cal G}$, each $\g _{n,1}^{-1}(B)$ is a simply connected domain. Thus 
$A_{\infty ,\g }$ is the union of increasing simply connected domains $\gamma _{n,1}^{-1}(B).$   Therefore 
$A_{\infty ,\g }$ is simply connected. Thus $K_{\g }$ is connected.   
\end{proof}

\begin{lem}
\label{l:y1y2}
Let $\tau \in {\frak M}_{1}({\cal P}).$ 
Suppose $G_{\tau }\in {\cal G}_{dis}.$ 
Let $A$ be a doubly connected component of $F(G_{\tau }).$ 
Let $y_{1}\in A$ and let $y_{2}$ be a point in the unbounded component of 
$\CC \setminus A.$ 
Then, we have the following.
\begin{enumerate}
\item
\label{l:y1y2-1}
For any $\gamma \in X_{\tau }$ with $\gamma _{n,1}(y_{1})\rightarrow \infty $ as $n\rightarrow \infty $, 
we have $\gamma _{n,1}(y_{2})\rightarrow \infty $ as $n\rightarrow \infty .$ In particular, 
$T_{\infty ,\tau }(y_{1})\leq T_{\infty ,\tau }(y_{2}).$
\item
\label{l:y1y2-2}
In addition to the assumptions of our lemma, 
suppose $y_{2}\in F(G_{\tau }).$ Let $U$ be the connected component 
of $F(G_{\tau })$ containing $y_{2}.$ Suppose that either $U$ is doubly connected or 
$U=F_{\infty }(G_{\tau }).$ Then $T_{\infty ,\tau }(y_{1})<T_{\infty ,\tau }(y_{2}).$ 
\end{enumerate}
\end{lem}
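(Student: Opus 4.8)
The plan is to deduce statement \ref{l:y1y2-1} directly from Lemma~\ref{l:fmcmono}. Since $A$ is a doubly connected component of $F(G_\tau)$, it is a multiply connected component of $F(G_\tau)$, and by Remark~\ref{r:icjto} (and the facts in Background (II)) $\infty \in F(G_\tau)$, so $\infty$ lies in the unbounded component of $\CC \setminus A$. Now $y_2$ lies in the unbounded component of $\CC\setminus A$; I would apply Lemma~\ref{l:fmcmono} with the roles reversed, i.e.\ view $A$ as the multiply connected component $U$ there, take the \emph{unbounded} complementary component and note the argument in Lemma~\ref{l:fmcmono} is symmetric in which complementary component one sits in: if $\gamma_{n,1}(y_1)\to\infty$ on a point $y_1$ inside $A$, then picking a Jordan curve $\zeta\subset A$ separating the bounded complementary components from $y_2$, the maximum principle plus \cite[Lemma 5.24]{Splms10} give $\gamma_{n,1}\to\infty$ uniformly on $\zeta$, hence on everything in the unbounded face of $\zeta$, in particular at $y_2$. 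Integrating over $\gamma\in X_\tau$ with respect to $\tilde\tau$ yields $T_{\infty,\tau}(y_1)\le T_{\infty,\tau}(y_2)$. (Alternatively: $\{\gamma : \gamma_{n,1}(y_1)\to\infty\}\subset\{\gamma:\gamma_{n,1}(y_2)\to\infty\}$ as events, so the inequality of probabilities is immediate.)

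For statement \ref{l:y1y2-2}, where $y_2\in F(G_\tau)$ lies in a component $U$ that is either doubly connected or equals $F_\infty(G_\tau)$, I want strict inequality, so I need to exhibit a positive-$\tilde\tau$-measure set of sequences $\gamma$ along which $\gamma_{n,1}(y_2)\to\infty$ but $\gamma_{n,1}(y_1)\not\to\infty$. This is where the geometry of ${\cal G}_{dis}$ enters, exactly as in the proof of Proposition~\ref{p:fmcnotconst}\ref{p:fmcnotconst1}. The idea: by \cite[Corollary 3.1]{HM} there is a $g\in G_\tau$ whose Julia set meets a small disk inside the bounded face of a Jordan curve $\zeta\subset A$ surrounding $y_1$; the maximum principle forces $\zeta\subset F_\infty(g)$, hence $g^n\to\infty$ on $U$ (so on $y_2$), while since $\mathrm{int}(\hat K(G_\tau))\neq\emptyset$ (Lemma~\ref{l:pbjkerem}) and $J(g)$ comes near $y_1$, some iterate $g^l$ maps a point $y_1'\in A$ near $y_1$ into $\mathrm{int}(\hat K(G_\tau))$, where it then stays bounded under all of $G_\tau$. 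Writing $g^l=\gamma_m\circ\cdots\circ\gamma_1$ with $\gamma_j\in\Gamma_\tau$ and taking a neighborhood $W$ of $(\gamma_1,\dots,\gamma_m)$ in $\Gamma_\tau^m$ preserving both these open conditions, the cylinder set $Z=\{\rho\in X_\tau : (\rho_1,\dots,\rho_m)\in W\}$ has $\tilde\tau(Z)>0$, and for $\rho\in Z$ the orbit of $y_1'$ stays bounded while that of $y_2$ goes to $\infty$. Combining with part \ref{l:y1y2-1} applied at $y_1'$ (and using $T_{\infty,\tau}(y_1)=T_{\infty,\tau}(y_1')$ if needed, since both lie in $A$, by \cite[Lemma 5.27]{Splms10}) gives $T_{\infty,\tau}(y_1)+\tilde\tau(Z)\le T_{\infty,\tau}(y_2)$, hence strict inequality.

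The main obstacle is the case analysis hidden in ``$U$ is doubly connected or $U=F_\infty(G_\tau)$'': one must ensure that the point $y_2$ genuinely gets pushed to $\infty$ by $g^l$ (so that it lies in the appropriate complementary component after a perturbation), which is clear when $y_2\in F_\infty(G_\tau)$ but when $U$ is a doubly connected component one should first move $y_2$ within $U$ using local constancy of $T_{\infty,\tau}$ on $F(G_\tau)$ and the fact established above that $g^n\to\infty$ on $U$. Handling the degenerate subcase $\sharp\hat K(G_\tau)=1$ is not needed here since $\mathrm{int}(\hat K(G_\tau))\neq\emptyset$ by Lemma~\ref{l:pbjkerem}, which simplifies matters compared to Proposition~\ref{p:fmcnotconst}\ref{p:fmcnotconst2}. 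Everything else is a routine repackaging of the maximum-principle / Montel arguments already used in this subsection.
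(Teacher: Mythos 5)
There is a genuine gap in both parts of your argument. For statement~\ref{l:y1y2-1}, the decisive step you invoke is that if $\gamma_{n,1}\to\infty$ on a Jordan curve $\zeta\subset A$, then $\gamma_{n,1}\to\infty$ on the \emph{unbounded} face of $\zeta$ ``by the maximum principle.'' But the maximum principle runs the other way: it controls the bounded face from $\zeta$ (this is exactly how it is used in the paper's Lemma~\ref{l:fmcmono}, passing from a point $y$ inside $\zeta$ to $\zeta$ itself), and says nothing about the unbounded face. In general a polynomial can be huge on a Jordan curve and small at an exterior point (e.g.\ $P(z)=z^2-100$ near the origin versus at $z=10$). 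The paper closes this gap by a nontrivial geometric fact specific to the postcritically bounded setting: $K_{\gamma}$ is \emph{connected} (Lemma~\ref{l:pbkg}), it contains $\partial\hat K(G_\tau)\subset J_{\min}(G_\tau)$, and since $J_{\min}(G_\tau)$ lies in the bounded component of $\CC\setminus A$ while $K_\gamma\cap A=\emptyset$, the whole of $K_\gamma$ lies in the bounded component; hence $y_2\notin K_\gamma$. Without the connectedness of $K_\gamma$, which uses $G_\tau\in{\cal G}$ and is not in your argument at all, the conclusion simply does not follow.

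For statement~\ref{l:y1y2-2}, your construction transplants the argument of Proposition~\ref{p:fmcnotconst}\ref{p:fmcnotconst1}, but the hypotheses there are different: in that proposition the set $V$ meets $J(G_\tau)$, so \cite[Corollary 3.1]{HM} produces $g$ with $J(g)$ arbitrarily close to the relevant point. Here $y_1\in A\subset F(G_\tau)$, so $J(g)\subset J(G_\tau)$ can never ``come near $y_1$''; any $g$ obtained from \cite[Corollary 3.1]{HM} for a disk inside the bounded face of $\zeta$ and meeting $J(G_\tau)$ will have $J(g)$ inside the bounded complementary component $B$ of $\CC\setminus A$. Since $J(g)$ is then a full compact subset of the full compact set $B$, one gets $K(g)\subset B$, so $y_1\in A$ lies in $F_{\infty}(g)$ and $g^n(y_1)\to\infty$ — the \emph{opposite} of what is needed (you wanted $g^l(y_1')\in\mathrm{int}(\hat K(G_\tau))$). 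The paper instead places the auxiliary dynamics \emph{around the outer boundary} of $A$: it chooses a thin doubly connected neighborhood $D$ of the outer boundary component $B_1'$ with $J_{\min}(G_\tau)\cup\{y_1\}<_s\overline D$ and $y_2$ outside $\overline D$, and invokes \cite[Lemma 4.2]{SdpbpI} to find $h\in G_\tau$ with $J(h)\subset D$. Then $J(h)$ is a quasicircle separating $y_1$ (inside, hence attracted to $z_h\in\mathrm{int}(\hat K(G_\tau))$) from $y_2$ (outside, hence pushed into $F_\infty(G_\tau)$), which is exactly the configuration needed to produce the positive-measure cylinder $Z$. That geometric positioning — $J(h)$ \emph{surrounding} $y_1$, not sitting inside the hole — is the idea missing from your plan.
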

\begin{proof}
We first prove statement~\ref{l:y1y2-1}. 
Since $G_{\tau }\in {\cal G}_{dis}$, by \cite[Theorem 2.20]{SdpbpI} we have $\infty \in F(G_{\tau }).$ 
Let $\gamma \in X_{\tau }$ and suppose $\gamma _{n,1}(y_{1})\rightarrow \infty $ as $n\rightarrow \infty .$ 
By \cite[Lemma 5.24]{Splms10}, $\gamma _{n,1}\rightarrow \infty $ locally uniformly on $A$ as $n\rightarrow \infty .$ 
Therefore $K_{\g }\subset \CC \setminus A.$ Thus $\partial \hat{K}(G_{\tau })\subset K_{\g }\subset \CC \setminus A.$ 
By \cite[Theorem 2.20-2]{SdpbpI}, $\partial \hat{K}(G_{\tau })\subset J_{\min }(G_{\tau }).$ 
Since $J_{\min }(G_{\tau })$ is included in the bounded component of $\CC \setminus A$, 
and since $K_{\g }$ is connected (see Lemma~\ref{l:pbkg}), 
it follows that $K_{\g }$ is included in the bounded component of $\CC \setminus A.$ 
Therefore $\g _{n,1}(y_{2})\rightarrow \infty $ as $n\rightarrow \infty .$ 
Thus we have proved statement~\ref{l:y1y2-1}. 

 We now prove statement~\ref{l:y1y2-2}. We prove the following claim:\\ 
Claim: There exists a map $g\in G_{\tau }$ such that 
$g(y_{1})\in \mbox{int}(\hat{K}(G_{\tau }))$ and $g(y_{2})\in F_{\infty }(G_{\tau }).$

 To prove this claim, let $B_{1} $ and $B_{2}$ be the two connected components of $\partial A$. 
We may assume $B_{2}<_{s}B_{1}.$ 
For each $i=1,2,$ let $B_{i}'\in \mbox{Con}(J(G_{\tau }))$ with $B_{i}\subset B_{i}'.$ 
Then $B_{2}'<_{s}B_{1}'.$ Therefore $J_{\min }(G_{\tau })\leq _{s}B_{2}'<_{s}B_{1}'.$ 
Let $D$ be a bounded, doubly connected, and open neighborhood of $B_{1}'$ such that 
$J_{\min }(G_{\tau })\cup \{ y_{1}\} <_{s}\overline{D}$ and $y_{2}$ belongs to the unbounded component of $\CC \setminus \overline{D}.$ 
By \cite[Lemma 4.2]{SdpbpI}, there exists an element $h\in G_{\tau }$ with 
$J(h)\subset D.$ Then $J(h)\cap J_{\min }(G_{\tau })=\emptyset .$ 
Moreover, $y_{2}\in F_{\infty }(h).$ 
By \cite[Theorem 2.20-4,5]{SdpbpI}, int$(K(h))$ is connected and is an immediate basin of an attracting 
fixed point $z_{h}$ of $h$, and  $z_{h}\in \mbox{int}(\hat{K}(G_{\tau })).$ 
Since $\partial \hat{K}(G_{\tau })\subset J_{\min }(G_{\tau })$ (\cite[Theorem 2.20-2]{SdpbpI}), 
$\{ z_{h}\} <_{s}J_{\min }(G_{\tau })<_{s}\overline{D}.$ 
Since $z_{h}$ belongs to the bounded  component of $\CC \setminus J(h)$, 
it follows that $y_{1}$ belongs to the bounded component of $\CC \setminus J(h).$ 
Therefore, there exists an $n\in \NN $ such that 
$h^{n}(y_{1})\in \mbox{int}(\hat{K}(G_{\tau }))$ and $h^{n}(y_{2})\in F_{\infty }(G_{\tau }).$ 
Thus, we have proved the claim. 

 Let $g\in G_{\tau }$ be the element in the above claim. Let 
 $h_{1},\ldots ,h_{n}\in \G _{\tau } $ be some elements such that 
 $g=h_{n}\circ \cdots \circ h_{1}.$ 
 Then there exists a neighborhood $W$ of $(h_{1},\ldots ,h_{n})$ in $\G _{\tau }^{n}$ 
such that for each $\omega =(\omega _{1},\ldots ,\omega _{n})\in W$, 
$\omega_{n}\cdots \omega _{1}(y_{1})\in \mbox{int}(\hat{K}(G_{\tau }))$ and 
$\omega _{n}\cdots \omega _{1}(y_{2})\in F_{\infty }(G_{\tau }).$ 
Therefore, for each $\g =(\g _{1},\g _{2},\ldots )\in X_{\tau }$ with $(\g _{1},\ldots,\g _{n})\in W$, 
we have that $\{ \g_{r,1}(y_{1})\} _{r\in \NN }$ is bounded and that 
$\g _{r,1}(y_{2})\rightarrow \infty $ as $r\rightarrow \infty .$ 
 Combining it with statement~\ref{l:y1y2-1}, we get  
 $T_{\infty ,\tau }(y_{1})+\tilde{\tau }(\{ \g \in X_{\tau }\mid (\g _{1},\ldots ,\g _{n})\in W\} )\leq 
T_{\infty , \tau }(y_{2}).$ 
Since $ \tilde{\tau }(\{ \g \in X_{\tau }\mid (\g _{1},\ldots ,\g _{n})\in W\} )>0$, 
we obtain $T_{\infty ,\tau }(y_{1})<T_{\infty ,\tau }(y_{2}).$ 
Therefore we have proved statement~\ref{l:y1y2-2}.
\end{proof} 
\begin{lem}
\label{l:j1j2}
Let $\tau \in {\frak M}_{1}({\cal P}).$ 
Suppose $G_{\tau }\in {\cal G}_{dis}.$ Let 
$J_{1},J_{2}\in \mbox{{\em Con}}(J(G_{\tau }))$ with $J_{1}<_{s}J_{2}.$ 
Then $\sup _{z\in J_{1}}T_{\infty ,\tau }(z)\leq \inf _{z\in J_{2}}T_{\infty ,\tau }(z).$ 
\end{lem}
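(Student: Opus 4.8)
The plan is to squeeze $J_1$ and $J_2$ around a doubly connected Fatou component and then invoke the two monotonicity lemmas already in place. Since $G_{\tau }\in {\cal G}_{dis}$, Remark~\ref{r:icjto} gives $J(G_{\tau })\subset \CC $ and tells us that $(\mbox{Con}(J(G_{\tau })),\leq _{s})$ is totally ordered, so $J_1,J_2$ are compact connected subsets of $\CC $ with $J_1<_{s}J_2$. By the same remark there is a doubly connected component $A$ of $F(G_{\tau })$ with $J_1<_{s}A<_{s}J_2$. As $A$ is bounded and doubly connected, $\CC \setminus A$ has exactly one bounded component $B_A$ and exactly one unbounded component $U_A$; also $\infty \in F(G_{\tau })$ by Lemma~\ref{l:pbjkerem}.

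Next I would locate $J_1$ and $J_2$ relative to $A$. Since $J_1$ is a component of the Julia set and $A$ is a Fatou component, $J_1\neq A$, so $J_1<_{s}A$ forces $J_1\subset B_A$. Similarly $A\neq J_2$, so $A<_{s}J_2$ means $A$ lies in a bounded component of $\CC \setminus J_2$; because $J_2$ is connected and disjoint from $A$, it lies entirely in $B_A$ or entirely in $U_A$, and if it lay in $B_A$ then $A\cup U_A$ would be an unbounded connected subset of $\CC \setminus J_2$ containing $A$, contradicting $A<_{s}J_2$. Hence $J_2\subset U_A$.

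Now fix any point $w\in A$. For each $z_1\in J_1\subset B_A$, Lemma~\ref{l:fmcmono} applied to the multiply (doubly) connected component $A$ of $F(G_{\tau })$ together with the bounded complementary component $B_A$ gives $T_{\infty ,\tau }(z_1)\leq T_{\infty ,\tau }(w)$. For each $z_2\in J_2\subset U_A$, statement~\ref{l:y1y2-1} of Lemma~\ref{l:y1y2} applied to $A$ with $y_1=w\in A$ and $y_2=z_2$ in the unbounded complementary component gives $T_{\infty ,\tau }(w)\leq T_{\infty ,\tau }(z_2)$. Chaining these, $T_{\infty ,\tau }(z_1)\leq T_{\infty ,\tau }(w)\leq T_{\infty ,\tau }(z_2)$ for all $z_1\in J_1$ and $z_2\in J_2$; taking the supremum over $z_1$ and the infimum over $z_2$ yields $\sup _{z\in J_1}T_{\infty ,\tau }(z)\leq T_{\infty ,\tau }(w)\leq \inf _{z\in J_2}T_{\infty ,\tau }(z)$, which is the assertion.

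The only mildly delicate point is the plane-topology bookkeeping in the second paragraph, namely translating the surrounding order $<_{s}$ into which complementary component of the annulus $A$ contains $J_1$ and which contains $J_2$; but this is elementary once the interpolating doubly connected component from Remark~\ref{r:icjto} is available, and the genuinely substantive input has already been supplied by Lemmas~\ref{l:fmcmono} and~\ref{l:y1y2}.
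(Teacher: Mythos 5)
Your proposal is correct and follows essentially the same route as the paper: interpolate a doubly connected Fatou component $A$ with $J_{1}<_{s}A<_{s}J_{2}$ (the fact recorded in Remark~\ref{r:icjto}, which the paper takes from \cite{SdpbpI}), then chain Lemma~\ref{l:fmcmono} (for $J_{1}$ in the bounded complementary component of $A$) with Lemma~\ref{l:y1y2} (for $J_{2}$ in the unbounded one). The paper's proof is simply terser, leaving the plane-topology bookkeeping you spelled out implicit.
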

\begin{proof}
By \cite[Theorem 2.20-1]{SdpbpI}, 
$\infty \in F(G_{\tau }).$  
By \cite[Lemma 4.4]{SdpbpI}, there exists a doubly connected component $A$ of $F(G_{\tau })$ with 
$J_{1}<_{s}A<_{s}J_{2}.$ By Lemmas~\ref{l:fmcmono}, \ref{l:y1y2}, 
it follows that $\sup _{z\in J_{1}}T_{\infty ,\tau }(z)\leq \inf _{z\in J_{2}}T_{\infty ,\tau }(z).$
\end{proof}

\begin{lem}
\label{l:y1pos}
Let $\tau \in {\frak M}_{1}({\cal P})$ and suppose $\infty \in F(G_{\tau }).$ 
Let $A\in \mbox{{\em Con}}(F(G_{\tau }))$ be multiply connected and let $y_{1}\in A.$ 
Then $T_{\infty ,\tau }(y_{1})>0.$ 
\end{lem}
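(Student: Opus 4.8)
The plan is to show that $T_{\infty,\tau}(y_1)>0$ by producing an event of positive $\tilde{\tau}$-measure consisting of sequences $\gamma$ along which $\gamma_{n,1}(y_1)\to\infty$. Since $A$ is a multiply connected component of $F(G_\tau)$, it contains a Jordan curve $\zeta$ that separates the plane; let $B$ be the bounded component of $\C\setminus\zeta$, so $y_1$ can be assumed (after choosing $\zeta$ suitably) to lie in $B$, and $B$ contains part of $J(G_\tau)$ since $A$ is not simply connected. First I would use \cite[Corollary 3.1]{HM}, or equivalently the fact that $J(G_\tau)=\overline{\bigcup_{g\in G_\tau}J(g)}$, to find an element $g\in G_\tau$ whose Julia set $J(g)$ meets $B$; then by the maximum principle $\zeta\subset F_\infty(g)$ (otherwise $\zeta\subset\mathrm{int}\,K(g)$ would force $B\subset F(g)$, contradicting $J(g)\cap B\neq\emptyset$), and hence $g^n\to\infty$ locally uniformly on $A$, in particular $g^n(y_1)\to\infty$.

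Next I would fix $l\in\NN$ large enough that $g^l(y_1)\in F_\infty(G_\tau)$, and write $g^l=\gamma_m\circ\cdots\circ\gamma_1$ with each $\gamma_j\in\Gamma_\tau=\mathrm{supp}\,\tau$. By continuity of composition and openness of $F_\infty(G_\tau)$, there is a neighborhood $W$ of $(\gamma_1,\ldots,\gamma_m)$ in $\Gamma_\tau^m$ such that $\alpha_m\cdots\alpha_1(y_1)\in F_\infty(G_\tau)$ for every $\alpha=(\alpha_1,\ldots,\alpha_m)\in W$. Setting $Z:=\{\rho=(\rho_1,\rho_2,\ldots)\in X_\tau\mid(\rho_1,\ldots,\rho_m)\in W\}$, we have $\tilde{\tau}(Z)>0$ because $W$ is a nonempty open subset of $\Gamma_\tau^m$ and $\Gamma_\tau=\mathrm{supp}\,\tau$. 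For each $\rho\in Z$, the point $\rho_{m,1}(y_1)$ lies in $F_\infty(G_\tau)$; then by \cite[Lemma 5.24]{Splms10} applied to the tail $\sigma^m(\rho)$ on a simply connected neighborhood $B_h(\infty,r)$ of $\infty$ contained in $F_\infty(G_\tau)$, we get $\rho_{n,1}(y_1)\to\infty$ as $n\to\infty$. Hence $Z\subset\{\gamma\in X_\tau\mid\gamma_{n,1}(y_1)\to\infty\}$, so $T_{\infty,\tau}(y_1)\geq\tilde{\tau}(Z)>0$.

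I expect the main obstacle to be the careful handling of the geometry in the first step: one must arrange the Jordan curve $\zeta$ and the bounded component so that $y_1$ is genuinely enclosed and so that $B$ contains a piece of $J(G_\tau)$ (using that $A$ being multiply connected forces some component of $\partial A$ into the bounded complementary region), and then verify that $\zeta\subset F_\infty(g)$ rather than $\zeta\subset\mathrm{int}\,K(g)$. Once the existence of such a $g$ with $g^l(y_1)\in F_\infty(G_\tau)$ is secured, the passage from a single orbit to a positive-measure event via a neighborhood $W$ in $\Gamma_\tau^m$ is routine and mirrors the argument already used in the proof of Proposition~\ref{p:fmcnotconst} and Lemma~\ref{l:y1y2}. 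Note that this lemma does not require $\mathrm{supp}\,\tau$ to be compact nor $\hat{K}(G_\tau)\neq\emptyset$ beyond what $\infty\in F(G_\tau)$ already provides through \cite[Lemma 5.24]{Splms10}.
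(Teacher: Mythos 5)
Your proposal is correct and follows essentially the same route as the paper's own proof: find $g\in G_{\tau }$ whose Julia set meets the bounded region enclosed by a curve in $A$ (via the fact that $J(G_{\tau })=\overline{\bigcup _{g\in G_{\tau }}J(g)}$), use the maximum principle to conclude $A\subset F_{\infty }(g)$ and hence $g^{l}(y_{1})\in F_{\infty }(G_{\tau })$ for some $l$, and then thicken the finite word representing $g^{l}$ to an open cylinder of positive $\tilde{\tau }$-measure along which $y_{1}$ is sent to $\infty $. The only cosmetic difference is your extra requirement that $y_{1}$ lie inside the Jordan curve $\zeta $, which is never used afterwards (only $y_{1}\in A\subset F_{\infty }(g)$ matters), whereas the paper instead takes a neighborhood of the component of $J(G_{\tau })$ containing the boundary of a hole of $A$.
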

\begin{proof}
Let $K$ be a bounded component of $\CC \setminus A$ and 
let $B\in \mbox{Con}(J(G_{\tau }))$ 
such that 
$\partial K \subset B. $ Let $D$ be a bounded neighborhood of $B$ such that 
$y_{1}$ belongs to the unbounded component of $\CC \setminus \overline{D}.$ 
By \cite[Corollary 3.1]{HM}, there exists an element $\alpha \in G_{\tau }$ with 
$J(\alpha )\cap D\neq \emptyset .$ 
By the maximum principle, 
$A\subset F_{\infty }(\alpha ).$ Therefore, there exists an $m\in \NN $ such that 
$\alpha ^{m}(y_{1})\in F_{\infty }(G_{\tau }).$ 
 Let 
 $h_{1},\ldots ,h_{n}\in \G _{\tau }$ be some elements such that 
 $\alpha ^{m}=h_{n}\circ \cdots \circ h_{1}.$ 
 Then there exists a neighborhood $W$ of $(h_{1},\ldots ,h_{n})$ in $\G _{\tau }^{n}$ 
such that for each $\omega =(\omega _{1},\ldots ,\omega _{n})\in W$, 
$\omega _{n}\cdots \omega _{1}(y_{1})\in F_{\infty }(G_{\tau }).$ 
Therefore, for each $\g \in X_{\tau }$ with $(\g _{1},\ldots ,\g _{n})\in W$, 
$\g _{r,1}(y_{1})\rightarrow \infty $ as $r\rightarrow \infty .$ 
Thus $T_{\infty ,\tau }(y_{1})\geq \tilde{\tau }(\{ \g \in X_{\tau }\mid 
(\g _{1},\ldots ,\g _{n})\in W\} )>0.$ 
\end{proof}
\begin{cor}
\label{c:pby1pos}
Let $\tau \in {\frak M}_{1}({\cal P})$ and suppose 
$G_{\tau }\in {\cal G}_{dis}.$ 
Let $A\in \mbox{{\em Con}}(F(G_{\tau }))$ be doubly connected. 
Let $y_{1}\in A.$ Then  $T_{\infty ,\tau }(y_{1})>0.$  
\end{cor}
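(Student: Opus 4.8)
The plan is to deduce this immediately from Lemma~\ref{l:y1pos}, after checking that its hypotheses are satisfied in the present setting. Lemma~\ref{l:y1pos} asserts that if $\infty \in F(G_{\tau })$ and $A\in \mbox{Con}(F(G_{\tau }))$ is multiply connected, then $T_{\infty ,\tau }(y_{1})>0$ for every $y_{1}\in A$; so all that remains is to verify the two hypotheses ``$\infty \in F(G_{\tau })$'' and ``$A$ is multiply connected'' from the weaker-looking hypotheses ``$G_{\tau }\in {\cal G}_{dis}$'' and ``$A$ is doubly connected''.

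First I would invoke the known results on the dynamics of $G\in {\cal G}_{dis}$: by \cite[Theorem 2.20-1]{SdpbpI} (also recorded in Remark~\ref{r:icjto} and in part {\bf (II)} of section~\ref{Background}), $G_{\tau }\in {\cal G}_{dis}$ implies $\infty \in F(G_{\tau })$. Second, a doubly connected domain is by definition not simply connected, hence is multiply connected; thus the doubly connected component $A$ is in particular a multiply connected component of $F(G_{\tau })$. With both hypotheses of Lemma~\ref{l:y1pos} now in hand, applying that lemma to $A$ and $y_{1}$ yields $T_{\infty ,\tau }(y_{1})>0$, which is the assertion.

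There is no real obstacle here: the corollary is a direct specialization, and all the substantive work—constructing $\alpha \in G_{\tau }$ with $J(\alpha )$ meeting a prescribed neighborhood, pushing $y_{1}$ into $F_{\infty }(G_{\tau })$ by a power of $\alpha $, and opening this up to a positive-$\tilde{\tau }$-measure cylinder set via continuity of composition on $\G_{\tau }^{n}$—has already been carried out in the proof of Lemma~\ref{l:y1pos}. The only point worth stating explicitly in the write-up is the reduction ``doubly connected $\Rightarrow$ multiply connected'' together with the citation giving $\infty \in F(G_{\tau })$, so that the hypotheses of Lemma~\ref{l:y1pos} are literally met.
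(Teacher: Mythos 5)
Your proposal is correct and is essentially identical to the paper's own proof, which likewise deduces the corollary from Lemma~\ref{l:y1pos} together with \cite[Theorem 2.20-1]{SdpbpI} giving $\infty \in F(G_{\tau })$, the reduction from doubly connected to multiply connected being immediate. Nothing further is needed.
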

\begin{proof}
By Lemma~\ref{l:y1pos} and \cite[Theorem 2.20-1]{SdpbpI}, the statement of our lemma holds.
\end{proof}
\begin{lem}
\label{l:outernc}
Let $\tau \in {\frak M}_{1}({\cal P}).$ Suppose $G_{\tau }\in {\cal G}_{dis }.$ 
Let $A\in \mbox{{\em Con}}(F(G_{\tau }))$ be doubly connected. 
Let $Q$ be an open subset of $\CCI $ with $Q\cap \partial A\neq \emptyset .$ 
Then $T_{\infty ,\tau }|_{Q}$ is not constant. 
\end{lem}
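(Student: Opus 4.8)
The plan is to reduce the statement to the non-constancy results already established for multiply connected components and their boundary components. Since $A\in\mbox{Con}(F(G_{\tau}))$ is doubly connected, $\partial A$ has exactly two connected components, say $B_{1}$ and $B_{2}$, which are the boundaries of the two complementary components of $A$ in $\CCI$; one of them, say $B_{2}$, is the boundary of a bounded component $K$ of $\CC\setminus A$. Since $Q\cap\partial A\neq\emptyset$, the open set $Q$ meets at least one of $B_{1}$ or $B_{2}$. The two cases are handled separately.

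First I would treat the case $Q\cap B_{2}\neq\emptyset$, i.e. $Q$ meets the boundary of a bounded component of $\CC\setminus A$. Here $A$ plays the role of the multiply connected component $U$ in Corollary~\ref{c:fmcnc}, and $B_{2}=C$ is a component of the boundary of a bounded component of $\CC\setminus A$. To apply Corollary~\ref{c:fmcnc}, I need $\hat{K}(G_{\tau})\neq\emptyset$ and $J_{\ker}(G_{\tau})=\emptyset$: the latter holds by Lemma~\ref{l:pbjkerem} since $G_{\tau}\in{\cal G}_{dis}$, and the same lemma gives $\mbox{int}(\hat{K}(G_{\tau}))\neq\emptyset$, hence $\hat{K}(G_{\tau})\neq\emptyset$. (Note $\tau\in{\frak M}_{1}({\cal P})$ here is not assumed to have compact support, so strictly one should either invoke Proposition~\ref{p:fmcnotconst}\ref{p:fmcnotconst1} directly — which only needs $\infty\in F(G_{\tau})$ and $\mbox{int}(\hat{K}(G_{\tau}))\neq\emptyset$, both supplied by Lemma~\ref{l:pbjkerem} — or silently use that the lemma is stated for such $\tau$. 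I would phrase it via Proposition~\ref{p:fmcnotconst}\ref{p:fmcnotconst1} to avoid the compactness hypothesis.) This immediately yields that $T_{\infty,\tau}|_{Q}$ is not constant.

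For the remaining case $Q\cap B_{1}\neq\emptyset$, where $B_{1}$ is the boundary of the \emph{unbounded} complementary component of $A$: the idea is to exhibit two points of $Q$ with different $T_{\infty,\tau}$-values using Lemma~\ref{l:y1y2}. Pick $y_{1}\in Q\cap A$ (possible since $B_{1}\subset\partial A$ and $A$ is open, so points of $A$ arbitrarily close to $B_{1}$ lie in $Q$); and pick $y_{2}\in Q$ lying in the unbounded component of $\CC\setminus A$. The point $y_{2}$ lies in $J(G_{\tau})$ or in $F(G_{\tau})$; if it happens to lie in a doubly connected component or in $F_{\infty}(G_{\tau})$, Lemma~\ref{l:y1y2}\ref{l:y1y2-2} gives $T_{\infty,\tau}(y_{1})<T_{\infty,\tau}(y_{2})$ directly. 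To guarantee such a choice, I would first move outward: the unbounded component of $\CC\setminus A$ contains points of $F_{\infty}(G_{\tau})$ arbitrarily far out, but those need not be in $Q$; instead, choose $y_{2}\in Q$ in the unbounded component of $\CC\setminus\overline{A}$ and close to $B_{1}$, then either $y_{2}$ lies on $B_{1}\subset J(G_{\tau})$ (then use Lemma~\ref{l:y1y2}\ref{l:y1y2-1} together with the strict inequality obtained by perturbing, exactly as in the proof of Proposition~\ref{p:fmcnotconst}) or $y_{2}$ lies in a Fatou component $U'$ adjacent to $B_{1}$ from outside; such $U'$ is either doubly connected or equals $F_{\infty}(G_{\tau})$ unless it is simply connected and bounded — but a bounded simply connected Fatou component on the far side of $B_{1}$ would have to be surrounded by $B_{1}'\in\mbox{Con}(J(G_{\tau}))$, contradicting that $B_{1}$ bounds the unbounded complementary component of $A$ and that $(\mbox{Con}(J(G_{\tau})),\leq_{s})$ is totally ordered.

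The main obstacle I anticipate is the case $Q\cap B_{1}\neq\emptyset$ with $y_{2}$ forced to lie in $J(G_{\tau})$: there one cannot appeal to Lemma~\ref{l:y1y2}\ref{l:y1y2-2}, so one must redo the perturbation argument of Proposition~\ref{p:fmcnotconst}, producing $g\in G_{\tau}$ and $l\in\NN$ with $g^{l}(y_{1})\in\mbox{int}(\hat{K}(G_{\tau}))$ and $g^{l}(y_{2})\in F_{\infty}(G_{\tau})$, then thickening to an open set $W$ of words of positive $\tilde{\tau}$-measure and combining with Lemma~\ref{l:y1y2}\ref{l:y1y2-1} (whose monotonicity applies since $y_{1}\in A$ and $y_{2}$ is in the unbounded component of $\CC\setminus A$) to get $T_{\infty,\tau}(y_{1})+\tilde{\tau}(W)\leq T_{\infty,\tau}(y_{2})$, hence strict inequality. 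The geometric input making the perturbation work — that one can steer $y_{1}$ into $\mbox{int}(\hat{K}(G_{\tau}))$ while simultaneously sending $y_{2}$ to $\infty$ — comes from \cite[Theorem 2.1, Theorem 2.20-4,5]{SdpbpI}, exactly as in the proofs of Proposition~\ref{p:fmcnotconst} and Lemma~\ref{l:y1y2}\ref{l:y1y2-2}, so I expect the argument to be a routine but careful assembly of those pieces rather than anything genuinely new.
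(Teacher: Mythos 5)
Your treatment of the case $Q\cap B_{2}\neq \emptyset$ (the boundary of the bounded complementary component of $A$) is fine and is exactly the paper's first step: apply Proposition~\ref{p:fmcnotconst}-\ref{p:fmcnotconst1}, with $\infty \in F(G_{\tau })$ and $\mbox{int}(\hat{K}(G_{\tau }))\neq \emptyset $ supplied by \cite{SdpbpI}. The gap is in the outer-boundary case $Q\cap B_{1}\neq \emptyset $. There you fix the second point $y_{2}$ in advance (a point of $Q$ on $B_{1}$ or just outside $\overline{A}$) and assert that, ``exactly as in'' Proposition~\ref{p:fmcnotconst} and Lemma~\ref{l:y1y2}-\ref{l:y1y2-2}, one can produce a single $g\in G_{\tau }$ with $g^{l}(y_{1})\in \mbox{int}(\hat{K}(G_{\tau }))$ and $g^{l}(y_{2})\in F_{\infty }(G_{\tau })$. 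None of the cited results gives this for a \emph{prescribed} $y_{2}\in J(G_{\tau })$: in Proposition~\ref{p:fmcnotconst} the point that is steered into $\mbox{int}(\hat{K}(G_{\tau }))$ is \emph{found} near $J(g)$ by Montel's theorem, and in Lemma~\ref{l:y1y2}-\ref{l:y1y2-2} the escaping point $y_{2}$ is assumed to lie in the unbounded component of $\CC \setminus \overline{D}$, well away from the Julia component containing $B_{1}$ --- neither situation covers a point sitting on $B_{1}$ itself. Nor can you repair it by composing: if you first arrange $g(y_{2})\in F_{\infty }(G_{\tau })$ you may already have $g(y_{1})\in F_{\infty }(G_{\tau })$, which no post-composition can undo (each $h\in G_{\tau }$ maps $F_{\infty }(G_{\tau })$ into itself), and if you first arrange $g(y_{1})\in \mbox{int}(\hat{K}(G_{\tau }))$ the image $g(y_{2})$ may land in $\hat{K}(G_{\tau })$, after which no element reaches $F_{\infty }(G_{\tau })$. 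Your auxiliary claim that a Fatou component adjacent to $B_{1}$ from outside must be doubly connected or equal to $F_{\infty }(G_{\tau })$ is also false in general: $B_{1}$ lies in a component $B_{1}'$ of $J(G_{\tau })$ which may have bounded complementary ``bulbs'' (other than the one containing $A$) filled with bounded simply connected Fatou components, and these are not excluded by the total ordering of $\mbox{Con}(J(G_{\tau }))$.

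The paper's proof avoids prescribing $y_{2}$. After shrinking $Q$ to a disk with $Q\cap B_{2}=\emptyset $, it uses \cite[Corollary 3.1]{HM} to choose $g\in G_{\tau }$ with $J(g)\cap Q\neq \emptyset $; connectedness of $J(g)$ and $J(g)\cap A=\emptyset $ force $J(g)$ to lie outside $A$, so $J(g)\cap J_{\min }(G_{\tau })=\emptyset $, and by \cite[Theorem 2.20-4,5]{SdpbpI} $g$ has an attracting fixed point $z_{g}\in \mbox{int}(\hat{K}(G_{\tau }))$ with $A\subset \mbox{int}(K(g))$, so $g^{n}(y_{1})$ enters $\mbox{int}(\hat{K}(G_{\tau }))$ for any $y_{1}\in A$. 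The escaping point is then \emph{chosen inside $Q$}: since $J(g)=\partial F_{\infty }(g)$ meets the open set $Q$, there is $y_{2}\in Q\cap F_{\infty }(g)$ with $g^{n}(y_{2})\in F_{\infty }(G_{\tau })$ eventually. Only then comes the step you describe correctly: thicken the word $g^{m}$ to a positive-measure set $W$ of sequences and combine with Lemma~\ref{l:y1y2}-\ref{l:y1y2-1} to get $T_{\infty ,\tau }(y_{1})<T_{\infty ,\tau }(y_{2})$. So your final assembly is right, but the mechanism producing the pair $(y_{1},y_{2})$ --- choosing $g$ via density of the Julia sets $J(g)$ in $J(G_{\tau })$ inside $Q$ and letting $y_{2}$ be a nearby point of $F_{\infty }(g)\cap Q$ --- is the missing idea, and without it the outer-boundary case does not go through.
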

\begin{proof}
Let $B_{1}$ and $B_{2}$ be the two connected components of $\partial A.$ 
Let $B_{2}<_{s}B_{1}.$ If $Q\cap B_{2}\neq \emptyset $, 
then by Lemma~\ref{p:fmcnotconst}-\ref{p:fmcnotconst1} and 
\cite[Theorem 2.20-1,5]{SdpbpI}, $T_{\infty ,\tau }|_{Q}$ is not constant. 
Therefore we may assume $Q\cap B_{1}\neq \emptyset .$ We may also assume that 
$Q$ is a disk and 
$Q\cap B_{2}=\emptyset .$ Since $Q\cap J(G_{\tau })\neq \emptyset $, 
by \cite[Corollary 3.1]{HM} there exists an element $g\in G_{\tau }$ such that 
$J(g)\cap Q\neq \emptyset .$ Since $J_{\min }(G_{\tau })\leq _{s}B_{2}$, 
$J(g)\cap J_{\min }(G_{\tau })=\emptyset .$ 
By \cite[Theorem 2.20-4,5]{SdpbpI}, it follows that $J(g)$ is a quasicircle and 
there exists an attracting fixed point $z_{g}\in \mbox{int}(\hat{K}(G_{\tau }))$ 
of $g$. By \cite[Theorem 2.20-2]{SdpbpI},
 $\partial \hat{K}(G_{\tau })\subset J_{\min }(G_{\tau })\leq _{s}B_{2}<_{s}A$. 
Therefore $\{ z_{g}\} <_{s}A.$ 
Since $J(g)\cap A=\emptyset $, it follows that $A\subset \mbox{int}K(g).$ 
Let $y_{1}\in A$ be a point. From the above arguments, we obtain that 
there exists a number $n_{1}\in \NN $ such that for each $n\in \NN $ with $n\geq n_{1}$, 
$g^{n}(y_{1})\in \mbox{int}(\hat{K}(G_{\tau })).$ 
Moreover, since $J(g)\cap Q\neq \emptyset $, there exists a point $y_{2}\in Q$ and a number 
$n_{2}\in \NN $ such that for each $n\in \NN $ with $n\geq n_{2}$, 
$g^{n}(y_{2})\in F_{\infty }(G_{\tau }).$ 
Let $m:=\max \{ n_{1},n_{2}\} $.  
Let $\alpha _{1},\ldots ,\alpha _{p}\in \G_{\tau }$ be some elements  
such that $g^{m}=\alpha _{p}\circ \cdots \circ \alpha _{1}.$ 
 Let $W$ be a neighborhood of $(\alpha _{1},\ldots \alpha _{p})$ 
 in $\G_{\tau }^{p}$ such that 
for each $\omega =(\omega _{1}, \ldots ,\omega _{p})\in W$, 
$\omega _{p}\cdots \omega _{1}(y_{1})\in \mbox{int}(\hat{K}(G_{\tau }))$ and 
$\omega _{p}\cdots \omega _{1}(y_{2})\in F_{\infty }(G_{\tau }).$ 
Therefore, for each $\gamma \in X_{\tau }$ with $(\gamma _{1},\ldots ,\gamma _{p})\in W$, 
$\{ \gamma _{r,1}(y_{1})\} _{r\in \NN }$ is bounded and 
$\g _{r,1}(y_{2})\rightarrow \infty $ as $n\rightarrow \infty .$ 
Combining this with Lemma~\ref{l:y1y2}-\ref{l:y1y2-1}, 
we see that 
$T_{\infty ,\tau }(y_{1})<T_{\infty ,\tau }(y_{1})+\tilde{\tau }(\{ \g \in X_{\tau }\mid 
(\g _{1},\ldots ,\g _{p})\in W\} )\leq T_{\infty ,\tau }(y_{2}).$   
Therefore, $T_{\infty ,\tau }|_{Q}$ is not constant. 
Thus, we have proved our lemma.  
\end{proof}
\begin{lem}
\label{l:intT1}
Let $\tau \in {\frak M}_{1}({\cal P}).$ 
Suppose $\mbox{{\em int}}(\hat{K}(G_{\tau }))\neq \emptyset .$ 
Then we have the following. 
\begin{enumerate}
\item \label{l:intT1-1}
$\mbox{{\em int}}(T_{\infty, \tau }^{-1}(\{ 1\}))\subset F(G_{\tau }).$ 
\item \label{l:intT1-2}
If, in addition to the assumption of our lemma, $\infty \in F(G_{\tau })$, 
then for each open subset $Q$ of $\CCI $ with $Q\cap \partial F_{\infty }(G_{\tau })\neq \emptyset $, 
$T_{\infty ,\tau }|_{Q}$ is not constant. 
\end{enumerate}
\end{lem}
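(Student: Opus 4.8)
### Proof proposal for Lemma~\ref{l:intT1}

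The plan is to prove statement~\ref{l:intT1-1} first, as statement~\ref{l:intT1-2} will follow from it together with the non-constancy machinery already developed in Proposition~\ref{p:fmcnotconst} and Lemmas~\ref{l:y1y2}, \ref{l:outernc}.

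\textbf{Statement \ref{l:intT1-1}.} I would argue by contraposition: suppose $z_{0}\in \mbox{int}(T_{\infty ,\tau }^{-1}(\{ 1\} ))$ but $z_{0}\in J(G_{\tau })$, and derive a contradiction. Let $V$ be an open neighborhood of $z_{0}$ contained in $T_{\infty ,\tau }^{-1}(\{ 1\} )$. Since $z_{0}\in J(G_{\tau })$ and $V\cap J(G_{\tau })\neq \emptyset $, by \cite[Corollary 3.1]{HM} there exists $g\in G_{\tau }$ with $J(g)\cap V\neq \emptyset $. Because $\mbox{int}(\hat{K}(G_{\tau }))\neq \emptyset $ and $\bigcup _{n}g^{n}(V)$ would omit points of $\mbox{int}(\hat{K}(G_{\tau }))$ if it stayed in $F_{\infty }(g)$ — but in fact $J(g)\cap V\neq \emptyset $ so $V$ meets $K(g)$ and, by Montel's theorem applied near a point of $J(g)\cap V$ (the forward orbit of any such neighborhood is dense in $J(g)$, hence meets the bounded set $\hat{K}(G_{\tau })$), there exist $y_{1}\in V$ and $l\in \NN $ with $g^{l}(y_{1})\in \mbox{int}(\hat{K}(G_{\tau }))$. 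Write $g^{l}=h_{n}\circ \cdots \circ h_{1}$ with $h_{j}\in \G _{\tau }$; by continuity there is a neighborhood $W$ of $(h_{1},\ldots ,h_{n})$ in $\G _{\tau }^{n}$ with $\omega _{n}\cdots \omega _{1}(y_{1})\in \mbox{int}(\hat{K}(G_{\tau }))$ for all $\omega \in W$. Then for every $\gamma \in X_{\tau }$ with $(\gamma _{1},\ldots ,\gamma _{n})\in W$, since $\mbox{int}(\hat{K}(G_{\tau }))$ is forward-invariant and bounded in $\CC $, the orbit $\{ \gamma _{r,1}(y_{1})\} _{r}$ stays bounded, so $\gamma $ does not tend to $\infty $ from $y_{1}$. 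Hence $T_{\infty ,\tau }(y_{1})\leq 1-\tilde{\tau }(\{ \gamma \mid (\gamma _{1},\ldots ,\gamma _{n})\in W\} )<1$, since $\tilde{\tau }$ of that cylinder is positive as $h_{j}\in \mbox{supp}\,\tau $. This contradicts $y_{1}\in V\subset T_{\infty ,\tau }^{-1}(\{ 1\} )$, proving \ref{l:intT1-1}.

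\textbf{Statement \ref{l:intT1-2}.} Now assume also $\infty \in F(G_{\tau })$, and let $Q$ be open with $Q\cap \partial F_{\infty }(G_{\tau })\neq \emptyset $. Since $F_{\infty }(G_{\tau })\subset T_{\infty ,\tau }^{-1}(\{ 1\} )$ (every trajectory starting in $F_{\infty }$ tends to $\infty $ by \cite[Lemma 5.24]{Splms10}), if $T_{\infty ,\tau }|_{Q}$ were constant it would be $\equiv 1$ on $Q$, forcing $Q\subset \mbox{int}(T_{\infty ,\tau }^{-1}(\{ 1\} ))\subset F(G_{\tau })$ by part \ref{l:intT1-1}. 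But $\partial F_{\infty }(G_{\tau })\subset J(G_{\tau })$, so $Q$ meets $J(G_{\tau })$, a contradiction. Hence $T_{\infty ,\tau }|_{Q}$ is not constant.

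\textbf{Main obstacle.} The delicate point is extracting, from $z_{0}\in J(G_{\tau })$ and $\mbox{int}(\hat{K}(G_{\tau }))\neq \emptyset $ alone (without assuming $G_{\tau }\in {\cal G}_{dis}$ or $\mbox{supp}\,\tau $ compact in part \ref{l:intT1-1}), a single map $g\in G_{\tau }$ whose forward iterates carry a nearby point into $\mbox{int}(\hat{K}(G_{\tau }))$. The cleanest route is the Montel/normality argument: a neighborhood of a point of $J(g)$ has forward $g$-orbit that cannot omit three points, hence cannot be contained in $F_{\infty }(g)$ nor avoid the nonempty open set $\mbox{int}(\hat{K}(G_{\tau }))$; one should be careful that $\hat{K}(G_{\tau })$ — not merely $K(g)$ — is the target, which is automatic since $\hat{K}(G_{\tau })$ is $G_{\tau }$-forward-invariant and we only need its interior to be nonempty to run the cylinder-positivity step. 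I expect this normal-families step to be where the real work lies; the rest is the by-now-standard perturbation-of-finite-words argument used repeatedly in Proposition~\ref{p:fmcnotconst} and Lemma~\ref{l:y1y2}.
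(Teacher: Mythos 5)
Your proposal is correct, and your part~\ref{l:intT1-2} is the same argument as the paper's ($T_{\infty ,\tau }\equiv 1$ on $F_{\infty }(G_{\tau })$ by \cite[Lemma 5.24]{Splms10}, combined with part~\ref{l:intT1-1} and $\partial F_{\infty }(G_{\tau })\subset J(G_{\tau })$). For part~\ref{l:intT1-1} you take a genuinely different, though closely related, route. The paper first proves the claim that no point $z_{0}$ with $T_{\infty ,\tau }(z_{0})=1$ is mapped by any $g\in G_{\tau }$ into $\mbox{int}(\hat{K}(G_{\tau }))$ (by exactly the cylinder-positivity argument you use), so that $G_{\tau }$ omits the nonempty open set $\mbox{int}(\hat{K}(G_{\tau }))$ on $\mbox{int}(T_{\infty ,\tau }^{-1}(\{ 1\} ))$, and then Montel's theorem gives normality of the whole family there, i.e. inclusion in $F(G_{\tau })$. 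You instead argue by contradiction at a hypothetical point of $\mbox{int}(T_{\infty ,\tau }^{-1}(\{ 1\} ))\cap J(G_{\tau })$: you invoke $J(G_{\tau })=\overline{\bigcup _{g\in G_{\tau }}J(g)}$ (\cite[Corollary 3.1]{HM}) to find a single $g$ whose Julia set meets the set, use the blow-up property of $J(g)$ (Montel applied to the iterates of $g$) to carry a point $y_{1}$ of the set into $\mbox{int}(\hat{K}(G_{\tau }))$, and then run the cylinder argument to contradict $T_{\infty ,\tau }(y_{1})=1$. Both proofs rest on Montel plus the same perturbation-of-finite-words step; the paper's version needs no appeal to \cite[Corollary 3.1]{HM} and yields the slightly stronger intermediate statement about all of $T_{\infty ,\tau }^{-1}(\{ 1\} )$ (not just its interior), while yours localizes the work to a single element $g$, in the style of Proposition~\ref{p:fmcnotconst}, at the price of that extra input. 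One small slip, which does not affect correctness: the parenthetical justification ``the forward orbit of any such neighborhood is dense in $J(g)$, hence meets the bounded set $\hat{K}(G_{\tau })$'' is not the right reason (density in $J(g)$ says nothing about reaching $\mbox{int}(\hat{K}(G_{\tau }))$); the correct reason is the one you give in your final paragraph, namely that $\bigcup _{n}g^{n}$ of a neighborhood of a point of $J(g)$ omits at most two points of $\CCI $ and therefore must meet the nonempty open set $\mbox{int}(\hat{K}(G_{\tau }))$.
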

\begin{proof}
We first prove statement~\ref{l:intT1-1}. 
We prove the following claim.\\ 
Claim. For each $z_{0}\in T_{\infty ,\tau }^{-1}(\{ 1\} )$, 
there exists no $g\in G_{\tau }$ with $g(z_{0})\in \mbox{int}(\hat{K}(G_{\tau })).$

To prove this claim, let $z_{0}\in T_{\infty ,\tau }^{-1}(\{ 1\} )$ and 
suppose there exists an element $g\in G_{\tau }$ with $g(z_{0})\in \mbox{int}(\hat{K}(G_{\tau })).$ 
Let $h_{1},\ldots ,h_{m}\in \G_{\tau }$ be some elements with 
$g=h_{m}\circ \cdots \circ h_{1}.$ 
Then there exists a neighborhood $W$ of $(h_{1},\ldots ,h_{m})$ in $\G_{\tau }^{m}$ 
such that for each $\omega =(\omega _{1},\ldots ,\omega _{m})\in W$, 
$\omega _{m}\cdots \omega _{1}(z_{0})\in \mbox{int}(\hat{K}(G_{\tau })).$ 
Therefore for each $\g \in X_{\tau }$ with $(\g _{1},\ldots ,\g _{m})\in W$, 
$\{ \g _{n,1}(z_{0})\} _{n\in \NN }$ is bounded. 
Thus 
$T_{\infty ,\tau }(z_{0})\leq 1-\tilde{\tau }(\{ \g \in X_{\tau }\mid (\g _{1},\ldots ,\g _{m})\in W\} )<1.$ 
This is a contradiction. Hence we have proved the claim.

 From this claim, $G_{\tau }(\mbox{int}(T_{\infty ,\tau }^{-1}(\{ 1\} )))\subset \CCI \setminus \mbox{int}(\hat{K}(G_{\tau })).$ 
Therefore $ \mbox{int}(T_{\infty ,\tau }^{-1}(\{ 1\} ))\subset F(G_{\tau }).$ 
Thus we have proved statement~\ref{l:intT1-1}. 

 We now prove statement~\ref{l:intT1-2}. 
 Suppose $\infty \in F(G_{\tau }).$ Let $Q$ be an open subset of $\CCI $ with $Q\cap \partial F_{\infty }(G_{\tau })\neq \emptyset .$ 
By \cite[Lemma 5.24]{Splms10}, $T_{\infty ,\tau }|_{F_{\infty }(G_{\tau })}\equiv 1$. 
Combining this with statement~\ref{l:intT1-1}, we obtain that $T_{\infty ,\tau }|_{Q}$ is not constant.  
Thus we have proved statement~\ref{l:intT1-2}. 
\end{proof}
\begin{lem}
\label{l:ksetnc}
Let $\tau \in {\frak M}_{1}({\cal P}).$ Suppose $\infty \in F(G_{\tau }).$ 
Then $\mbox{{\em int}}(T_{\infty ,\tau }^{-1}(\{ 0\})\subset F(G_{\tau })$, and 
for each open subset $Q$ of $\CCI $ with $Q\cap \partial \hat{K}(G_{\tau })\neq \emptyset $, 
$T_{\infty ,\tau }|_{Q}$ is not constant. 
\end{lem}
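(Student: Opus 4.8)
The plan is to handle the two assertions in turn, using the machinery already developed for $F_\infty$ in Lemma~\ref{l:intT1}. First I would prove that $\text{int}(T_{\infty ,\tau }^{-1}(\{0\}))\subset F(G_\tau )$ by the same ``spreading'' argument used in the claim of Lemma~\ref{l:intT1}-\ref{l:intT1-1}, but now near $\infty$ instead of near $\hat K(G_\tau )$. Concretely: since $\infty \in F(G_\tau )$, the point $\infty$ has an open neighborhood $N$ contained in $F_\infty (G_\tau )$ on which, by \cite[Lemma 5.24]{Splms10}, every sequence $\gamma _{n,1}\to \infty$; hence $T_{\infty ,\tau }|_N\equiv 1$. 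Now suppose $z_0\in \text{int}(T_{\infty ,\tau }^{-1}(\{0\}))$ and that some $g=h_m\circ \cdots \circ h_1\in G_\tau$ (with $h_i\in \Gamma _\tau$) satisfies $g(z_0)\in F_\infty (G_\tau )$. Choosing a neighborhood $W$ of $(h_1,\ldots ,h_m)$ in $\Gamma _\tau ^m$ such that $\omega _m\cdots \omega _1(z_0)\in F_\infty (G_\tau )$ for all $\omega \in W$, and using again \cite[Lemma 5.24]{Splms10} to conclude that for such $\omega$ the further iterates tend to $\infty$, we get $T_{\infty ,\tau }(z_0)\geq \tilde\tau (\{\gamma \in X_\tau \mid (\gamma _1,\ldots ,\gamma _m)\in W\})>0$, contradicting $z_0\in T_{\infty ,\tau }^{-1}(\{0\})$. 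Therefore no element of $G_\tau$ maps $z_0$ into $F_\infty (G_\tau )$, i.e.\ $G_\tau (z_0)\subset \CC \setminus F_\infty (G_\tau )$; since $z_0$ ranges over an open set, $G_\tau (\text{int}(T_{\infty ,\tau }^{-1}(\{0\})))$ omits the open set $F_\infty (G_\tau )$ (which contains $\infty$ and hence has nonempty interior meeting every orbit closure nontrivially in the relevant sense), so $G_\tau$ is normal there and $\text{int}(T_{\infty ,\tau }^{-1}(\{0\}))\subset F(G_\tau )$. Here one should be a little careful: omitting a fixed open set of positive capacity is enough for normality by Montel, so this step is clean.

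Next I would prove the non-constancy statement: if $Q$ is open with $Q\cap \partial \hat K(G_\tau )\neq \emptyset$, then $T_{\infty ,\tau }|_Q$ is not constant. Pick $z_1\in Q\cap \partial \hat K(G_\tau )$. Since $\partial \hat K(G_\tau )\subset J(G_\tau )$, any constant value $c$ of $T_{\infty ,\tau }$ on $Q$ would force $c\in (0,1)$ to be false at one end: indeed $z_1\in \overline{\hat K(G_\tau )}$ and points of $Q$ just outside $\hat K(G_\tau )$ escape to $\infty$ under suitable $\gamma$. More precisely, on the one hand $z_1\in \partial \hat K(G_\tau )$ means every neighborhood of $z_1$ contains points $y$ with $G_\tau (\{y\})$ unbounded, i.e.\ there is $g\in G_\tau$ with $g(y)\in F_\infty (G_\tau )$, so by the $W$-neighborhood argument $T_{\infty ,\tau }(y)>0$; on the other hand, since $\hat K(G_\tau )$ is forward invariant and $z_1\in \partial \hat K(G_\tau )$, I claim $T_{\infty ,\tau }(z_1)<1$ — indeed a point $z_1$ that lies in the closure of $\hat K(G_\tau )$ cannot have escape probability one, because there are elements of $G_\tau$ (coming from the structure of $\hat K(G_\tau )$, e.g.\ using \cite[Theorem 2.1]{SdpbpI} or \cite[Corollary 3.1]{HM}, producing $g$ with $J(g)$ meeting $Q$ and an attracting fixed point in $\text{int}(\hat K(G_\tau ))$) that map a nearby point of $Q$ into $\text{int}(\hat K(G_\tau ))$ with positive probability. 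Comparing the two nearby points $y$ (with $T_{\infty ,\tau }(y)>0$) and a point mapped into $\hat K(G_\tau )$ (with $T_{\infty ,\tau }<1$) rules out $T_{\infty ,\tau }|_Q$ being the constant $0$ in the first place and the constant $1$ in the second; for intermediate constants one uses Lemma~\ref{l:y1y2} / Lemma~\ref{l:fmcmono} monotonicity to produce a strict inequality between two points of $Q$. The cleanest route is probably: if $Q\cap \partial \hat K(G_\tau )\neq \emptyset$ then $Q$ meets both $\hat K(G_\tau )$ (in its interior, which is nonempty and where $T_{\infty ,\tau }\equiv 0$ by \cite[Lemma 5.24]{Splms10}) — wait, this needs $Q$ to actually meet $\text{int}(\hat K(G_\tau ))$, which it does since $z_1\in \partial \hat K(G_\tau )$ and, by \cite[Theorem 2.20]{SdpbpI}, $\text{int}(\hat K(G_\tau ))$ accumulates at $\partial \hat K(G_\tau )$ — and $Q$ also meets $\CC \setminus \hat K(G_\tau )$; on the interior part $T_{\infty ,\tau }=0$ while at points of $Q\setminus \hat K(G_\tau )$ one shows $T_{\infty ,\tau }>0$ exactly as above. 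Hence $T_{\infty ,\tau }|_Q$ is not constant.

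The main obstacle I anticipate is the second part: showing that a point $y\in Q$ just outside $\hat K(G_\tau )$ genuinely has $T_{\infty ,\tau }(y)>0$, and symmetrically that $Q$ really does meet $\text{int}(\hat K(G_\tau ))$. The first needs the observation that $y\notin \hat K(G_\tau )$ gives $g\in G_\tau$ with $g(y)\in F_\infty (G_\tau )$, then the open-neighborhood-in-$\Gamma _\tau ^n$ trick (as in Lemmas~\ref{l:y1pos}, \ref{l:intT1}) to get a set of positive $\tilde\tau$-measure of itineraries escaping; the second needs \cite[Theorem 2.20-2]{SdpbpI} that $\partial \hat K(G_\tau )\subset J_{\min}(G_\tau )$ together with \cite[Theorem 2.20-1,5]{SdpbpI} on the structure of $\text{int}(\hat K(G_\tau ))$, plus possibly invoking that $G_\tau \in {\cal G}_{dis}$ is actually needed to guarantee $\text{int}(\hat K(G_\tau ))\neq \emptyset$ — so in fact, reading the statement carefully, the hypothesis ``$\infty \in F(G_\tau )$'' alone may be too weak and one likely wants to also assume (or deduce) $\text{int}(\hat K(G_\tau ))\neq \emptyset$, which is the case in all intended applications (Theorem~\ref{randomthm1}) via Lemma~\ref{l:pbjkerem}. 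Modulo that caveat, the argument is a direct recombination of Lemmas~\ref{l:fmcmono}, \ref{l:y1y2}, \ref{l:y1pos}, and \ref{l:intT1} together with \cite[Lemma 5.24]{Splms10} and \cite[Theorem 2.20]{SdpbpI}.
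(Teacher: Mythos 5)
Your argument for the first assertion is a faithful transposition of the claim in the proof of Lemma~\ref{l:intT1}-\ref{l:intT1-1} (replace $\mathrm{int}(\hat{K}(G_\tau))$ by $F_\infty(G_\tau)$ and $\{1\}$ by $\{0\}$), and it is correct; the paper's one-line proof is exactly this transposition.

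For the second assertion the core comparison you want — $T_{\infty,\tau}=0$ at a point of $Q$ in $\hat{K}(G_\tau)$, $T_{\infty,\tau}>0$ at a nearby point of $Q$ outside $\hat{K}(G_\tau)$ — is the right one, but the way you reach it contains several errors. You do not need $Q$ to meet $\mathrm{int}(\hat{K}(G_\tau))$, and it generally will not (the interior can be empty). What you need is that $\hat{K}(G_\tau)$ is \emph{closed}, and this already follows from $\infty\in F(G_\tau)$: if $G_\tau(\{z_0\})$ is unbounded, some $g\in G_\tau$ sends $z_0$ into the open set $F_\infty(G_\tau)$, hence sends a whole neighborhood of $z_0$ into $F_\infty(G_\tau)$, and every such image point has unbounded $G_\tau$-orbit by \cite[Lemma 5.24]{Splms10}; so $\CC\setminus\hat{K}(G_\tau)$ is open. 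Then $z_1\in Q\cap\partial\hat{K}(G_\tau)\subset\hat{K}(G_\tau)$, and $T_{\infty,\tau}(z_1)=0$ directly from the \emph{definition} of $\hat{K}(G_\tau)$ (bounded orbit $\Rightarrow$ no itinerary escapes) — not from \cite[Lemma 5.24]{Splms10}, which concerns $F_\infty(G_\tau)$. Next, since $z_1\in\partial\hat{K}(G_\tau)$, there is $y\in Q$ with $y\notin\hat{K}(G_\tau)$, and your $W$-neighborhood argument gives $T_{\infty,\tau}(y)>0$; this already yields non-constancy of $T_{\infty,\tau}|_Q$, with no need for the monotonicity lemmas, the attracting-fixed-point digression, or the case analysis on the value of the putative constant.

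Finally, two of the references you reach for are out of scope here: \cite[Theorem~2.20]{SdpbpI} is stated for $G\in{\cal G}_{dis}$, which is not assumed, and your closing worry that the hypothesis ``$\infty\in F(G_\tau)$'' may be too weak is unfounded — as the argument above shows, it is exactly what is needed, and no appeal to ${\cal G}_{dis}$ or to $\mathrm{int}(\hat{K}(G_\tau))\neq\emptyset$ is required.
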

\begin{proof}
We can prove this lemma in the same way as that in the proof of Lemma~\ref{l:intT1}. 
\end{proof}
\begin{thm}
\label{t:pbgen} 
Let $\tau \in {\frak M}_{1}({\cal P})$ (we do not assume that {\em supp}$\,\tau$ is compact). 
Suppose $G_{\tau }\in {\cal G}_{dis}.$ 
Then statements ~\ref{randomthm1-1}, \ref{randomthm1m}, \ref{randomthm1tca} and \ref{randomthm1q} in Theorem~\ref{randomthm1} 
hold. 
\end{thm}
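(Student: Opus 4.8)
The plan is to assemble Theorem~\ref{t:pbgen} from the lemmas already proved in this subsection, since every ingredient has essentially been isolated. Statement~\ref{randomthm1-1} — that $T_{\infty ,\tau}$ is locally constant on $F(G_{\tau})$ — does not require compactness of $\operatorname{supp}\tau$; it follows from \cite[Lemma 5.27]{Splms10} (quoted in the text: if $\infty \in F(G_{\tau})$ then $T_{\infty ,\tau}$ is constant on each connected component of $F(G_{\tau})$), together with the fact that $\infty \in F(G_{\tau})$ for $G_{\tau}\in\mathcal{G}_{dis}$ by \cite[Theorem 2.20-1]{SdpbpI}. So first I would record the constants $C_U$ this way.

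Next, for statement~\ref{randomthm1m} (monotonicity), part (b) — that $J_1<_sJ_2$ implies $\sup_{J_1}T_{\infty,\tau}\le\inf_{J_2}T_{\infty,\tau}$ — is exactly Lemma~\ref{l:j1j2}. Part (a), that $A_1<_sA_2$ for doubly connected components implies $C_{A_1}<C_{A_2}$, is obtained from Lemma~\ref{l:y1y2}: pick $y_1\in A_1$ and $y_2\in A_2$; since $A_1<_sA_2$, the point $y_2$ lies in the unbounded component of $\CC\setminus A_1$, and $A_2$ is doubly connected, so Lemma~\ref{l:y1y2}-\ref{l:y1y2-2} gives $T_{\infty,\tau}(y_1)<T_{\infty,\tau}(y_2)$, i.e. $C_{A_1}<C_{A_2}$; mutual distinctness is then immediate. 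For statement~\ref{randomthm1tca}, I would note $\hat K(G_{\tau})\subset$ the bounded component of $\CC\setminus A$ and $F_\infty(G_{\tau})\subset$ the unbounded component (these follow from $\partial\hat K(G_{\tau})\subset J_{\min}(G_{\tau})\le_s A$, \cite[Theorem 2.20-2]{SdpbpI}, and Lemma~\ref{l:pbkg}), then apply Lemma~\ref{l:fmcmono} and Lemma~\ref{l:y1y2} to sandwich $C_A$ strictly between $T_{\infty,\tau}|_{\hat K(G_{\tau})}$ and $T_{\infty,\tau}|_{F_\infty(G_{\tau})}$; the endpoint values $0$ and $1$ come from \cite[Lemma 5.24]{Splms10} applied to $F_\infty(G_{\tau})$ and from the definition of $\hat K(G_{\tau})$ (no $\g$ with $\g_{n,1}(z)\to\infty$ if $z\in\hat K(G_{\tau})$, since $G_{\tau}(\{z\})$ is bounded). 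Here I would rely on Lemma~\ref{l:y1pos}/Corollary~\ref{c:pby1pos} to guarantee $C_A>0$ even when $\operatorname{supp}\tau$ is non-compact.

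Finally, statement~\ref{randomthm1q}: an open set $Q$ meeting $\bigcup_{A\in\mathcal A}\partial A\cup\partial F_\infty(G_{\tau})\cup\partial\hat K(G_{\tau})$ has $T_{\infty,\tau}|_Q$ non-constant. I would split into three cases according to which of the three pieces $Q$ meets. If $Q\cap\partial A\neq\emptyset$ for some doubly connected $A$, apply Lemma~\ref{l:outernc}. If $Q\cap\partial F_\infty(G_{\tau})\neq\emptyset$, apply Lemma~\ref{l:intT1}-\ref{l:intT1-2} (noting $\operatorname{int}(\hat K(G_{\tau}))\neq\emptyset$ by \cite[Theorem 2.20-5]{SdpbpI}, which is the hypothesis needed). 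If $Q\cap\partial\hat K(G_{\tau})\neq\emptyset$, apply Lemma~\ref{l:ksetnc}. In all cases $\infty\in F(G_{\tau})$ is available. The one point requiring a little care — and the main obstacle — is handling the \emph{non-compact} $\operatorname{supp}\tau$ case uniformly: several of the cited \cite{Splms10} results are stated for $\tau\in\mathfrak{M}_{1,c}(\mathcal P)$, but the lemmas I am chaining (Lemmas~\ref{l:fmcmono}, \ref{l:y1y2}, \ref{l:j1j2}, \ref{l:y1pos}, \ref{l:outernc}, \ref{l:intT1}, \ref{l:ksetnc}) were all stated only assuming $\tau\in\mathfrak{M}_1(\mathcal P)$ with $G_{\tau}\in\mathcal{G}_{dis}$, so I must double-check that the only inputs they invoke — \cite[Lemma 5.24]{Splms10} (a maximum-principle statement valid without compactness), \cite[Corollary 3.1]{HM}, and the $\mathcal{G}_{dis}$-structure results of \cite{SdpbpI} — indeed hold in that generality. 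Granting that, the theorem is just the conjunction of the lemmas.
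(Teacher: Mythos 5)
Your proposal is correct and follows essentially the same route as the paper: the paper's proof likewise records $\infty\in F(G_{\tau })$ and $\mbox{int}(\hat{K}(G_{\tau }))\neq \emptyset $ via \cite[Theorem 2.20-1,5]{SdpbpI}, gets statement~\ref{randomthm1-1} from \cite[Lemma 5.27]{Splms10}, statement~\ref{randomthm1m} from Lemmas~\ref{l:y1y2} and \ref{l:j1j2}, statement~\ref{randomthm1tca} from Corollary~\ref{c:pby1pos} and Lemma~\ref{l:y1y2}-\ref{l:y1y2-2}, and statement~\ref{randomthm1q} from Lemmas~\ref{l:outernc}, \ref{l:intT1} and \ref{l:ksetnc}. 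Your extra caution about the non-compact support case matches the setup, since those lemmas were indeed stated for general $\tau \in {\frak M}_{1}({\cal P})$.
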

\begin{proof}
By \cite[Theorem 2.20-1,5]{SdpbpI}, $\infty \in F(G_{\tau })$ and $\mbox{int}(\hat{K}(G_{\tau }))\neq \emptyset .$  
By \cite[Lemma 5.27]{Splms10}, 
statement~\ref{randomthm1-1} holds. 
Statement~\ref{randomthm1m} follows from Lemmas~\ref{l:y1y2} and \ref{l:j1j2}.  
Statement~\ref{randomthm1tca} follows from Corollary~\ref{c:pby1pos} and Lemma~\ref{l:y1y2}-\ref{l:y1y2-2}. 
Statement~\ref{randomthm1q} follows from Lemmas~\ref{l:outernc}, \ref{l:intT1} and \ref{l:ksetnc}. 
%
\end{proof}

We now prove Theorem~\ref{randomthm1}.\\ 
{\bf Proof of Theorem~\ref{randomthm1}:}
Theorem~\ref{randomthm1} follows from Lemma~\ref{l:fmmmu} and Theorem~\ref{t:pbgen}. 
\qed 

\subsection{Proof of Theorem~\ref{t:hnondiffp}}
In this subsection, we prove Theorem~\ref{t:hnondiffp}. 
We need several lemmas.

\begin{lem}
\label{l:hndp1}
Under the assumptions of Theorem~\ref{t:hnondiffp}, statement~\ref{t:hnondiffp0} in  
Theorem~\ref{t:hnondiffp} holds. 
\end{lem}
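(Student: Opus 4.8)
The plan is to verify statement~\ref{t:hnondiffp0} of Theorem~\ref{t:hnondiffp} piece by piece, leaning heavily on Theorem~\ref{randomthm1} and on known facts about $G\in{\cal G}_{dis}$ together with the thermodynamic description of $\mu$ and $\lambda$. First I would observe that since $\Gamma=\{h_1,\dots,h_m\}$ is finite, $\mathrm{supp}\,\tau=\Gamma$ is compact, so $\tau\in{\frak M}_{1,c}({\cal P})$ and $G_\tau=G$. The hypothesis $h_i^{-1}(J(G))\cap h_j^{-1}(J(G))=\emptyset$ for $i\neq j$ combined with the backward self-similarity $J(G)=\bigcup_{j=1}^m h_j^{-1}(J(G))$ (stated in the Background section) forces $J(G)$ to be disconnected (it is a disjoint union of $m\geq 2$ nonempty compact sets), hence $G\in{\cal G}_{dis}$. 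Then $J_{\ker}(G)=\emptyset$ is exactly Lemma~\ref{l:pbjkerem}, and all statements of Theorem~\ref{randomthm1} apply to $\tau$.

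Next I would establish the characterization $J(G)=\{z\mid T_{\infty,\tau}|_U\text{ not constant for every neighborhood }U\text{ of }z\}$ and $\mathrm{int}(J(G))=\emptyset$. One inclusion is immediate: by Theorem~\ref{randomthm1}\,\ref{randomthm1-1}, $T_{\infty,\tau}$ is locally constant on $F(G)$, so any point of $F(G)$ fails the condition. For the reverse inclusion I would use Theorem~\ref{randomthm1}\,\ref{randomthm1q}: it suffices to show that every point of $J(G)$ lies in $\bigcup_{A\in{\cal A}}\partial A\cup\partial F_\infty(G)\cup\partial\hat K(G)$, or rather that the set of such points is dense in $J(G)$ and then to propagate non-constancy to all of $J(G)$ using the disjointness hypothesis (pulling back a doubly connected component through words $h_{i_1}\cdots h_{i_k}$ produces, near any point of $J(G)$, a homeomorphic copy of a configuration where $T_{\infty,\tau}$ varies, because $T_{\infty,\tau}$ satisfies $M_\tau(T_{\infty,\tau})=T_{\infty,\tau}$ and the branches are separated). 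The emptiness of $\mathrm{int}(J(G))$ should follow because $\mathrm{int}(J(G))\subset F(G)$ would contradict $T_{\infty,\tau}(J(G))=[0,1]$ together with local constancy — more precisely, if $\mathrm{int}(J(G))\neq\emptyset$ one would need $T_{\infty,\tau}$ non-constant on every subneighborhood, but one can also invoke that $\mathrm{int}(\hat K(G))\neq\emptyset$ and $\mathrm{int}(J(G))$ being forward-mapped into $F(G)$ eventually; I would look for the cleanest route, likely via $J_{\ker}(G)=\emptyset$ forcing every point of $J(G)$ to be mapped into $F(G)$ by some $g\in G$, so $\mathrm{int}(J(G))$ would be mapped by an open map into $F(G)$, which is impossible since $J(G)$ is backward invariant.

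For $\mathrm{supp}\,\lambda=J(G)$ and $\lambda(\{z\})=0$: here $\lambda=(\pi_{\CCI})_*(\mu)=\int_{\Gamma^{\NN}}\mu_\gamma\,d\tilde\tau(\gamma)$ by the formula in Definition~\ref{d:dfu}, where $\mu_\gamma=dd^cG_\gamma$ is the fiberwise equilibrium measure with $\mathrm{supp}\,\mu_\gamma=J_\gamma$. Since $J(G)=\overline{\bigcup_{\gamma}J_\gamma}$ (this uses that $J(G)=\overline{\bigcup_{g\in G}J(g)}$ from the Background and an approximation of periodic $\gamma$), integrating over $\gamma$ gives $\mathrm{supp}\,\lambda=J(G)$. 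For the atomlessness, I would argue that each $\mu_\gamma$ is non-atomic — being the pullback-limit equilibrium measure of a sequence of polynomials of degree $\geq 2$, it has no atoms (a standard potential-theoretic fact: equilibrium measures of polynomial sequences give zero mass to points, since $\mu_\gamma(\{w\})\leq \lim (1/\deg\gamma_{n,1})\cdot(\text{multiplicity})=0$) — and then $\lambda(\{z\})=\int\mu_\gamma(\{z\})\,d\tilde\tau(\gamma)=0$.

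The main obstacle I anticipate is the topological-dynamical argument that $T_{\infty,\tau}$ is non-constant on every neighborhood of every point of $J(G)$, not merely on neighborhoods meeting the boundaries in Theorem~\ref{randomthm1}\,\ref{randomthm1q}. This requires exploiting the disjointness $h_i^{-1}(J(G))\cap h_j^{-1}(J(G))=\emptyset$ to show that the inverse-image pieces $h_{i_1}^{-1}\cdots h_{i_k}^{-1}(J(G))$ shrink to points and that each such piece carries a non-constant restriction of $T_{\infty,\tau}$, obtained by pulling back via $M_\tau$-invariance the non-constancy already known near doubly connected components of $F(G)$; controlling this inductively, and knowing that $\bigcup_{A\in{\cal A}}\partial A$ actually accumulates everywhere on $J(G)$ under the disjointness hypothesis (so that $\mathrm{int}(J(G))=\emptyset$), is the delicate part. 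I would handle it by first proving $\mathrm{int}(J(G))=\emptyset$ directly from $J_{\ker}(G)=\emptyset$ and openness of the maps $g\in G$, then noting that between any two distinct components of $J(G)$ there is a doubly connected component of $F(G)$ (Remark~\ref{r:icjto}), and finally using backward images to see these are dense, so Theorem~\ref{randomthm1}\,\ref{randomthm1q} applies to a dense set of points and the closed set $\{z: T_{\infty,\tau}\text{ constant near }z\}$ must then be contained in $F(G)$.
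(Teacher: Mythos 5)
Your opening reduction is exactly the paper's: finiteness of $\Gamma $ gives $\tau \in {\frak M}_{1,c}({\cal P})$ and $G_{\tau }=G$, the backward self-similarity $J(G)=\bigcup _{j}h_{j}^{-1}(J(G))$ together with the disjointness hypothesis gives $G\in {\cal G}_{dis}$, and then Theorem~\ref{randomthm1} (via Lemma~\ref{l:pbjkerem}) applies. Where you diverge is in trying to prove the remaining assertions of statement~\ref{t:hnondiffp0} from scratch; the paper does not do this, but obtains them by citing \cite[Lemma 3.75]{Splms10} and \cite[Theorem 4.3, Lemma 5.1]{S3}. Your outline for supp$\,\lambda =J(G)$ and $\lambda (\{ z\} )=0$ is essentially sound (density of $\bigcup _{\g }J_{\g }$ in $J(G)$, plus non-atomicity of each $\mu _{\g }$, which follows most cleanly from continuity of $G_{\g }$ rather than your multiplicity heuristic), so that part is an acceptable, if different, route.

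The genuine gap is in the two topological claims. First, your ``cleanest route'' to $\mbox{int}(J(G))=\emptyset $ is based on a false implication: the existence of $g\in G$ with $g(z)\in F(G)$ for $z\in J(G)$ contradicts nothing, because $J(G)$ is only \emph{backward} invariant ($g^{-1}(J(G))\subset J(G)$, equivalently $g(F(G))\subset F(G)$), and forward images of Julia points may well land in $F(G)$ --- indeed $J_{\ker }(G)=\emptyset $ says precisely that \emph{every} point of $J(G)$ is mapped into $F(G)$ by some $g\in G$, so by your reasoning one would reach a ``contradiction'' without even assuming $\mbox{int}(J(G))\neq \emptyset $. In this paper the emptiness of $\mbox{int}(J(G))$ under the disjointness hypothesis really uses the fiberwise structure (compare Lemma~\ref{ncintk}, Lemma~\ref{l:mgdiscg} and Lemma~\ref{l:hndp2}, which give $J_{\g }=\bigcap _{n}\g _{n,1}^{-1}(J(G))$, so that a ball in $J(G)$ would lie in a single fiber $J_{\g }=\partial K_{\g }$, which has empty interior); it is not a soft consequence of $J_{\ker }(G)=\emptyset $. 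Second, the characterization $J(G)=\{ z\mid T_{\infty ,\tau }$ is non-constant on every neighborhood of $z\} $ is exactly the point you yourself flag as ``the delicate part'' and leave open: Theorem~\ref{randomthm1}-\ref{randomthm1q} only gives non-constancy near $\bigcup _{A}\partial A\cup \partial F_{\infty }\cup \partial \hat{K}$, and you neither prove that this set (or its backward orbit) accumulates on all of $J(G)$ nor carry out the propagation argument; note also that $\{ z: T_{\infty ,\tau }\mbox{ constant near }z\} $ is open, not closed, so the final sentence of your plan does not close the loop as stated. A correct direct argument would have to run along the lines of Proposition~\ref{p:fmcnotconst} and Lemma~\ref{l:outernc} applied near an arbitrary point of $J(G)$ (using $J(G)=\overline{\bigcup _{g\in G}J(g)}$ and finite words sending one nearby point into $\mbox{int}(\hat{K}(G))$ and another into $F_{\infty }(G)$), or else simply quote the results the paper quotes. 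As it stands, the proposal does not establish statement~\ref{t:hnondiffp0}.
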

\begin{proof}
Since $J(G)=\cup _{j=1}^{m}h_{j}^{-1}(J(G))$ (\cite[Lemma 2.4]{S3}), 
we obtain that $J(G)$ is disconnected. Thus $G\in {\cal G}_{dis}.$ 
By Theorem~\ref{randomthm1}, for the $\tau $, all statements in Theorem~\ref{randomthm1} hold. 
The rest of statement~\ref{t:hnondiffp0} follows from \cite[Lemma 3.75]{Splms10} and  
\cite[Theorem 4.3, Lemma 5.1]{S3}. 
\end{proof}
\begin{lem}
\label{l:hndp2}
Under the assumptions of Theorem~\ref{t:hnondiffp}, 
we obtain that 
{\em (1)} $u(h,p,\mu )$ is well-defined and 
$u(h,p,\mu )=\frac{-\sum _{j=1}^{m}p_{j}\log p_{j}}{\sum _{j=1}^{m}p_{j}\log (\deg (h_{j}))}$, 
{\em (2)} for $\lambda $-a.e. $z_{0}\in J(G)$, 
$\emHol(T_{\infty,\tau },z_{0})\leq u(h, p, \mu )$, and {\em (3)}  
 $\pi _{\CCI }: \tilde{J}(f)\rightarrow J(G)$ is a homeomorphism. 
\end{lem}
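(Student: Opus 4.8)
The plan is to prove the two assertions of Lemma~\ref{l:hndp2} essentially independently, since (2) is a qualitative statement about the symbolic–dynamical coding of $\tilde J(f)$ while (1) is a quantitative estimate obtained from Birkhoff's ergodic theorem applied to the maximal relative entropy measure.

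For statement (2), the key input is the disjointness hypothesis $h_i^{-1}(J(G))\cap h_j^{-1}(J(G))=\emptyset$ for $i\neq j$. First I would recall that, since $G\in{\cal G}_{dis}$ (Lemma~\ref{l:hndp1}), $J(G)$ is a compact subset of $\CC$ and $J(G)=\bigcup_{j=1}^m h_j^{-1}(J(G))$. The disjointness hypothesis makes this a \emph{disjoint} union of compact sets; iterating, for each word $(i_1,\ldots,i_n)$ the sets $(h_{i_1}\cdots h_{i_n})^{-1}(J(G))$ are pairwise disjoint and nested appropriately, and their diameters tend to zero (this uses that the generators are expanding near $J(G)$ in a suitable sense, or more simply that $G\in{\cal G}_{dis}$ together with the standard backward-contraction argument for postcritically bounded semigroups, cf. \cite{SdpbpI,S15}). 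Then for each $\gamma=(\gamma_1,\gamma_2,\ldots)\in\Gamma^{\NN}$ one has $\bigcap_{n} \gamma_1^{-1}\cdots\gamma_n^{-1}(J(G))$ is a single point, and this point is precisely the fiber $\pi_{\CCI}(\pi^{-1}(\{\gamma\})\cap\tilde J(f))$; in particular $J_\gamma=\hat J_{\gamma,\Gamma}$ is that single point and $\pi_{\CCI}|_{\tilde J(f)}$ is injective. Surjectivity onto $J(G)$ is the already-noted inclusion $\pi_{\CCI}(\tilde J(f))\subset J(G)$ combined with $J(G)=\overline{\cup_g J(g)}\subset \pi_{\CCI}(\tilde J(f))$, and continuity plus compactness of $\tilde J(f)$ gives that $\pi_{\CCI}$ is a homeomorphism. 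I would phrase this carefully and perhaps cite the analogous statement in \cite{S3} or \cite{S15} for the contraction estimate.

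For statement (1), I would use the maximal relative entropy measure $\mu$ and its projection $\lambda=(\pi_{\CCI})_*(\mu)$. The strategy is: for $\mu$-a.e.\ $(\gamma,y)$, control the ratio $|T_{\infty,\tau}(z)-T_{\infty,\tau}(y)|/|z-y|^\beta$ as $z\to y$ along preimages. The function $T_{\infty,\tau}$ satisfies the self-similarity relation coming from $M_\tau(T_{\infty,\tau})=T_{\infty,\tau}$, i.e.\ $T_{\infty,\tau}(z)=\sum_j p_j T_{\infty,\tau}(h_j(z))$; equivalently, pulling back along the inverse branch determined by $\gamma$, the oscillation of $T_{\infty,\tau}$ on the set $\gamma_1^{-1}\cdots\gamma_n^{-1}(V)$ (for a fixed small ball $V$ meeting $J(G)$) is at most a constant times $\prod_{k=1}^n p_{j_k}$ where $\gamma_k=h_{j_k}$, because at each backward step the total mass of paths that have not yet "decided" between $\infty$ and $\hat K(G_\tau)$ gets multiplied by the relevant $p_{j_k}$ — this is the standard argument that $T_{\infty,\tau}$ is locally like a Bernoulli-type singular function along the coding. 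On the other hand, by the Koebe distortion theorem and the fact that $G_\tau$ is expanding along $J(G)$ (no critical points on $J(G)$, which follows from $P^\ast(G)$ being bounded and the structure of ${\cal G}_{dis}$), the diameter of $\gamma_1^{-1}\cdots\gamma_n^{-1}(V)$ is comparable to $\prod_{k=1}^n \|D(\gamma_{k,1})_{\cdot}\|_s^{-1}$ evaluated near the relevant fiber point, up to bounded distortion. Taking logs and dividing, Birkhoff's ergodic theorem applied to $f$ and the ergodic measure $\mu$ gives, for $\mu$-a.e.\ $(\gamma,y)$,
$$\frac{1}{n}\log\Big(\prod_{k=1}^n p_{j_k}\Big)\to \int\log\tilde p\,d\mu,\qquad \frac{1}{n}\log\,\mathrm{diam}\big(\gamma_1^{-1}\cdots\gamma_n^{-1}(V)\big)\to -\int\log\|Df\|_s\,d\mu,$$
so the local Hölder exponent at $y=\pi_{\CCI}(\gamma,y)$ is at most the ratio $-\int\log\tilde p\,d\mu / \int\log\|Df\|_s\,d\mu=u(h,p,\mu)$. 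Since $\lambda=(\pi_{\CCI})_*\mu$ and (by statement (2)) $\pi_{\CCI}$ is injective on $\tilde J(f)$, the $\mu$-a.e.\ statement transfers to a $\lambda$-a.e.\ statement on $J(G)$, giving (1). The explicit evaluation $u(h,p,\mu)=-(\sum p_j\log p_j)/(\sum p_j\log\deg h_j)$ follows from $\int\log\tilde p\,d\mu=\sum p_j\log p_j$ (since $\pi_*\mu=\tilde\tau$ and $\tilde p$ depends only on the first coordinate) and $\int\log\|Df\|_s\,d\mu=\sum p_j\log\deg h_j$ (the defining entropy/Lyapunov property of the maximal relative entropy measure, cf.\ the discussion after Definition~\ref{d:dfu} and \cite{S3}), though this last identification may be deferred to the proof of statement~\ref{t:hnondiffp4} proper.

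The main obstacle I expect is making the distortion control rigorous \emph{without} assuming hyperbolicity of $G$: the inverse branches $\gamma_1^{-1}\cdots\gamma_n^{-1}$ are univalent on a neighborhood of $J(G)$ only because the postcritical set is disjoint from the relevant region, and one must be careful near $J_{\min}(G)$ and near $\partial\hat K(G)$ where the geometry of ${\cal G}_{dis}$ is delicate. Controlling the diameters and the Koebe distortion constants uniformly along $\mu$-typical fibers — rather than uniformly over all of $\Gamma^{\NN}$ — is what lets the argument run; the ergodic theorem supplies "typical" behavior and one only needs the distortion bounds to hold off a null set, which is what I would carefully set up using the geometric observations from \cite{SdpbpI} about $\mathrm{Con}(J(G))$ and the associated real affine semigroup.
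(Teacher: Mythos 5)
Your treatment of statement (2) contains a genuine error. You claim that $\bigcap_{n}\gamma_1^{-1}\cdots\gamma_n^{-1}(J(G))$ is a single point for each $\gamma\in\Gamma^{\NN}$, so that $J_\gamma=\hat J_{\gamma,\Gamma}$ collapses and injectivity of $\pi_{\CCI}|_{\tilde J(f)}$ follows by a contraction argument. This is false. Under the disjointness hypothesis the sets $\hat J_{\gamma,\Gamma}$ are exactly the \emph{connected components} of $J(G)$ (this is recorded later in the paper, Theorem~\ref{t:2gengdis}--\ref{t:2gengdis2} and Proposition~\ref{p:mgengdis}), and these components are continua, not points: for instance $\hat J_{(h_1,h_1,\ldots),\Gamma}=J(h_1)$ and $\hat J_{(h_m,h_m,\ldots),\Gamma}=J(h_m)$, the full Julia sets of polynomials of degree at least two. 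No backward-contraction of diameters to zero is available here. The injectivity of $\pi_{\CCI}$ does not come from fibre collapse; it comes directly from the disjointness of the first-level cylinders. Since $\tilde J(f)\subset\Gamma^{\NN}\times J(G)$ and $\gamma_1(\hat J_{\gamma,\Gamma})=\hat J_{\sigma(\gamma),\Gamma}\subset J(G)$, every $(\gamma,y)\in\tilde J(f)$ has $\gamma_{n,1}(y)\in J(G)$ for all $n$, hence $y\in\gamma_1^{-1}\cdots\gamma_n^{-1}(J(G))$; if $(\gamma,y)$ and $(\gamma',y)$ both lie in $\tilde J(f)$ and $n$ is the first index with $\gamma_n\neq\gamma'_n$, then $\gamma_{n-1,1}(y)$ would lie in $\gamma_n^{-1}(J(G))\cap(\gamma'_n)^{-1}(J(G))$, which is empty. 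Surjectivity is \cite[Lemma 4.5]{Splms10} and the homeomorphism then follows from compactness, as you say.

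For statement (1) your outline is conceptually in the right direction (competition between $\prod p_{j_k}$ and $\prod\|D\|_s^{-1}$, Birkhoff applied to the ergodic measure $\mu$), but you leave the crucial non-hyperbolic distortion issue at the level of a hope. The actual key, which should be stated explicitly, is the following consequence of the disjointness hypothesis: after relabelling so that $J(h_1)<_s\cdots<_sJ(h_m)$, one has $J(h_1)\subset J_{\min}(G)$ and $J(h_j)\cap J_{\min}(G)=\emptyset$ for $j\geq 2$, whence by \cite[Theorem 2.20-2,5]{SdpbpI} one gets $h_j^{-1}(J(G))\cap P(G)=\emptyset$ for every $j\geq 2$. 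The set $A$ of words eventually equal to $(1,1,1,\ldots)$ has $\tilde\tau$-measure zero, so for $\mu$-almost every $(\gamma,y)$ there are infinitely many $n_k$ with $\gamma_{n_k+1}\in\{h_2,\ldots,h_m\}$, i.e.\ infinitely many returns of the forward orbit to a compact region disjoint from $P(G)$; it is precisely at these return times that Koebe distortion applies to the inverse branches along the orbit, and the lower bound on the oscillation of $T_{\infty,\tau}$ near $z_0=\pi_{\CCI}(\gamma,y)$ (via \cite[Lemma 5.48-1]{Splms10}) can be extracted. Also note the direction: to bound $\mathrm{H\ddot{o}l}(T_{\infty,\tau},z_0)$ from above you need a \emph{lower} bound on the oscillation, not the upper bound you wrote; the Birkhoff estimate one actually needs is that $\tilde p(f^r(\gamma,y))\cdots\tilde p(\gamma,y)\,\|D(\gamma_{r,1})_y\|_s^{t}\to\infty$ for every $t>u(h,p,\mu)$, which is exactly what makes the $\limsup$ in the definition of the pointwise H\"older exponent infinite.
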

\begin{proof}Since $\pi _{\ast }(\mu )=\tilde{\tau }$, 
$\int \log \tilde{p}\ d\mu =\sum _{j=1}^{m}p_{j}\log p_{j}.$  
By \cite[Lemma 5.52]{Splms10} and that $G\in {\cal G}$, 
we obtain  that $u(h,p,\mu )$ is well-defined and 
$u(h,p,\mu )=\frac{-\sum _{j=1}p_{j}\log p_{j}}{\sum _{j=1}^{m}p_{j}\log \deg (h_{j})}.$ 

Since $h_{i}^{-1}(J(G))\cap h_{j}^{-1}(J(G))=\emptyset $ for each $(i,j)$ with $i\neq j$, 
we may assume that $J(h_{1})<_{s}\cdots <_{s}J(h_{m}).$ 
Then, by \cite[Proposition 2.24]{SdpbpI}, $J(h_{1})\subset J_{\min }(G).$ 
Since $h_{i}^{-1}(J(G))\cap h_{j}^{-1}(J(G))=\emptyset $ for each $(i,j)$ with $i\neq j$, 
since $J(G)=\bigcup _{j=1}^{m}h_{j}^{-1}(J(G))$ (\cite[Lemma 2.4]{S3}), 
and since $J(h_{j})\subset h_{j}^{-1}(J(G))$, 
it follows that for each $j\geq 2$, 
$J(h_{j})\cap J_{\min }(G)=\emptyset .$ Hence, by \cite[Theorem 2.20-2,5]{SdpbpI}, 
$h_{j}^{-1}(J(G))\cap P(G)=\emptyset $ for each $j\geq 2.$ 
Let $A:=\{ (\g , y)\in J(G)\mid \exists n\in \NN \mbox{ s.t. } \sigma ^{n}(\g )=(1,1,1,\ldots )\} .$ 
Since $\pi _{\ast }(\mu )=\tilde{\tau }$, and since $\tilde{\tau }(\{ (1,1,1,\ldots )\} )=0$, 
it follows that $\mu (A)=0.$

Since $\pi _{\CCI }:\tilde{J}(f)\rightarrow J(G)$ is surjective (\cite[Lemma 4.5]{Splms10}), 
and since $h_{i}^{-1}(J(G))\cap h_{j}^{-1}(J(G))=\emptyset $ for each $(i,j)$ with $i\neq j$, 
we obtain that 
$\pi _{\CCI }:\tilde{J}(f)\rightarrow J(G)$ is a homeomorphism. 
Thus $\lambda (\pi _{\CCI }(A))=0.$ 
Let $\{ t_{n}\} _{n=1}^{\infty }$ be a decreasing sequence of real numbers such that 
$t_{n}>u(h,p,\mu )$ for each $n\in \NN $ and such that $t_{n}\rightarrow u(h,p,\mu )$ as $n\rightarrow \infty .$   
By Birkhoff's ergodic theorem and \cite[Lemma 5.52]{Splms10}, for each $n\in \NN $ there exists a Borel subset $B_{n}$ of 
$\tilde{J}(f)$ with $\mu (B_{n})=1$ such that 
for each $(\gamma , y)\in B_{n}$, 
$\frac{1}{r}\log (\tilde{p}(f^{r}(\gamma ,y))\cdots \tilde{p}(\gamma ,y)
\| Df^{r}_{(\gamma ,y)}\| _{s}^{t_{n}})\rightarrow 
\int _{\G^{\NN }\times \CCI }\log \tilde{p}(\g, y) d\mu (\g, y) 
+\int _{\G^{\NN }\times \CCI }\log \| Df_{(\g,y)}\| _{s}^{t_{n}}d\mu (\g,y) >0$ as $r\rightarrow \infty .$  
Thus for each $(\g ,y)\in B_{n}$, 
\begin{equation}
\label{eq:hndp2pf1}
\tilde{p}(f^{r}(\g ,y))\cdots \tilde{p}(\g,y)\| D(\g _{r,1})_{y}\| _{s}^{t_{n}}\rightarrow \infty 
\mbox{ as }r\rightarrow \infty . 
\end{equation}
Let $C:= (J(G)\setminus \pi _{\CCI }(A))\cap \bigcap _{n=1}^{\infty }\pi _{\CCI }(B_{n}).$ 
Then $\lambda (C)=1.$ 
Let $z_{0}\in C.$ Let $\gamma \in \G ^{\NN }$ be the unique element 
$(\g ,z_{0})\in \tilde{J}(f).$ 
Since $z_{0}\in J(G)\setminus \pi _{\CCI }(A)$, 
 there exists a $j\in \{ 2,\ldots ,m\} $ and a 
strictly increasing sequence $\{ n_{k}\} _{k=1}^{\infty }$ of positive integers 
such that $\gamma _{n_{k}+1}=j$ for each $k\in \NN .$ 
Then for each $k\in \NN $, $\gamma _{n_{k},1}(z_{0})\in \gamma _{n_{k}+1}^{-1}(J(G))=h_{j}^{-1}(J(G)).$  
We may assume that there exists a point $z_{1}\in h_{j}^{-1}(J(G)) \subset \CCI \setminus P(G)$ such that 
$\gamma _{n_{k},1}(z_{0})\rightarrow z_{1}$ as $k\rightarrow \infty .$  
By (\ref{eq:hndp2pf1}) and \cite[Lemma 5.48-1]{Splms10}, 
we obtain that for each $n\in \NN $, 
$\limsup _{z\rightarrow z_{0}, z\neq z_{0}}\frac{|T_{\infty ,\tau }(z)-T_{\infty ,\tau }(z_{0})|}{d(z,z_{0})^{t_{n}}}=\infty .$ 
Therefore $\Hol(T_{\infty ,\tau },z_{0})\leq u(h,p,\mu ).$ 
Thus we have proved our lemma. 
\end{proof} 
\begin{df}[\cite{SdpbpI}]
\label{d:Psi}
For a polynomial $g$, we denote by 
$a(g)\in \CC $ the coefficient of 
the highest degree term of $g.$ 
We set 
RA $:=\{ ax+b\in \RR [x]\mid a,b\in \RR ,a\neq 0\} $.  
 The space RA is a semigroup 
with the semigroup operation being functional composition.
Any subsemigroup of RA will be called a {\em real affine semigroup}.
We define a map $\Psi :$  ${\cal P} \rightarrow $ RA as follows: 
For a polynomial $g\in {\cal P}$,     
we set $\Psi (g)(x):= \deg (g)x+\log | a(g)|.$ 
We remark that $\Psi (g\circ h)=\Psi (g)\circ \Psi (h).$ 
For a polynomial semigroup $G$, 
 we set $\Psi (G):= \{ \Psi (g)\mid g\in G\}  $ ($\subset $ RA). 
Thus $\Psi (G)$ is a real affine semigroup. 
We set $\hat{\RR }:= \RR \cup \{ \pm \infty \} $ endowed with 
the topology such that 
$\{ (r,+\infty ]\} _{r\in \RR }$ makes a fundamental neighborhood system of 
$+\infty $, and such that $\{ [-\infty ,r)\} _{r\in \RR }$ makes a 
fundamental neighborhood system of $-\infty .$ 
For a real affine semigroup $H$, we set \\ 
$M(H):= \overline{ \{ x\in \RR \mid \exists h\in H,  h(x)=x, |h'(x)|>1\} } 
\ (\subset \hat{\RR })
,$ 
where the closure is taken in the space $\hat{\RR }.$   
We denote by $\eta : $ RA $\rightarrow {\cal P}$ the natural embedding 
defined by $\eta (x\mapsto ax+b)=(z\mapsto az+b)$, where 
$x\in \RR $ and $z\in \CC .$ 
\end{df}

\begin{lem}
\label{l:djsum}
Under the assumptions of Theorem~\ref{t:hnondiffp}, 
we get that {\em (1)} $M(\Psi (G))$ is a Cantor set in $\RR $, 
{\em (2)} $M(\Psi (G))=\bigcup _{j=1}^{m}(\Psi (h_{j}))^{-1}(M(\Psi (G)))$, 
{\em (3)} $(\Psi (h_{i}))^{-1}(M(\Psi (G)))\cap (\Psi (h_{j}))^{-1}(M(\Psi (G)))$ 
$=\emptyset $ for each $(i,j)$ with $i\neq j$, and {\em (4)} 
$\sum _{j=1}^{m}\frac{1}{\deg (h_{j})}<1.$ 
\end{lem}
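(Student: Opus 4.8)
The plan is to transfer the whole problem to the real affine semigroup $H:=\Psi(G)$ and to run everything through the dictionary between the dynamics of $G$ and of $H$ developed in \cite{SdpbpI}. Recall from Definition~\ref{d:Psi} that $(\Psi(h_{j}))^{-1}(x)=(x-\log|a(h_{j})|)/\deg(h_{j})$ is a contracting similarity of $\RR$ of ratio $1/\deg(h_{j})\le 1/2$. The facts I would quote from \cite{SdpbpI} are: since $G\in{\cal G}$, the set $M(\Psi(G))$ is a non-empty compact subset of $\RR$; and there is a natural order-preserving correspondence between $(\mbox{{\em Con}}(J(G)),\le _{s})$ and $(M(\Psi(G)),\le )$ which is compatible with the decomposition $J(G)=\bigcup _{j=1}^{m}h_{j}^{-1}(J(G))$ and a parallel decomposition of $M(\Psi(G))$, in the sense that the components of $J(G)$ lying in $h_{j}^{-1}(J(G))$ correspond exactly to $(\Psi(h_{j}))^{-1}(M(\Psi(G)))$. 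Everything below is then essentially elementary bookkeeping plus one genuine geometric input.

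Parts (2) and (3) drop out quickly. First note that, under our hypothesis, each $h_{j}^{-1}(J(G))$ is clopen in $J(G)$: its complement in $J(G)$ equals $\bigcup _{i\neq j}h_{i}^{-1}(J(G))$, which is closed because the $h_{i}^{-1}(J(G))$ are pairwise disjoint and $J(G)=\bigcup _{i=1}^{m}h_{i}^{-1}(J(G))$ (\cite[Lemma 2.4]{S3}); hence $h_{j}^{-1}(J(G))$ is a union of whole connected components of $J(G)$, so the correspondence above makes sense on the level of components. Applying the correspondence to $J(G)=\bigcup _{j=1}^{m}h_{j}^{-1}(J(G))$ yields $M(\Psi(G))=\bigcup _{j=1}^{m}(\Psi(h_{j}))^{-1}(M(\Psi(G)))$, which is (2); and since the sets of components lying in distinct $h_{i}^{-1}(J(G))$ are disjoint, their images $(\Psi(h_{i}))^{-1}(M(\Psi(G)))$ and $(\Psi(h_{j}))^{-1}(M(\Psi(G)))$ are disjoint for $i\neq j$, which is (3).

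For (1): $M(\Psi(G))$ is compact, and by (2) it is a compact set invariant under the Hutchinson operator of the contracting IFS $\{(\Psi(h_{j}))^{-1}\}_{j=1}^{m}$, hence it equals its attractor. Since $m\ge 2$ and, by (3), the pieces $(\Psi(h_{j}))^{-1}(M(\Psi(G)))$ are pairwise disjoint (the strong separation condition), this attractor is a Cantor set: it contains no interval, since iterating (2) and (3) exhibits $M(\Psi(G))$, for every $n$, as a disjoint union of compact sets of diameter $\le 2^{-n}\mbox{diam}(M(\Psi(G)))$; and it has no isolated point, since each of its points is the limit of the compositions taken along its coding sequence (unique, by disjointness), and altering that code far out produces distinct points arbitrarily close. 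Thus (1) holds, and in particular $\mbox{diam}(M(\Psi(G)))>0$.

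Finally (4). Here I would use more of the structure theory of \cite{SdpbpI}: for $G\in{\cal G}_{dis}$ the order $(\mbox{{\em Con}}(J(G)),\le _{s})$ is total, and — after relabelling so that $J(h_{1})<_{s}\cdots <_{s}J(h_{m})$ — the blocks $h_{1}^{-1}(J(G)),\dots ,h_{m}^{-1}(J(G))$ occur \emph{consecutively} in this total order (for $m=3$ this is visible in Theorem~\ref{t:3genmain}). Transporting this through the order-preserving correspondence, the pieces $P_{j}:=(\Psi(h_{j}))^{-1}(M(\Psi(G)))$ are not merely disjoint but linearly ordered in $\RR$: every point of $P_{j}$ is below every point of $P_{j+1}$. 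Being pairwise disjoint compact sets arranged this way inside $\mbox{conv}(M(\Psi(G)))$, their convex hulls $\mbox{conv}(P_{j})$ are pairwise disjoint closed subintervals separated by at least $m-1\ge 1$ non-degenerate gaps; and since $(\Psi(h_{j}))^{-1}$ is a similarity of ratio $1/\deg(h_{j})$, $|\mbox{conv}(P_{j})|=\mbox{diam}(P_{j})=\mbox{diam}(M(\Psi(G)))/\deg(h_{j})$. Summing the lengths, $\sum _{j=1}^{m}\mbox{diam}(M(\Psi(G)))/\deg(h_{j})<\mbox{diam}(M(\Psi(G)))$, and dividing by $\mbox{diam}(M(\Psi(G)))>0$ gives $\sum _{j=1}^{m}1/\deg(h_{j})<1$, which is (4). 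The main obstacle is exactly this last geometric input — that the $m$ pieces of $M(\Psi(G))$ occur in $\RR$ in the same order as the blocks $h_{j}^{-1}(J(G))$ sit in the totally ordered $\mbox{{\em Con}}(J(G))$, i.e. that the pieces do not interleave — since disjointness of the pieces alone does not force disjointness of their convex hulls, and without the structure of \cite{SdpbpI} one cannot get the \emph{strict} inequality by elementary means.
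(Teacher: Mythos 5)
Your treatment of parts (1)--(3) is in the spirit of the paper's proof, which also builds an identification of $\mbox{Con}(J(G))$ with $M(\Psi(G))$ and reads it back through $\G^{\NN}$; the paper does this concretely by constructing a homeomorphism $\rho :\G^{\NN}\to M(\Psi(G))$, $\rho (\g )=\bigcap _{n}(\Psi (\g _{n,1}))^{-1}(I)$, and quoting the injectivity of the induced map $\tilde{\Psi}$ from \cite[Claim 2 in the proof of Lemma 4.9]{SdpbpI}, so that $(1)$--$(3)$ follow from $\rho (\{\g :\g _{1}=h_{j}\})=(\Psi (h_{j}))^{-1}(M(\Psi (G)))$. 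Your ``order-preserving correspondence'' is stated more vaguely (you never justify that a component $J\subset h_{j}^{-1}(J(G))$ actually lands in $(\Psi (h_{j}))^{-1}(M(\Psi (G)))$, which is needed for your (3)), but the intent is right and the route is essentially the same.

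The real problem is your (4). You rightly identify that your convex-hull computation requires the pieces $P_{j}:=(\Psi (h_{j}))^{-1}(M(\Psi (G)))$ to be \emph{linearly ordered} in $\RR$ (so that $\mbox{conv}(P_{1}),\dots ,\mbox{conv}(P_{m})$ are pairwise disjoint intervals), and you flag this as ``the main geometric input.'' But you never establish it, and neither the citation of Theorem~\ref{t:3genmain} (which is about disjointness of the $h_{j}^{-1}(J(G))$, not about their consecutivity in $\leq _{s}$) nor a vague appeal to ``the structure theory of \cite{SdpbpI}'' fills that hole; the consecutivity of the blocks $h_{j}^{-1}(J(G))$ in $(\mbox{Con}(J(G)),\leq _{s})$ is a genuinely separate claim that needs a proof. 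Moreover, your closing remark that ``without the structure of \cite{SdpbpI} one cannot get the strict inequality by elementary means'' is incorrect, and this is exactly where the paper diverges from you. After proving (1)--(3), the paper chooses $\epsilon >0$ smaller than the minimal gap between distinct pieces $P_{i}$, lets $U$ be the $\epsilon$-neighborhood of $M(\Psi (G))$ (a finite union of bounded open intervals), and observes that the $(\Psi (h_{j}))^{-1}(\overline{U})$ are pairwise disjoint and $\bigcup _{j}(\Psi (h_{j}))^{-1}(\overline{U})\subset U$, so that, writing $l$ for Lebesgue measure, $\sum _{j}\frac{1}{\deg (h_{j})}\,l(U)=\sum _{j}l\bigl((\Psi (h_{j}))^{-1}(U)\bigr)<l(U)$. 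This gives $\sum _{j}1/\deg (h_{j})<1$ directly from disjointness of the pieces, with no non-interleaving hypothesis at all. You should replace your convex-hull step with this measure-theoretic argument, or else supply an actual proof that the $P_{j}$ do not interleave.
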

\begin{proof}
We use the arguments in the proof of \cite[Lemma 4.9]{SdpbpI}. 
For each $\gamma \in \G ^{\NN }$, let 
$J(G)_{\g }:= \bigcap _{j=1}^{\infty }\g _{j,1}^{-1}(J(G)).$ 
Since $J(G)=\bigcup _{j=1}^{m}h_{j}^{-1}(J(G))$ (\cite[Lemma 2.4]{S3}) and 
since $h_{i}^{-1}(J(G))\cap h_{j}^{-1}(J(G))$ $=\emptyset $ for each 
$(i,j)$ with $i\neq j$, 
we obtain that 
$J(G)=\amalg _{\g \in \G ^{\NN }}J(G)_{\g }$ (disjoint union). 
By \cite[Corollary 4.19]{S15}, for each $\g \in \G ^{\NN }$, 
$J(G)_{\g }$ is connected. Thus each $J(G)_{\g }$ is a connected component of $J(G).$ 
By \cite[Proposition 2.2(3)]{S7}, \cite[Lemma 4.1]{SdpbpI} and  
that $J_{\g }\subset J(G)_{\g }$ 
for each $\g \in \GN $, 
it follows that for each $\g \in \GN $, 
$\sup _{z\in J(\g _{n,1})}d(z,J(G)_{\g })\rightarrow 0 \mbox{ as } n\rightarrow \infty .$  
 By \cite[Lemma 4.5]{SdpbpI}, $M(\Psi (G))=J(\eta (\Psi (G)))\subset \RR .$ 
Since $J(\eta (\Psi (G)))=\bigcup _{j=1}^{m}(\eta (\Psi (h_{j})))^{-1}(J(\eta (\Psi (G))))$, 
by \cite[Theorem 2.6]{Fa} 
it follows that $M(\Psi (G))$ is the self-similar set constructed by 
contracting similitudes $(\Psi (h_{1}))^{-1},\ldots ,(\Psi (h_{m}))^{-1}$ on $\RR .$ 
Let $b_{\min }:= \min \{ \frac{-1}{\deg (h_{j})-1}\log |a(h_{j})|\mid j=1,\ldots ,m\} $ and 
$b_{\max }:= \max \{ \frac{-1}{\deg (h_{j})-1}\log |a(h_{j})|\mid j=1,\ldots ,m\} .$ 
Note that $\frac{-1}{\deg (g)-1}\log |a(g)|$ is the unique fixed point of $\Psi (g)$ in $\RR .$ 
Let $I=[b_{\min },b_{\max}]$ be the closed interval between $b_{\min }$ and $b_{\max}.$ 
Then we have that $\bigcup _{j=1}^{m}(\Psi (h_{j}))^{-1}(I)\subset I.$ 
It follows that 
$M(\Psi (G))=\bigcup _{\g \in \GN } \bigcap _{n=1}^{\infty }(\Psi (\g _{n,1}))^{-1}(I).$ 
Let $\rho :\GN \rightarrow M(\Psi (G))$ be the map defined by 
$\rho (\g ):= \bigcap _{n=1}^{\infty }(\Psi (\g _{n,1}))^{-1}(I)$ for each $\g .$ 
Then $\rho :\GN \rightarrow M(\Psi (G))$ is continuous. 
For each $\g \in \GN $ and each $n\in \NN $, 
$\frac{-1}{\deg (\g _{n,1})}\log |a(\g _{n,1})|$ is the fixed point of 
$\Psi (\g _{n,1})$ in $I$. Therefore 
$\frac{-1}{\deg (\g _{n,1})-1}\log |a(\g _{n,1})|=\rho (\omega ^{\gamma ,n})$, 
where $\omega ^{\g, n}\in \GN $ is the $n$-periodic point of $\sigma :\GN \rightarrow \GN $ 
with $((\omega ^{\g ,n})_{1},\ldots ,(\omega ^{\g ,n})_{n})=(\g _{1},\ldots ,\g _{n}).$ 
Since $\omega ^{\g ,n}\rightarrow \g $ in $\GN $ as $n\rightarrow \infty $, 
it follows that for each $\g \in \GN $, 
$\lim _{n\rightarrow \infty } \frac{-1}{\deg (\g _{n,1})-1}\log |a(\g _{n,1})|=\rho (\gamma ).$  
For each $\g \in \GN $, let $B_{\g }\in \mbox{Con}(M(\Psi (G)))$ with 
 $\lim _{n\rightarrow \infty } \frac{-1}{\deg (\g _{n,1})-1}\log |a(\g _{n,1})|\in B_{\g }.$ 
Let $\tilde{\Psi }:\mbox{Con}(J(G))\rightarrow \mbox{Con}(M(\Psi (G)))$ be the map 
defined by $\tilde{\Psi }(J(G)_{\g }):=B_{\g }$ for each $\g \in \GN .$ 
By \cite[Claim 2 in the proof of Lemma 4.9]{SdpbpI}, 
$\tilde{\Psi }: \mbox{Con}(J(G))\rightarrow \mbox{Con}(M(\Psi (G)))$ is injective. 
Therefore, it follows that $\rho :\GN \rightarrow M(\Psi (G))$ is 
injective. Thus, $\rho :\GN \rightarrow M(\Psi (G))$ is a homeomorphism. 
In particular, $M(\Psi (G))$ is a Cantor set in $I.$ 
Let $0<\epsilon <\min \{ |a-b|\mid a\in (\Psi (h_{i}))^{-1}(M(\Psi (G))), b\in (\Psi (h_{j}))^{-1}(M(\Psi (G))), i\neq j\} $ and 
let $U$ be the  $\epsilon $-neighborhood of $M(\Psi (G))$ in $\RR .$ 
(Thus $U$ is a finite union of bounded open intervals.) 
Since $\rho $ is a homeomorphism, 
$(\Psi (h_{i}))^{-1}(M(\Psi (G)))\cap (\Psi (h_{j}))^{-1}(M(\Psi (G))=\emptyset 
$ for each $(i,j)$ with $i\neq j$. Hence 
$\bigcup _{j=1}^{m}(\Psi (h_{j}))^{-1}(\overline{U})\subset U$ and 
$(\Psi (h_{i}))^{-1}(\overline{U})\cap (\Psi (h_{j}))^{-1}(\overline{U})=\emptyset $ for each 
$(i,j)$ with $i\neq j.$ 
Thus denoting by $l$ the one-dimensional Lebesgue measure, 
$\sum _{j=1}^{m}\frac{1}{\deg (h_{j})}l(U)=\sum _{j=1}^{m}l((\Psi (h_{j}))^{-1}(U))<l(U).$ 
Hence $\sum _{j=1}^{m}\frac{1}{\deg (h_{j})}<1.$  
Thus we have proved our lemma.
\end{proof} 
We now prove Theorem~\ref{t:hnondiffp}.\\ 
{\bf Proof of Theorem~\ref{t:hnondiffp}:} 
Statement~\ref{t:hnondiffp0}  
follows from 
Lemma~\ref{l:hndp1}. 
By Lemma~\ref{l:hndp2}, we have 
$u(h,p,\mu )=\frac{-\sum _{j=1}^{m}p_{j}\log p_{j}}{\sum _{j=1}^{m}p_{j}\log (\deg (h_{j}))}$. 
It is easy to see that 
$\min \{ \sum _{j=1}^{m}p_{j}\log \deg (h_{j})+\sum _{j=1}^{m}p_{j}\log p_{j}\mid (p_{1},\ldots ,p_{m})\in {\cal W}_{m}\} 
=-\log (\sum _{j=1}^{m}\frac{1}{\deg (h_{j})}).$ 
Combining these arguments with Lemmas~\ref{l:djsum} and \ref{l:hndp2}, 
we see that 
statement~\ref{t:hnondiffp4} follows.    
 
By \cite[Theorem 1.3 (f)]{S3}, 
$h_{\mu }(f|\sigma )=\sum _{j=1}^{m}p_{j}\log \deg (h_{j}).$ 
Hence, $h_{\mu }(f)=h_{\mu }(f|\sigma )+h_{\pi _{\ast }(\mu )}(\sigma )=
\sum _{j=1}^{m}p_{j}\log \deg (h_{j})-\sum _{j=1}^{m}p_{j}\log p_{j}$, 
where $h_{\mu }(f)$ denotes the metric entropy of $(f, \mu ).$  
Combining this with \cite[Lemma 7.1]{S3}, \cite[Lemma 5.52]{Splms10},  
that $\pi _{\CCI }:\tilde{J}(f)\rightarrow J(G)$ is a homeomorphism (Lemma~\ref{l:hndp2}),  
and that $G\in {\cal G}$, 
we see that 
$\dim _{H}(\lambda )=\frac{\sum _{j=1}^{m}p_{j}\log \deg (h_{j})-\sum _{j=1}^{m}p_{j}\log p_{j}}
{\sum _{j=1}^{m}p_{j}\log \deg (h_{j})}>1, $ 
where $\dim _{H}(\lambda ):= \inf \{ \dim _{H}(A)\mid A\mbox{ is a Borel subset of } J(G), \lambda (A)=1\} .$ 
Hence, we have proved statement \ref{t:hnondiffp4-1}. 
Statement~\ref{t:hnondiffp5} follows from statements~\ref{t:hnondiffp0} and \ref{t:hnondiffp4}. 
Thus we have proved Theorem~\ref{t:hnondiffp}.
\qed 
\subsection{Proof of Theorem~\ref{t:2gengdis}} 
In this subsection, we prove Theorem~\ref{t:2gengdis}. 
We need several lemmas and propositions. 
\begin{df}
Let $\Gamma $ be a non-empty compact subset of ${\cal P}$ and suppose 
$\langle \G \rangle \in {\cal G}_{dis}.$ 
We set $\G _{\min }:= \{ h\in \G \mid J(h)\subset J_{\min }(\langle \G \rangle )\} .$ 
Note that by \cite[Proposition 2.24]{SdpbpI}, 
$\G _{\min }\neq \emptyset .$ 
\end{df}
\begin{lem}
\label{l:mgdiscg}
Let $m\in \NN $ with $m\geq 2.$ Let 
$\G =\{ h_{1},\ldots ,h_{m}\} \subset {\cal P}.$ 
Let $G= \langle \G \rangle $ and suppose 
that $G\in {\cal G}_{dis}.$ 
Suppose that $\sharp \G _{\min }=1.$ 
Then, we have the following {\em (1)} and {\em (2)}. 
{\em (1)} For each $\g \in \GN $, 
$J_{\g }=\hat{J}_{\g ,\G }=\bigcap _{j=1}^{\infty }\g _{1}^{-1}\cdots \g _{j}^{-1}(J(G)).$ 
{\em (2)} The map $\g \mapsto J_{\g }$ is continuous on $\GN $ with respect to the Hausdorff metric 
in the space of non-empty compact subsets of $\CCI .$ 
\end{lem}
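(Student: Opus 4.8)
The plan is to prove the two displayed equalities in (1) by a sandwich argument and then to deduce (2). The inclusions $J_\gamma\subseteq\hat{J}_{\gamma,\Gamma}\subseteq\bigcap_{j=1}^{\infty}\gamma_1^{-1}\cdots\gamma_j^{-1}(J(G))$ are essentially formal: the first is recorded in Definition~\ref{d:sp} (it comes from $\{\gamma\}\times J_\gamma\subseteq\tilde{J}(f)$ and $\hat{J}^{\gamma,\Gamma}=\pi^{-1}(\{\gamma\})\cap\tilde{J}(f)$), while the second follows from $f(\tilde{J}(f))=\tilde{J}(f)$ together with $\pi_{\CCI}(\tilde{J}(f))\subseteq J(G)$ (the Remark after Definition~\ref{d:sp} and \cite[Lemma 2.4]{S4}): applying $f^{j}$ to $(\gamma,y)\in\hat{J}^{\gamma,\Gamma}$ gives $\gamma_{j,1}(y)=\pi_{\CCI}(f^{j}(\gamma,y))\in J(G)$ for every $j$. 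Thus everything reduces to the reverse inclusion $\bigcap_{j}\gamma_{j,1}^{-1}(J(G))\subseteq J_\gamma$.

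To prove that, fix $z$ with $\gamma_{n,1}(z)\in J(G)$ for all $n$. Since $G\in{\cal G}$, $J(G)\subseteq\CC$ is compact, so $\{\gamma_{n,1}(z)\}_n$ is bounded and $z\in K_\gamma$. By Lemma~\ref{l:pbkg}, $K_\gamma$ is connected, $A_{\infty,\gamma}$ is simply connected and $K_\gamma\cup A_{\infty,\gamma}=\CCI$, so $J_\gamma=\partial K_\gamma$ and it suffices to exclude $z\in\mathrm{int}(K_\gamma)$. Suppose $z\in\mathrm{int}(K_\gamma)\subseteq F_\gamma$; then $\{\gamma_{n,1}\}$ is normal on a disk $D\ni z$, and some subsequence satisfies $\gamma_{n_k,1}\to\varphi$ uniformly on $D$ with $\varphi(z)=\lim\gamma_{n_k,1}(z)\in J(G)\subseteq\CC$, so $\varphi\not\equiv\infty$. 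If $\varphi$ is non-constant, Lemma~\ref{ncintk} (with $y_0=z$, using $z\in F_\gamma$ and $G\in{\cal G}$) gives a $k$ with $\gamma_{n_k,1}(z)\in\mathrm{int}(\hat{K}(G))\subseteq F(G)$, contradicting $\gamma_{n_k,1}(z)\in J(G)$. The remaining case $\varphi\equiv c\in J(G)$ is where $\sharp\Gamma_{\min}=1$ must enter: writing $\Gamma_{\min}=\{h_{i_0}\}$, \cite[Proposition 2.24, Theorem 2.20]{SdpbpI} gives that $\hat{K}(G)$ is governed by $h_{i_0}$ (in particular $\partial\hat{K}(G)\subseteq J_{\min}(G)$, $J(h_{i_0})\subseteq J_{\min}(G)$, and for $j\neq i_0$ the set $J(h_j)$ is a quasicircle surrounding $J_{\min}(G)$ with $h_j$ having an attracting fixed point in $\mathrm{int}(\hat{K}(G))$), and one uses this to show that any orbit $\{\gamma_{n,1}(z)\}$ with $z\in\mathrm{int}(K_\gamma)$ eventually enters $\mathrm{int}(\hat{K}(G))$ — immediately if $\gamma$ is eventually constant equal to $h_{i_0}$ (since then $\gamma_{N,1}(z)\in\mathrm{int}(K(h_{i_0}))=\mathrm{int}(\hat{K}(G))$ for the last $N$ with $\gamma_N\neq h_{i_0}$), and otherwise by the attracting behaviour of the generators outside $\Gamma_{\min}$ near $J_{\min}(G)$. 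Either way $\gamma_{n,1}(z)\in\mathrm{int}(\hat{K}(G))\subseteq F(G)$ for some $n$, contradicting $\gamma_{n,1}(z)\in J(G)$; this proves (1), and also shows $\hat{J}_{\gamma,\Gamma}=J_\gamma$.

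For (2) I would show $\gamma\mapsto J_\gamma$ is both upper and lower semicontinuous. Upper semicontinuity is automatic from (1): with a common escape radius $R$ (available since $\Gamma$ is compact and every element has degree $\geq 2$), the maps $\gamma\mapsto\gamma_{n,1}^{-1}(J(G))$ are Hausdorff-continuous and decreasing in $n$, hence $J_\gamma=\bigcap_n\gamma_{n,1}^{-1}(J(G))$ is u.s.c.; likewise $\gamma\mapsto K_\gamma=\bigcap_n\gamma_{n,1}^{-1}(\{|z|\leq R\})$ is u.s.c. Lower semicontinuity of $\gamma\mapsto A_{\infty,\gamma}$ holds because membership in $A_{\infty,\gamma}$ is detected at finite time and is stable under perturbation of $\gamma$ and of the point; the remaining ingredient is lower semicontinuity of $\gamma\mapsto\mathrm{int}(K_\gamma)$, which follows from the same sub-lemma: if $z\in\mathrm{int}(K_\gamma)$ then $\gamma_{N,1}(z)\in\mathrm{int}(\hat{K}(G))$ for some $N$, and since $\hat{K}(G)\subseteq K_{\gamma'}$ for every $\gamma'$ and the orbit up to time $N$ depends continuously on $\gamma'$, one gets $z\in\mathrm{int}(K_{\gamma'})$ for all $\gamma'$ near $\gamma$. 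Combining these, the common boundary $J_\gamma=\partial A_{\infty,\gamma}=\partial K_\gamma$ also varies lower semicontinuously, so $\gamma\mapsto J_\gamma$ is continuous. (Alternatively, the continuity can be extracted from the nerve/inverse-limit description of $\mathrm{Con}(J(G))$ in \cite{S15}, which is the route used elsewhere in the paper.)

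The main obstacle is the constant-limit case of (1), equivalently the sub-lemma that an orbit starting in $\mathrm{int}(K_\gamma)$ must enter $\mathrm{int}(\hat{K}(G))$. This is precisely where $\sharp\Gamma_{\min}=1$ is indispensable — for $\sharp\Gamma_{\min}\geq 2$ the map $\gamma\mapsto J_\gamma$ genuinely fails to be continuous (cf.\ \cite[Example 1.7]{S7}) — and its proof needs the detailed description of $\hat{K}(G)$, of $J_{\min}(G)$, and of the generators not lying in $\Gamma_{\min}$ from \cite{SdpbpI}. Everything else is bookkeeping with the skew-product formalism of Definition~\ref{d:sp}.
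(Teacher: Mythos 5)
Your reduction of (1) to the inclusion $\bigcap_{j}\g _{j,1}^{-1}(J(G))\subset J_{\g }$, and the dichotomy (non-constant limit handled by Lemma~\ref{ncintk}, constant limit as the hard case) match the paper's strategy. But the constant-limit case is precisely the content of the lemma, and you have not proved it: you assert that the orbit of a point of $\mbox{int}(K_{\g })$ must enter $\mbox{int}(\hat{K}(G))$ ``by the attracting behaviour of the generators outside $\G _{\min }$ near $J_{\min }(G)$,'' but nothing in your argument puts the orbit near $J_{\min }(G)$ (or $\hat{K}(G)$) in the first place, nor guarantees that a non-minimal generator acts at such a moment. From $\varphi \equiv c$ you only know $c\in J(G)$, which is useless by itself. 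The paper closes exactly this hole with two further ingredients: (i) \cite[Lemma 3.13]{SdpbpIII}, which says that a constant limit of the compositions $\g _{n_{j},1}$ forces $d(\g _{n_{j},1}(y_{0}),P^{\ast }(G))\rightarrow 0$; and (ii) a choice of the subsequence $\{ n_{j}\} $ so that $\g _{n_{j}+1}=h_{i}$ for one fixed $i$ with $h_{i}\notin \G _{\min }$, whence $\g _{n_{j},1}(y_{0})\in h_{i}^{-1}(J(G))$, a compact set disjoint from $P^{\ast }(G)$ by \cite[Theorem 2.20-2,5]{SdpbpI} --- giving the contradiction directly, with no need for the orbit to enter $\mbox{int}(\hat{K}(G))$ at all. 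Without (i) (or some substitute controlling where degenerate limits can sit) your sketch does not go through; you yourself flag this case as ``the main obstacle,'' but flagging it is not resolving it. (Your eventually-$h_{i_{0}}$ case is fine, modulo justifying $\mbox{int}(K(h_{i_{0}}))=\mbox{int}(\hat{K}(G))$, which the paper does by showing $h_{j}(\mbox{int}(K(h_{1})))\subset \mbox{int}(K(h_{1}))$ for $j\neq 1$ via \cite[Theorem 2.20-5]{SdpbpI}; the paper instead argues in this case that the point would lie in $F_{\infty }(h_{1})$ and escape, contradicting boundedness.)

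The same unproved ``sub-lemma'' is then load-bearing in your part (2): your lower semicontinuity of $\g \mapsto \mbox{int}(K_{\g })$ is deduced from it, and in addition your passage from lower semicontinuity of $A_{\infty ,\g }$ and $\mbox{int}(K_{\g })$ to lower semicontinuity of $J_{\g }$ tacitly uses $J_{\g }=\partial (\mbox{int}(K_{\g }))$, i.e.\ that every point of $J_{\g }$ is approximable from $\mbox{int}(K_{\g })$; Lemma~\ref{l:pbkg} only gives $J_{\g }=\partial K_{\g }=\partial A_{\infty ,\g }$, and a priori $K_{\g }$ could have boundary points not on the closure of its interior. The paper avoids all of this: once $J_{\g }=\hat{J}_{\g ,\G }$ is established (and $\hat{J}_{\g ,\G }=\bigcap _{j}\g _{j,1}^{-1}(J(G))$ is quoted from \cite[Lemma 3.5]{SdpbpIII}), continuity of $\g \mapsto J_{\g }$ follows immediately from \cite[Proposition 2.2(3)]{S7}, which supplies the one-sided semicontinuities for $J_{\g }$ and $\hat{J}_{\g ,\G }$ in general. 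So the architecture of your proposal is right, but the decisive step --- ruling out constant degenerate limits when infinitely many non-minimal generators occur --- is missing, and it cannot be waved away by ``attraction toward $\hat{K}(G)$.''
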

\begin{proof}
We may assume that 
$\G _{\min }=\{ h_{1}\} .$ 
By \cite[Theorem 2.20-5]{SdpbpI}, 
$\emptyset \neq \mbox{int}(\hat{K}(G))\subset \mbox{int}(K(h_{1})).$ 
By \cite[Theorem 2.20-5]{SdpbpI} again, 
for each $j\geq 2$, 
$h_{j}(J(h_{1}))\subset h_{j}(J_{\min }(G))\subset \mbox{int}(\hat{K}(G))\subset \mbox{int}(K(h_{1})).$ 
Therefore for each $j\geq 2$, 
$h_{j}(\mbox{int}(K(h_{1})))\subset \mbox{int}(K(h_{1})).$ 
Thus $\mbox{int}(K(h_{1}))\subset F(G).$ 
 Let $\g \in \GN .$ 
Suppose that there exists a point $y_{0}\in \hat{J}_{\g ,\G }\setminus J_{\g }.$ 
We now consider the following two cases. 
Case 1: $\sharp \{ n\in \NN \mid \g _{n}\neq h_{1}\} =\infty .$ 
Case 2: $\sharp \{ n\in \NN \mid \g _{n}\neq h_{1}\} <\infty .$ 

 Suppose that we have Case 1. 
Then there exist an open neighborhood $U$ of $y_{0}$ in $\CCI $, 
a strictly increasing sequence $\{ n_{j}\} _{j=1}^{\infty }$ of positive integers, 
a number $i\in \{ 2,\ldots ,m\} $, 
and a map $\varphi : U\rightarrow \CCI $, such that  
$\g _{n_{j}+1}=h_{i}$ for each $j\in \NN $, and such that 
$\g _{n_{j},1}\rightarrow \varphi $ uniformly on $U$ as $j\rightarrow \infty .$ 
Since $\gamma _{n_{j},1}(y_{0})\in J(G)$ for each $j$, 
\cite[Lemma 5.6]{SdpbpII} and \cite[Proposition 2.19]{SdpbpI} imply that 
$\varphi $ is constant. 
By \cite[Lemma 3.13]{SdpbpIII}, it follows that 
$d(\g _{n_{j},1}(y_{0}), P^{\ast }(G))\rightarrow 0$ as $j\rightarrow \infty .$ 
Moreover, since $\g _{n_{j}+1}=h_{i}$, 
we obtain $\g _{n_{j},1}(y_{0})\in h_{i}^{-1}(J(G))$ for each $j.$ 
Furthermore, by \cite[Theorem 2.20-2,5]{SdpbpI}, $h_{i}^{-1}(J(G))\subset \CCI \setminus P^{\ast }(G).$ 
This is a contradiction. Hence, we cannot have Case 1. 

 Suppose we have Case 2. 
Let $r\in \NN $ be a number such that 
for each $s\in \NN $ with $s\geq r$, $\g _{s}=h_{1}.$ 
Then $h_{1}^{n}(\g _{r,1}(y_{0}))\in J(G)$ for each $n\geq 0.$ 
Since $y_{0}\not\in J_{\g }$, 
we have $\g _{r,1}(y_{0})\not\in J(h_{1}).$ 
Moreover, since $\g_{r,1}(y_{0})\in J(G)$ and int$(\hat{K}(h_{1}))\subset F(G)$, 
it follows that $\g _{r,1}(y_{0})$ belongs to $F_{\infty }(h_{1})$. 
It implies that  $h_{1}^{n}(\g _{r,1}(y_{0}))\rightarrow \infty $ as $n\rightarrow \infty $. 
However, this contradicts that $h_{1}^{n}(\g _{r,1}(y_{0}))\in J(G)$ for each $n\geq 0.$ 
Therefore, we cannot have Case 2. 

Thus, for each $\g \in \G $, $J_{\g }=\hat{J}_{\g ,\G }.$
Moreover, by \cite[Lemma 3.5]{SdpbpIII}, 
$\hat{J}_{\g ,\G }=\bigcap _{j=1}^{\infty }\g _{1}^{-1}\cdots \g _{j}^{-1}(J(G))$ for each 
$\g \in \GN .$  
Combining the result ``$J_{\g }=\hat{J}_{\g ,\G }$ for each $\g \in \GN $''  with \cite[Proposition 2.2(3)]{S7}, 
we obtain that the map $\g \mapsto J_{\g }$ is continuous. 
%
\end{proof}
\begin{prop}
\label{p:mgengdis}
Let $m\geq 2$ and 
let $G=\langle h_{1},\ldots ,h_{m}\rangle \in {\cal G}.$ 
Let $(p_{1},\ldots ,p_{m})\in {\cal W}_{m}$ and 
let $\tau =\sum _{j=1}^{m}p_{j}\delta _{h_{j}}.$  
Let $\G =\{ h_{1},\ldots ,h_{m}\} .$ 
Suppose that $h_{i}^{-1}(J(G))\cap h_{j}^{-1}(J(G))=\emptyset $ for 
each $(i,j)$ with $i\neq j.$ 
Then we have the following.
\begin{enumerate}
\item $G\in {\cal G}_{dis} $ and $\sharp \G _{\min }=1$. For each $\g \in \GN $, 
$J_{\g }=\hat{J}_{\g ,\G }=\bigcap _{j=1}^{\infty }\g _{1}^{-1}\cdots \g _{j}^{-1}(J(G)).$ 
The map $\g \mapsto J_{\g }$ is continuous on $\GN $ with respect to the Hausdorff metric 
in the space of all non-empty compact subsets of $\CCI .$ 
\item \label{p:mgengdis2}
For each $J\in \mbox{{\em Con}}(J(G))$, there exists a unique $\g \in \GN $ with 
$J=J_{\g }.$ {\em Con}$(J(G))=\{ J_{\g }\mid \g \in \GN \} .$ 
The map $\g \mapsto J_{\g }$ is a bijection between $\GN $ and $\mbox{{\em Con}}(J(G)).$ 
In particular, 
there exist uncountably many connected components of $J(G).$ 
\item There exist infinitely many doubly connected components of $F(G).$ 
\item For each $J\in \mbox{{\em Con}}(J(G))$,  
$T_{\infty ,\tau }|_{J}$ is constant. 
\item Let $J_{1},J_{2}\in \mbox{{\em Con}}(J(G))$ with $J_{1}\neq J_{2}.$ 
Suppose $T_{\infty ,\tau }|_{J_{1}}=T_{\infty ,\tau }|_{J_{2}}.$ 
Then there exists a doubly connected component $A$ of $F(G)$ such that 
$\partial A\subset J_{1}\cup J_{2}.$ 
\end{enumerate}
\end{prop}
\begin{proof}
Since $J(G)=\bigcup _{j=1}^{m}h_{j}^{-1}(J(G))$ (\cite[Lemma 2.4]{S3}), 
$G\in {\cal G}_{dis}.$ 
By \cite[Proposition 2.24]{SdpbpI}, 
$\G _{\min }\neq \emptyset .$ Without loss of generality, 
we may assume that 
$h_{1}\in \G _{\min }.$ 
Since $J(G)=\bigcup _{j=1}^{m}h_{j}^{-1}(J(G))$ again, 
for each $j\geq 2$, there exists no $J\in \mbox{Con}(J(G))$ with $J(h_{1})\cup J(h_{j})\subset J.$ 
Therefore, $\G _{\min }=\{ h_{1}\} .$ 
By Lemma~\ref{l:mgdiscg}, 
it follows that  
$J_{\g }=\hat{J}_{\g ,\G }=\bigcap _{j=1}^{\infty }\g _{1}^{-1}\cdots \g _{n}^{-1}(J(G))$ for each 
$\g\in \GN $, and that 
the map $\g \mapsto J_{\g }$ is continuous.  
Since $J(G)=\bigcup _{j=1}^{m}h_{j}^{-1}(J(G))$ and since 
$h_{i}^{-1}(J(G))\cap h_{j}^{-1}(J(G))=\emptyset $ for each $(i,j)$ with $i\neq j$, 
we obtain that 
$J(G)=\amalg  _{\g \in \G ^{\NN }} \bigcap _{n=1}^{\infty }\g _{1}^{-1}\cdots \g _{n}^{-1}(J(G)).$ 
Moreover, by \cite[Lemma 3.6]{SdpbpIII}, 
$J_{\g }$ is connected for each $\g \in \GN .$ Therefore 
$J_{\g }$ is a connected component of $J(G)$ for each $\g \in \GN .$ Moreover, 
the map $\g \in \GN \mapsto J_{\g }\in \mbox{Con}(J(G))$ is a bijection. 
In particular, there exist uncountably many connected components of $J(G).$ 
Combining this with \cite[Theorem 2.7-1, Lemma 4.4]{SdpbpI}, 
we obtain that there are infinitely many doubly connected components of $F(G).$ 

Let $J\in \mbox{Con}(J(G)).$ Then there exists a unique element 
$\alpha \in \GN $ such that 
$J=J_{\alpha }.$ 
Let $z_{0}\in J$ be a point. Let $\g \in \GN $ be an element. 
Suppose $\gamma _{n,1}(z_{0})\rightarrow \infty .$ 
Then $\gamma \neq \alpha .$ 
By the uniqueness of $\alpha $, we obtain 
$J_{\g }\neq 
J_{\alpha }.$ 
By \cite[Theorem 2.7]{SdpbpI} and that $\gamma _{n,1}(z_{0})\rightarrow \infty $, 
it follows that  $J_{\g }<_{s} 
J=J_{\alpha }.$ 
Therefore, for each $z\in J$, 
$\gamma _{n,1}(z)\rightarrow \infty .$ 
Thus, $T_{\infty ,\tau }|_{J}$ is constant.  

 We now let  $J_{1},J_{2}\in \mbox{Con}(J(G))$ with $J_{1}\neq J_{2}$ and  
suppose $T_{\infty ,\tau }|_{J_{1}}=T_{\infty ,\tau }|_{J_{2}}.$ 
Without loss of generality, we may assume $J_{1}<_{s}J_{2}.$ 
By \cite[Lemma 4.4]{SdpbpI}, 
there exists a doubly connected component $A$ of $F(G)$ such that 
$J_{1}<_{s}A<_{s}J_{2}.$ Let $B_{1} $ and $B_{2}$ be two connected components of $\partial A$ with 
$B_{1}<_{s}B_{2}.$ For each $i=1,2$,  
let $J_{i}'\in \mbox{Con}(J(G))$ with $B_{i}\subset J_{i}'.$ 
Then $J_{1}\leq _{s}J_{1}'<_{s}A<_{s}J_{2}'\leq _{s}J_{2}.$ 
Suppose $J_{1}<_{s}J_{1}'.$ Then 
by \cite[Lemma 4.4]{SdpbpI}, 
there exists a doubly connected component  $D_{1}$ of $F(G)$ such that 
$J_{1}<_{s}D_{1}<_{s}J_{1}'.$ Therefore 
$J_{1}<_{s}D_{1}<_{s}A<_{s}J_{2}.$ 
By  Lemma~\ref{l:fmcmono}, Theorem~\ref{randomthm1}-\ref{randomthm1m} and Lemma~\ref{l:y1y2}-\ref{l:y1y2-1}, 
it follows that 
$T_{\infty ,\tau }|_{J_{1}}\leq T_{\infty ,\tau }|_{D_{1}}<T_{\infty ,\tau }|_{A}\leq T_{\infty ,\tau }|_{J_{2}}.$ 
However, this contradicts that $T_{\infty ,\tau }|_{J_{1}}=T_{\infty ,\tau }|_{J_{2}}.$ Therefore, 
$J_{1}=J_{1}'.$ 
Similarly, we obtain $J_{2}=J_{2}'.$ 
Therefore, $\partial A\subset J_{1}\cup J_{2}.$  

Thus we have proved our proposition. 
\end{proof}

We now prove Theorem~\ref{t:2gengdis}.

{\bf Proof of \ref{t:2gengdis}:}
Let $\G :=\{ h_{1},h_{2}\} .$ 
By \cite[Theorems 3.17, 3.2]{S15}, 
$h_{1}^{-1}(J(G))\cap h_{2}^{-1}(J(G))=\emptyset .$ 
Thus all statements \ref{t:2gengdis1}--\ref{t:2gengdis5} in Theorem~\ref{t:2gengdis} follow from Proposition~\ref{p:mgengdis} and Theorem~\ref{t:hnondiffp}.   

 We now prove statement~\ref{t:2gengdis6}. 
 By statement~\ref{t:2gengdis2} and \cite[Theorem 2.7]{SdpbpI}, 
 either $J(h_{1})<_{s}J(h_{2})$ or $J(h_{2})<_{s}J(h_{1}).$ 
 We now assume $J(h_{1})<_{s}J(h_{2}).$ Then, by \cite[Proposition 2.24]{SdpbpI}, 
 $J(h_{1})\subset J_{\min }(G)$ and $J(h_{2})\subset J_{\max }(G).$ 
 By statement~\ref{t:2gengdis2}, it follows that 
 $J(h_{1})=J_{\min }(G)$ and $J(h_{2})=J_{\max}(G).$ Let $A=K(h_{2})\setminus \mbox{int}(K(h_{1})).$ 
 We now prove the following claim. \\ 
Claim 1. $h_{1}^{-1}(A)\cup h_{2}^{-1}(A)\subset A.$ 

 To prove this claim, let $\alpha =(h_{2},h_{1},h_{1},\ldots )\in \GN .$ 
Then $J_{\alpha }=h_{2}^{-1}(J(h_{1})).$ 
Since $J(h_{1})=J_{\min }(G)$, statement~\ref{t:2gengdis2} implies that 
$J(h_{1})<_{s}J_{\alpha }=h_{2}^{-1}(J(h_{1})).$ 
Therefore $h_{2}^{-1}(A)\subset A.$ Similarly, 
letting $\beta =(h_{1},h_{2},h_{2},\ldots )\in \GN $, 
we have $J_{\beta }=h_{1}^{-1}(J(h_{2}))<_{s}J(h_{2})$ and 
$h_{1}^{-1}(A)\subset A.$ Thus we have proved Claim 1. 

 We have that $h_{1}^{-1}(A) $ and $h_{2}^{-1}(A)$ are connected compact sets. 
We prove the following claim.

Claim 2. $J_{\beta }=h_{1}^{-1}(J(h_{2}))<_{s}J_{\alpha } =h_{2}^{-1}(J(h_{1})).$ In particular, 
$h_{1}^{-1}(A)<_{s}h_{2}^{-1}(A).$ 

 To prove this claim, suppose that $J_{\beta }<_{s}J_{\alpha }$ does not hold. 
 Then by \cite[Theorem 2.7]{SdpbpI}, 
 $J_{\alpha }<_{s}J_{\beta }.$ 
 This implies that $A=h_{1}^{-1}(A)\cup h_{2}^{-1}(A).$ 
 By \cite[Corollary 3.2]{HM}, we have $J(G)\subset A.$ 
 Since $J(G)$ is disconnected (assumption) and since $A$ is connected, 
 $F(G)\cap A\neq \emptyset .$ 
 Let $y\in F(G)\cap A.$ Since $A=h_{1}^{-1}(A)\cup h_{2}^{-1}(A)$, 
there exists an element $\g \in \GN $ such that 
for each $n\in \NN $, $\g _{n,1}(y)\in A.$ 
Since $y\in A\cap F(G)$ and $G(F(G))\subset F(G)$, 
  $\g _{n,1}(y)\in F_{\infty }(h_{1})\cap A$ for each $n\in \NN .$ 
Therefore there exists a strictly increasing sequence $\{ n_{j}\} _{j=1}^{\infty }$ in $\NN $ 
such that 
for each $j$, $\g _{n_{j}+1}=h_{2}.$ 
Since $y\in F_{\g }$, we may assume that there exist an open neighborhood $U$ of $y$ in $\CCI $ 
and a holomorphic map $\varphi :U\rightarrow \CCI $ such that 
$\g _{n_{j},1}\rightarrow \varphi $ uniformly on $U$ as $j\rightarrow \infty .$ 
Since $\g _{n_{j},1}(y)\in F_{\infty }(h_{1})\cap A\subset (\CCI \setminus \hat{K}(G))\cap A$ for each $j$, 
\cite[Lemma 5.6]{SdpbpII}
implies that 
there exists a constant $c\in \CC $ such that $\varphi =c$ on $U.$ 
By \cite[Lemma 3.13]{SdpbpIII}, 
it follows that $c\in P^{\ast }(G).$ Since $P^{\ast }(G)\subset K(h_{1})$ and 
since $\g _{n_{j},1}(y)\in F_{\infty }(h_{1})$ for each $j$, it follows that 
$d(\g _{n_{j},1}(y),J(h_{1}))\rightarrow 0$ as $j\rightarrow \infty .$ 
Combining this with that $\g _{n_{j}+1}=h_{2}$ for each $j$, we obtain that 
$d(\g _{n_{j},1}(y), h_{2}^{-1}(J(h_{1})))\rightarrow \infty .$ 
Since $J(h_{1})<_{s}h_{2}^{-1}(J(h_{1}))$, 
it follows that $c\in F_{\infty }(h_{1}).$ However, this is a contradiction, 
since $c\in P^{\ast }(G)\subset K(h_{1}).$ 
Therefore, $J_{\beta }<_{s}J_{\alpha }.$ Thus we have proved Claim 2. 

  Let $\theta =(h_{2},\theta _{2},\theta _{3},\ldots )\in \GN  $ and 
$\xi =(h_{1},\xi _{2},\xi _{3},\ldots )\in \GN .$ 
Then $J_{\theta }\subset h_{2}^{-1}(J(G))\subset h_{2}^{-1}(A)$ and 
$J_{\xi }\subset h_{1}^{-1}(J(G))\subset h_{1}^{-1}(A).$   
  By claim 2, statement~\ref{t:2gengdis2} and \cite[Theorem 2.7]{SdpbpI}, 
we obtain that 
$J_{\xi }<_{s}J_{\theta }.$ 
Combining this result with statement~\ref{t:2gengdis2} and \cite[Theorem 2.7-3]{SdpbpI}, 
we see that 
the map $\zeta :\{ 1,2\} ^{\NN }\rightarrow \mbox{Con}(J(G))$ satisfies that 
if $w^{1},w^{2}\in \{ 1,2\} ^{\NN }$ with $w^{1}<_{l}w^{2}$, then 
$\zeta (w^{1})<_{s}\zeta (w^{2}).$ Moreover, by statement \ref{t:2gengdis2}, 
this map $\zeta : \{ 1,2\} ^{\NN }\rightarrow \mbox{Con}(J(G))$  is a bijection. 
 Thus we have proved statement~\ref{t:2gengdis6}. 

 We now prove statement~\ref{t:2gengdis7}.  
Suppose $J(h_{1})<_{s}J(h_{2}).$ Then $J_{\min }(G)=J(h_{1})$ and 
$J_{\max }(G)=J(h_{2}).$ 
By \cite[Theorem 2.20-5]{SdpbpI}, 
we obtain $h_{2}(J(h_{1})) \subset K(h_{1})$. 
Therefore $\hat{K}(G)=K(h_{1}).$ 
Thus $K(h_{1})\subset T_{\infty ,\tau }^{-1}(\{ 0\} ).$ 
Moreover, for any $y\in F_{\infty }(h_{2})$, 
there exists an element $g\in G$ with $g(y)\in F_{\infty }(G).$ Therefore 
$T_{\infty ,\tau }(y)>0.$ It follows that 
$T_{\infty ,\tau }^{-1}(\{ 0\} )=K(h_{1}).$ 
Since $J_{\max }(G)=J(h_{2})$, 
$F_{\infty }(G)=F_{\infty }(h_{2}).$ 
Since $T_{\infty ,\tau }:\CCI \rightarrow [0,1]$ is continuous 
(see Theorem~\ref{randomthm1}-\ref{randomthm1c}),  
$\overline{F_{\infty }(h_{2})}\subset T_{\infty ,\tau }^{-1}(\{ 1\}).$ 
By \cite[Theorem 2.20-5]{SdpbpI}, 
int$(K(h_{2}))$ is connected, int$(K(h_{2}))$ is the immediate basin of 
an attracting fixed point $a$ of $h_{2}$, and $a\in \mbox{int}(\hat{K}(G)).$ 
Therefore, for any $z\in \mbox{int}(K(h_{2}))$, there exists an element 
$h\in G$ such that $h(z)\in \hat{K}(G).$ Thus $T_{\infty ,\tau }(z)<1$ 
for any $z\in \mbox{int}(K(h_{2})).$  
Hence, $T_{\infty ,\tau }^{-1}(\{ 1\} )=\overline{F_{\infty }(h_{2})}.$ 
We now let $w=(w_{1},w_{2},\ldots )\in \{ 1,2\} ^{\NN }.$ 
We first consider  the case 
\begin{equation}
\label{eq:wn12i}
\sharp \{ n\in \NN \mid w_{n}=1\} =\sharp \{ n\in \NN \mid w_{n}=2\} =\infty .
\end{equation}
The following claim follows from \cite[Theorem 3.11(2)]{SdpbpII}.\\ 
Claim 3. There exists exactly one bounded component $B_{w}$ of $F_{\g (w)}.$ 
Moreover, $\partial B_{w}=\partial A_{\infty ,\gamma (w)}=J_{\gamma (w)}.$

By (\ref{eq:wn12i}), 
there exists a sequence $\{ \lambda ^{n}\} _{n=1}^{\infty }$ in 
$\{ 1,2\} ^{\NN }$ such that 
$\lambda ^{1}<_{l}\lambda ^{2}<_{l}\cdots <_{l}w$ and $\lambda ^{n}\rightarrow 
w $ as $n\rightarrow \infty .$ By statements~\ref{t:2gengdis2}, \ref{t:2gengdis6}, 
it follows that 
$J_{\g (\lambda ^{1})}<_{s}J_{\g (\lambda ^{2})}<_{s}\cdots <_{s}J_{\gamma (w)}$ and 
$J_{\g (\lambda ^{n})}\rightarrow J_{\g (w)}$ as $n\rightarrow \infty $ 
with respect to the Hausdorff metric. 
Combining this with \cite[Lemma 4.4]{SdpbpI}, 
Theorem \ref{randomthm1}-\ref{randomthm1m} and Lemmas~\ref{l:fmcmono}, \ref{l:y1y2},   
we obtain that for each $y$ in the bounded connected component of $\CCI \setminus J_{\g (w)}$, 
$T_{\infty ,\tau }(y)<T_{\infty ,\tau }|_{J_{\g (w)}}.$ 
Similarly, we can obtain that for each $y$ in the unbounded connected component of $\CCI \setminus J_{\g (w)}$, 
$T_{\infty ,\tau }(y)>T_{\infty ,\tau }|_{J_{\g (w)}}.$ 
Therefore letting $t:=T_{\infty ,\tau }|_{J_{\g (w)}}\in (0,1)$, 
$T_{\infty ,\tau }^{-1}(\{ t\} )=J_{\g (w)}.$ 

 We now consider the case 
\begin{equation}
\label{eq:wn122} 
\sharp \{ n\in \NN \mid w_{n}=1\} <\infty , w\neq (2,2,2,\ldots ).
\end{equation} 
Let $r\in \NN $ be the minimum number such that for each $n\geq r$, 
$w_{n}=2.$ Then $r\geq 2$ and $w_{r-1}=1.$ 
Let $\rho =w $ and let $\mu =(w_{1},\ldots w_{r-2},2,1,1,1,1,\ldots )\in \{ 1,2\} ^{\NN }$ 
(if $r=2$, then let $\mu =(2,1,1,1,1,\ldots )$). 
Then there exists no $\lambda \in \{ 1,2\} ^{\NN }$ with $\rho <_{l}\lambda <_{l}\mu .$ 
By statements~\ref{t:2gengdis4}, \ref{t:2gengdis6} and 
Theorem~\ref{randomthm1}-\ref{randomthm1c}, we obtain that 
 there exists a doubly connected component $A$ of $F(G)$ with 
$\partial A\subset J_{\g (\rho )}\cup J_{\g (\mu )}$, 
and that there exists a $t\in (0,1)$ with 
$T_{\infty ,\tau }|_{K_{\g (\mu )}\setminus \mbox{int}(K_{\rho })}=t.$ 
Moreover, since $(h_{w_{r-1}}\cdots h_{w_{1}})^{-1}(J(h_{2}))=J_{\g (\rho )}$,  
since $J(h_{2})$ is a quasicircle (\cite[Theorem 2.20-4]{SdpbpI}), 
and since $P^{\ast }(G)\subset \mbox{int}(K(h_{2}))$, 
we obtain that $J_{\g (\rho )}$ is a quasicircle. 
For the element $\rho $, there exists  a sequence 
$\{ \lambda ^{n}\} _{n=1}^{\infty }$ in 
$\{ 1,2\} ^{\NN }$ such that 
$\lambda ^{1}<_{l}\lambda ^{2}<_{l}\cdots <_{l}\rho $ and $\lambda ^{n}\rightarrow 
\rho  $ as $n\rightarrow \infty .$ By statements~\ref{t:2gengdis2}, \ref{t:2gengdis6}, 
it follows that 
$J_{\g (\lambda ^{1})}<_{s}J_{\g (\lambda ^{2})}<_{s}\cdots $ and 
$J_{\g (\lambda ^{n})}\rightarrow J_{\g (\rho )}$ as $n\rightarrow \infty $ 
with respect to the Hausdorff metric. 
Combining this with \cite[Lemma 4.4]{SdpbpI}, 
Theorem \ref{randomthm1}-\ref{randomthm1m} and Lemmas~\ref{l:fmcmono}, \ref{l:y1y2},   
we obtain that for each $y$ in the bounded connected component of $\CCI \setminus J_{\g (\rho )}$, 
$T_{\infty ,\tau }(y)<T_{\infty ,\tau }|_{J_{\g (\rho )}}.$ 
Similarly, we can obtain that for each $y$ in the unbounded connected component of $\CCI \setminus J_{\g (\mu )}$, 
$T_{\infty ,\tau }(y)>T_{\infty ,\tau }|_{J_{\gamma (\mu )}}=T_{\infty ,\tau }|_{J_{\g (w)}}.$ 
Therefore 
$T_{\infty ,\tau }^{-1}(\{ t\} )=K_{\g (\mu )}\setminus \mbox{int}(K_{\g (\rho )}).$ 
From these arguments, statement~\ref{t:2gengdis7} follows. 

 Thus, we have proved Theorem~\ref{t:2gengdis}. 
\qed 
\vspace{-2mm} 
\subsection{Proofs of Theorem~\ref{t:3genmain} and Corollary~\ref{c:3gengdis}}
\vspace{-2mm} 
In this subsection, we prove 
Theorem~\ref{t:3genmain} and Corollary~\ref{c:3gengdis}.\\ 
{\bf Proof of Theorem~\ref{t:3genmain}:} 
Since $G\in {\cal G}_{dis}$, 
by \cite[Theorem 1.7, Theorem 1.5]{S15} there exists a number $k\in \{ 1,2,3\} $ 
such that 
\begin{equation}
\label{eq:kisol}
h_{k}^{-1}(J(G))\cap  h_{j}^{-1}(J(G)))=\emptyset \mbox{ for each }j \mbox{ with }j\neq k.
\end{equation}
We set $J_{\min }=J_{\min }(G)$ and $J_{\max }=J_{\max }(G).$ 
By \cite[Proposition 2.24]{SdpbpI}, 
we have $J_{\min }=J_{1}$ and $J_{\max }=J_{3}.$
We show the following claim.\\ 
Claim 1. $h_{1}^{-1}(J(G))\cap h_{3}^{-1}(J(G))=\emptyset .$ 
 
To prove this claim, we  consider the following three cases (i),(ii),(iii). 
(i) $J_{1}=J_{2}$. (ii) $J_{2}=J_{3}$. (iii) $J_{1}<_{s}J_{2}<_{s}J_{3}.$ 

 Suppose we have case (i). Since $J(G)=\bigcup _{j=1}^{3}h_{j}^{-1}(J(G))$ (\cite[Lemma 2.4]{S3}), 
we have $J_{\min }=\bigcup _{j=1}^{3}(J_{\min }\cap h_{j}^{-1}(J(G))).$ 
Since $J(h_{3})\subset J_{\max }\subset \CC \setminus J_{\min }$, 
by \cite[Theorem 2.20-5(b)]{SdpbpI} 
we obtain that $J_{\min }\cap h_{3}^{-1}(J(G))=\emptyset .$ 
Therefore $J_{\min }=\bigcup  _{j=1}^{2}(J_{\min }\cap h_{j}^{-1}(J(G))).$
Moreover, since $J_{1}=J_{2}=J_{\min }$, 
and since $h_{j}^{-1}(J_{\min })$ is connected for each $j=1,2$ (\cite[Theorem 2.7]{SdpbpI}), 
we have 
that $J_{\min }\cap h_{j}^{-1}(J(G))\supset h_{j}^{-1}(J_{\min })\neq \emptyset $ for each $j=1,2.$ 
Since $J_{\min }$ is connected, it follows that 
$\bigcap _{j=1}^{2}(J_{\min }\cap h_{j}^{-1}(J(G)))\neq \emptyset .$ 
In particular $h_{1}^{-1}(J(G))\cap h_{2}^{-1}(J(G))\neq \emptyset .$   
 By (\ref{eq:kisol}), it follows that 
 $h_{3}^{-1}(J(G))\cap (\bigcup _{j=1}^{2}h_{j}^{-1}(J(G)))=\emptyset .$ 

We now suppose we have case (ii). 
By the arguments similar to those in case (i), 
we obtain that $h_{2}^{-1}(J(G))\cap h_{3}^{-1}(J(G))\neq \emptyset $ and  
 $h_{1}^{-1}(J(G))\cap (\bigcup _{j=2,3}h_{j}^{-1}(J(G)))=\emptyset $. 

 We now suppose that we have case (iii). 
Then by \cite[Corollary 3.7]{SdpbpIII}, 
$h_{j}^{-1}(J(h_{1}))$ is connected for each $j=2,3.$ 
Moreover, since $J(h_{j})\cap J_{\min } =\emptyset $ for each $j=2,3$ 
and $\sharp J_{\min }\geq 2$ (\cite[Theorem 2.20-5(b)]{SdpbpI}),   
we obtain that $h_{j}^{-1}(J(h_{1}))\cap J(h_{1})=\emptyset $ for each $j=2,3.$ 
By \cite[Lemma 3.9]{SdpbpIII}, 
it follows that $J(h_{1})<_{s}h_{j}^{-1}(J(h_{1}))$ for each $j=2,3.$ 
In particular, $h_{j}(K(h_{1}))\subset \mbox{int}(K(h_{1}))$ for each $j=2,3.$ 
Therefore, $\hat{K}(G)=K(h_{1}).$ 
Similarly, we obtain that 
for each $i=1,2,$ 
 $h_{i}^{-1}(J(h_{3}))$ is connected, 
 $h_{i}^{-1}(J(h_{3}))<_{s}J(h_{3})$ and  
 $h_{i}(F_{\infty }(h_{3}))\subset F_{\infty }(h_{3}).$ 
 Therefore $F_{\infty }(G)=F_{\infty }(h_{3}).$ 
 Let $A:=K(h_{3})\setminus \mbox{int}(K(h_{1})).$ 
 From the above arguments, $\bigcup _{j=1}^{3}h_{j}^{-1}(A)\subset A.$ 
 Therefore by \cite[Corollary 3.2]{HM}, $J(G)\subset A.$ 
Moreover, since $J_{1}\neq J_{3}$, 
 $\langle h_{1},h_{3}\rangle \in {\cal G}_{dis}.$ 
By Claim 2 in the proof of Theorem~\ref{t:2gengdis}, 
$h_{1}^{-1}(A)\cap h_{3}^{-1}(A)=\emptyset .$ 
Hence, it follows that $h_{1}^{-1}(J(G))\cap h_{3}^{-1}(J(G))=\emptyset .$   
 
 Thus we have proved Claim 1. 
 
By Claim 1 and (\ref{eq:kisol}), 
we obtain that exactly one of the following (I), (II), (III) holds. 
(I) $\{ h_{i}^{-1}(J(G))\} _{i=1,2,3}$ are mutually disjoint. 
(II) $h_{1}^{-1}(J(G))\cap (\bigcup _{j=2,3}h_{j}^{-1}(J(G)))=\emptyset $ and 
$h_{2}^{-1}(J(G))\cap h_{3}^{-1}(J(G))\neq \emptyset .$ 
(III)  $h_{3}^{-1}(J(G))\cap (\bigcup _{j=1,2}h_{j}^{-1}(J(G)))=\emptyset $ and 
$h_{1}^{-1}(J(G))\cap h_{2}^{-1}(J(G))\neq \emptyset .$ 
 
 Suppose we have Case (I). Then by Proposition~\ref{p:mgengdis}-\ref{p:mgengdis2}, 
 $J_{\min }=J(h_{1})$ and $J_{\max }=J(h_{3}).$ 
 Hence $F_{\infty }(G)=F_{\infty }(h_{3}).$ 
 By \cite[Theorem 2.20-5]{SdpbpI}, 
 $h_{j}(J(h_{1}))\subset \mbox{int}(\hat{K}(G))\subset \mbox{int}(K(h_{1}))$ for each 
 $j=2,3.$ 
 Therefore $\hat{K}(G)=K(h_{1}).$ Thus statement (1) of our theorem holds. 
 
 Suppose we have Case (III). 
Since $h_{3}^{-1}(J(G))\cap (\bigcup _{j=1}^{2}h_{j}^{-1}(J(G)))=\emptyset $,  
by \cite[Lemma 4.13-4]{SdpbpI} and \cite[Lemmas 3.5, 3.6]{SdpbpIII} we obtain that  
 $\bigcap _{n=1}^{\infty }h_{3}^{-n}(J(G))$ is a connected component of $J(G).$ 
 Since $J(h_{3})\cap J_{\min }=\emptyset $, by \cite[Theorem 2.20-4,5]{SdpbpI} 
 int$(K(h_{3}))$ is connected and there exists an attracting fixed point $z_{0}$ of 
$h_{3}$ in int$(\hat{K}(G))$ such that int$(K(h_{3}))$ is the immediate basin of $z_{0} $ 
for the dynamics of $h_{3}.$ Therefore $\bigcap _{n=1}^{\infty }h_{3}^{-n}(J(G))=J(h_{3}). $ 
Since $J(h_{3})\subset J_{\max }$, we obtain that $J(h_{3})=J_{\max}.$ 
Therefore $F_{\infty }(G)=F_{\infty }(h_{3}).$ 
 Thus statement (3) of our theorem holds. 
 
 Suppose we have Case (II). By the arguments similar to those in Case (III), 
we obtain that  $\bigcap _{n=1}^{\infty }h_{1}^{-n}(J(G))$ is a connected component of $J(G).$ 
Since $J(h_{1})\subset J_{\min }\cap \bigcap _{n=1}^{\infty }h_{1}^{-n}(J(G))$, 
it follows that $J_{\min }=\bigcap _{n=1}^{\infty }h_{1}^{-n}(J(G))\subset K(h_{1}).$ 
Moreover, since $(J(h_{2})\cup J(h_{3}))\cap J_{\min }=\emptyset $, 
by \cite[Theorem 2.20-5]{SdpbpI} 
we obtain that $h_{j}(J(h_{1}))\subset \mbox{int}(\hat{K}(G))\subset \mbox{int}(K(h_{1}))$ for each 
$j=2,3.$ 
Hence $K(h_{1})=\hat{K}(G)$ and 
$\mbox{int}(K(h_{1}))\subset \mbox{int}(\hat{K}(G))\subset F(G).$ 
Therefore $J_{\min }=J(h_{1}).$ 
Thus statement (2) of our theorem holds. 
 
 Combining all of the above arguments, we obtain that 
 (a) if $J_{1}=J_{2}$, then statement (3) of our theorem holds, and  
 (b) if $J_{2}=J_{3}$, then statement (2) of our theorem holds. 
 We now suppose $h_{2}^{-1}(J(G))\cap (\bigcup _{j=1,3}h_{j}^{-1}(J(G)))=\emptyset .$ Then 
 by Claim 1, Case (I) holds. Therefore statement (1) of our theorem holds. 
 Thus we have proved Theorem~\ref{t:3genmain}.
\qed

We now prove Corollary~\ref{c:3gengdis}. 

{\bf Proof of Corollary \ref{c:3gengdis}:} 
By Theorem~\ref{t:3genmain}, 
there exists a number $i\in \{ 1,2,3\} $ such that 
$h_{i}^{-1}(J(G))\cap (\bigcup _{j:j\neq i}h_{j}^{-1}(J(G)))=\emptyset $ and 
either $J(h_{i})=J_{\max }(G)$ or $J(h_{i})=J_{\min }(G).$ 

Suppose $J(h_{i})=J_{\min }(G).$ Let $j\in \{ 1,2,3\} $ be an element with $j\neq i.$  
By \cite[Proposition 2.2(3)]{S7}, 
for each $z\in J(h_{i})$,  
$d(z,J(h_{j}h_{i}^{k}))\rightarrow 0$ as $k\rightarrow \infty .$ 
For each $k$, let $I_{k}\in \mbox{Con}(J(G))$ with $J(h_{j}h_{i}^{k})\subset I_{k}.$ 
Then 
by the compactness of the space of all non-empty connected compact subsets of $\CCI $ with respect to 
the Hausdorff metric, 
we obtain that $I_{k}\rightarrow J(h_{i})$ as $k\rightarrow \infty $ with respect to 
the Hausdorff metric. 
Moreover, for each $k$, we have $I_{k}\neq J_{\min }(G)$ since 
$I_{k}\subset h_{j}^{-1}(J(G))$ and $J_{\min }(G)\subset h_{i}^{-1}(J(G)).$ 
Let $\{ J_{n}\} _{n=1}^{\infty }$ be a subsequence of 
$\{ I_{k}\}$ such that 
$J_{1}>_{s}J_{2}>_{s}\cdots >_{s}J(h_{i})$ and $J_{n}\rightarrow J(h_{i})$ as $n\rightarrow \infty .$ 
By \cite[Lemma 4.4]{SdpbpI}, for each $n$ 
 there exists a doubly connected component $A_{n}$ of $F(G)$ with 
$J_{n}>_{s}A_{n}>_{s}J_{n+1}.$ 
Then $\overline{A_{n}}\rightarrow J(h_{i})$ as $n\rightarrow \infty .$ 

 Suppose $J(h_{i})=J_{\max }.$ 
By the arguments similar to those in the previous paragraph, we obtain that 
there exists a sequence $\{ J_{n}\} $ of mutually distinct elements in Con$(J(G))$ and 
a sequence $\{ A_{n}\} $ of mutually distinct doubly connected components of $F(G)$ such that 
$J_{n}\rightarrow J(h_{i})$ and $\overline{A_{n}}\rightarrow J(h_{i})$ as $n\rightarrow \infty $ 
with respect to the Hausdorff metric. 
 Thus we have proved Corollary~\ref{c:3gengdis}.   
\qed 
\vspace{-4mm}
\section{Examples}
\label{Examples} 
\vspace{-3mm} 
In this section we give some examples. 
\begin{df}
Let $G$ be a polynomial semigroup. We say that 
$G$ is semi-hyperbolic if 
there exists an $N\in \NN $ and a $\delta >0$ such that 
for each $z\in J(G)$ and for each $g\in G$, 
$\deg (g:V\rightarrow B(z,\delta ))\leq N$ for each 
$V\in \mbox{Con}(g^{-1}(B(z,\delta ))).$ Here, 
$\deg $ denotes the degree of finite branched covering. 
We say that $G$ is hyperbolic if $P(G)\subset F(G).$ 
\end{df}
\begin{prop}[Proposition 2.40 in \cite{SdpbpI}]
\label{Constprop}
Let $G$ be a 
polynomial semigroup generated by 
a compact subset $\G $ of ${\cal P}.$ 
Suppose that $G\in {\cal G}$ and  
{\em int}$(\hat{K}(G))\neq \emptyset .$ 
Let $b\in $ {\em int}$(\hat{K}(G)).$ 
Moreover, let $d\in \NN $ be any positive integer such that 
$d\geq 2$, and such that 
$(d, \deg (h))\neq (2,2)$ for each $h\in \G .$ 
Then, there exists a number $c>0$ such that 
for each $a\in \CC $ with $0<|a|<c$, 
there exists a compact neighborhood $V$ of 
$g_{a}(z)=a(z-b)^{d}+b$ in ${\cal P}$ 
satisfying 
that for any non-empty subset $V'$ of $V$,  
the polynomial semigroup 
$\langle \G\cup V'\rangle $ generated by the family $\G \cup V'$ 
belongs to ${\cal G}_{dis}$ and $\hat{K}(\langle \G\cup V'\rangle)=\hat{K}(G)$.  
Moreover, in addition to the assumption above, 
if $G$ is semi-hyperbolic (resp. hyperbolic), 
then the above $\langle \G\cup V'\rangle $ is semi-hyperbolic (resp. hyperbolic).   
\end{prop}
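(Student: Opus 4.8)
The statement to prove is Proposition~\ref{Constprop} (quoted from \cite{SdpbpI}). The plan is to construct, near a degenerate perturbation $g_a(z) = a(z-b)^d + b$ with $b \in \mathrm{int}(\hat{K}(G))$ and $|a|$ small, a neighborhood $V \subset \mathcal{P}$ whose adjunction to $\Gamma$ keeps the smallest filled-in Julia set fixed and forces the Julia set to disconnect.

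First I would record the mechanism that makes $\hat{K}$ stable: since $b \in \mathrm{int}(\hat{K}(G))$, which is open and forward-invariant under $G$, I can pick a Jordan domain $W$ with $\overline{W} \subset \mathrm{int}(\hat{K}(G))$, $b \in W$, and such that every $h \in \Gamma$ maps $\overline{\hat{K}(G)}$ (or a slightly larger compact forward-invariant set) into $\mathrm{int}(\hat{K}(G))$ — this uses compactness of $\Gamma$ and the fact that $P^*(G)$ is bounded, so $\hat{K}(G)$ is a ``nice'' compact set. Then for $|a|$ small enough, $g_a(\CCI \setminus \mathrm{int}(K(h_0))) \subset W$ for some fixed $h_0$; more to the point, $g_a(\hat{K}(G)) \subset W \subset \mathrm{int}(\hat{K}(G))$ and also $g_a$ maps a large disk into $W$. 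The key point is that adding $g_a$ (and maps $C^0$-close to it) cannot enlarge $\hat{K}$: the orbit of any bounded point stays bounded because once an orbit hits $\hat{K}(G)$ it stays there, $g_a$ sends $\hat{K}(G)$ back inside, and $g_a$ sends a huge region into $\hat{K}(G)$ as well, so $\hat{K}(\langle \Gamma \cup V' \rangle) \subseteq \hat{K}(G)$; the reverse inclusion is immediate since $\langle \Gamma \cup V'\rangle \supset G$. This should be made precise with an explicit choice of $c$ and a compact neighborhood $V$ of $g_a$ in $\mathcal{P}$ (recall $g_n \to g$ in $\mathcal{P}$ means degrees stabilize and coefficients converge), shrinking $V$ so the above inclusions persist uniformly.

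Next I would establish disconnectedness of $J(\langle \Gamma \cup V'\rangle)$. The idea is that $g_a$ has its own Julia set $J(g_a)$ — a small topological circle (or Cantor set) near the circle $\{|z-b| = |a|^{-1/(d-1)}\}$ when $|a|$ is small — which sits deep inside $\mathrm{int}(\hat{K}(G))$, strictly surrounded by $b$ and strictly surrounding nothing of $J(G)$. Since $J(G) \subseteq \hat{K}(G) \setminus \mathrm{int}(\hat{K}(G))$ fails in general (actually $J(G)$ can meet $\partial \hat{K}(G)$), I need instead: $J(g_a) <_s J_{\min}(G)$ in the surrounding order, i.e. $J(g_a)$ is surrounded by the whole Julia set of $G$. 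Because $J(\langle \Gamma \cup V'\rangle) \supseteq J(g_a) \cup J(G)$ and $J(g_a)$ is a nonempty compact set strictly inside a bounded complementary component of $J(G)$ (as $J(G)$ surrounds $\mathrm{int}(\hat{K}(G)) \ni J(g_a)$), the total Julia set has at least two ``levels'' of connected components in the surrounding order, hence is disconnected. I would cite the backward self-similarity $J = \bigcup h^{-1}(J)$ only for bookkeeping; the real content is the surrounding-order separation, which follows from the geometry of $\hat{K}(G)$ and the smallness of $J(g_a)$.

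Finally, for the semi-hyperbolic and hyperbolic addenda: if $G$ is hyperbolic, $P(G) \subset F(G)$; the new postcritical set of $\langle \Gamma \cup V'\rangle$ is $P(G)$ together with the forward $\langle\Gamma\cup V'\rangle^*$-orbit of the critical values of the new maps, but all new critical values lie near $b \in \mathrm{int}(\hat{K}(G))$ and their orbits stay in $\mathrm{int}(\hat{K}(G)) \subseteq F(G) \subseteq F(\langle\Gamma\cup V'\rangle)$ — wait, I must check $\mathrm{int}(\hat{K}(G)) \subseteq F(\langle\Gamma\cup V'\rangle)$, which holds because $\hat{K}$ is unchanged and $\mathrm{int}(\hat{K})$ of a postcritically bounded semigroup lies in the Fatou set. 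So $P(\langle\Gamma\cup V'\rangle) \subset F(\langle\Gamma\cup V'\rangle)$, giving hyperbolicity. For semi-hyperbolicity I would use the characterization via no critical point of a generator accumulating badly on $J$: near $J(G)$ nothing changes, and near $J(g_a)$ the only critical point is the single critical point of $g_a$ at $z = b$, which maps immediately to $b \in \mathrm{int}(\hat{K})$, so the semi-hyperbolicity constant is controlled; one checks the uniform degree bound on $g^{-1}(B(z,\delta))$ survives the perturbation. The main obstacle I anticipate is making the surrounding-order separation between $J(g_a)$ and $J_{\min}(G)$ genuinely uniform over the neighborhood $V'$ while simultaneously keeping $\hat{K}$ pinned — i.e. choosing $c$ and $V$ so that all the inclusions (orbit trapping, Julia set of the perturbation staying small and deep) hold at once; this is a delicate but routine compactness-and-continuity argument once the right open forward-invariant neighborhood $W$ of $\overline{J(g_a) \cup b}$ inside $\mathrm{int}(\hat{K}(G))$ is fixed.
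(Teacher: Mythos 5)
Your argument for disconnectedness rests on a mistaken picture of $J(g_{a})$ and then on a non sequitur. For $g_{a}(z)=a(z-b)^{d}+b$ with $|a|$ small, $b$ is a superattracting fixed point whose basin is enormous: the repelling fixed points satisfy $(z-b)^{d-1}=1/a$, and $K(g_{a})$ is approximately the disk $D(b,|a|^{-1/(d-1)})$, so $J(g_{a})$ is a huge circle-like set of radius about $|a|^{-1/(d-1)}\rightarrow \infty $ which \emph{surrounds} $J(G)$ and $\hat{K}(G)$; it does not sit inside $\mbox{int}(\hat{K}(G))$, and the relation you want, $J(g_{a})<_{s}J_{\min }(G)$, is false (the correct one is $J_{\max }(G)<_{s}J(g_{a})$) --- note your own formula $|z-b|=|a|^{-1/(d-1)}$ already gives a large radius, contradicting ``deep inside $\mbox{int}(\hat{K}(G))$''. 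More importantly, even after flipping the picture, the inference ``$J(\langle \G \cup V'\rangle )$ contains $J(G)$ and $J(g_{a})$, one surrounding the other, hence it is disconnected'' is invalid: the Julia set of the enlarged semigroup also contains the Julia sets of all mixed words in $\G \cup V'$, and these interpolate between $J(G)$ and $J(g_{a})$ and may fill the region in between (a closed annulus contains two disjoint nested circles and is connected). Disconnectedness requires producing an open annulus contained in $F(\langle \G \cup V'\rangle )$, i.e.\ a normality statement about \emph{all} elements of the semigroup, not just the generators.

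This is exactly where the hypothesis $(d,\deg (h))\neq (2,2)$ --- which you never use --- must enter; it is equivalent to $\frac{1}{d}+\frac{1}{\deg (h)}<1$ for every $h\in \G $, and without it the statement and your argument genuinely fail. Take $\G =\{ z^{2}\} $, $b=0$, $d=2$, $g_{a}(z)=az^{2}$: every element of $\langle z^{2},az^{2}\rangle $ has the form $cz^{2^{n}}$, whose Julia set is the circle of radius $(1/|a|)^{m/(2^{n}-1)}$ for some integer $0\leq m\leq 2^{n}-1$, and since $\{ m/(2^{n}-1)\} $ is dense in $[0,1]$, one checks that $J(\langle z^{2},az^{2}\rangle )$ is exactly the closed annulus $\{ 1\leq |z|\leq 1/|a|\} $, which is connected (while $\hat{K}$ is unchanged and the semigroup is postcritically bounded). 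So any proof that ignores the degree condition proves something false; the condition is what creates a definite gap between the backward images of the relevant sets under the two ``extreme'' generators (compare the mechanism in Definition~\ref{d:Psi} and Lemma~\ref{l:djsum}, where $\sum _{j}1/\deg (h_{j})<1$ is precisely what yields gaps in $M(\Psi (G))$ and hence separating annuli), and a correct argument must control all compositions through such a contraction estimate. Two further, smaller points: to conclude $\langle \G \cup V'\rangle \in {\cal G}_{dis}$ you must also verify postcritical boundedness of the enlarged semigroup (you only touch this inside the hyperbolicity addendum), and your two inclusions for $\hat{K}$ are swapped --- $\hat{K}(\langle \G \cup V'\rangle )\subset \hat{K}(G)$ is the trivial one, while $\hat{K}(G)\subset \hat{K}(\langle \G \cup V'\rangle )$ is the one that needs your orbit-trapping argument. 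Finally, note that the present paper does not prove Proposition~\ref{Constprop}; it quotes it from \cite{SdpbpI}, so the comparison above is with what any complete proof must contain rather than with an in-paper argument.
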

\begin{prop}[Proposition 6.1 in \cite{Splms10}] 
\label{semihyposcexprop}
Let $h_{1}\in {\cal P}.$ 
Suppose that $K(h_{1})$ is connected and {\em int}$(K(h_{1}))$ is not empty. 
Let $b\in \mbox{{\em int}}(K(h_{1}))$ be a point. 
Let $d$ be a positive integer such that 
$d\geq 2.$ Suppose that $(\deg (h_{1}),d)\neq (2,2).$ 
Then, there exists a number $c>0$ such that 
for each $\l \in \{ \l\in \Bbb{C}: 0<|\l |<c\} $, 
setting $h_{\l }=(h_{\l ,1},h_{\l ,2})=
(h_{1},\l (z-b)^{d}+b )$ and $G_{\l }:= \langle h_{1},h_{\l, 2}\rangle $, 
 we have all of the following.
\begin{itemize}
\item[{\em (a)}] 
$G_{\lambda }\in {\cal G}_{dis}.$ 
Moreover, 
$h_{\l }$ satisfies the open set condition with 
an open subset $U_{\l }$ of $\CCI $ (i.e., $h_{\l ,1}^{-1}(U_{\l })\cup h_{\l, 2}^{-1}(U_{\l })\subset U_{\l }$ and 
$h_{\l ,1}^{-1}(U_{\l })\cap h_{\l ,2}^{-1}(U_{\l })=\emptyset $), 
$h_{\l ,1}^{-1}(J(G_{\l }))\cap h_{\l, 2}^{-1}(J(G_{\l }))=\emptyset $, 
{\em int}$(J(G_{\l }))=\emptyset $, 
$J_{\ker }(G_{\l })=\emptyset $, 
$G_{\l }(K(h_{1}))\subset K(h_{1})\subset \mbox{{\em int}}(K(h_{\lambda ,2}))$  
and 
$\emptyset \neq K(h_{1})\subset \hat{K}(G_{\l }).$ 
\item[{\em (b)}]
If $h_{1}$ is semi-hyperbolic (resp. hyperbolic),
then 
$G_{\l }$ is semi-hyperbolic (resp. hyperbolic), 
$J(G_{\l } )$ is porous (for the definition of porosity, see \cite{S7}),  and 
$\dim _{H}(J(G_{\l }))<2$.  
\end{itemize} 
\end{prop}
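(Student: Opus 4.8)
The plan is to read off the structural assertions — that $G_{\l }\in {\cal G}_{dis}$, that $\hat{K}(G_{\l })=K(f_{1})$, and that semi-hyperbolicity and hyperbolicity pass from $f_{1}$ to $G_{\l }$ — from Proposition~\ref{Constprop}, and then to prove the remaining statements (the open set condition, the disjointness $f_{1}^{-1}(J(G_{\l }))\cap f_{\l ,2}^{-1}(J(G_{\l }))=\emptyset $, and $\mbox{int}(J(G_{\l }))=\emptyset $) by a direct radial size estimate for the two backward branches, in which the hypothesis $(\deg (f_{1}),d)\neq (2,2)$ is the decisive ingredient.

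I would begin by recording the elementary picture of $f_{\l ,2}(z)=\l (z-b)^{d}+b$: conjugated by $z\mapsto z-b$ it becomes $w\mapsto \l w^{d}$, so $b$ is a superattracting fixed point, $K(f_{\l ,2})=\{ |z-b|\leq |\l |^{-1/(d-1)}\} $, $J(f_{\l ,2})=\partial K(f_{\l ,2})$, and the only finite critical value of $f_{\l ,2}$ is $b$; as $\l \to 0$ these disks exhaust $\CC $, which drives all the asymptotics below. Applying Proposition~\ref{Constprop} with $\G =\{ f_{1}\} $ — legitimate because $\langle f_{1}\rangle \in {\cal G}$ ($K(f_{1})$ is connected), $\hat{K}(\langle f_{1}\rangle )=K(f_{1})$ has nonempty interior, $b\in \mbox{int}(K(f_{1}))$, and $(d,\deg (f_{1}))\neq (2,2)$ — and taking $V'=\{ f_{\l ,2}\} $ there, I obtain $c_{0}>0$ so that for $0<|\l |<c_{0}$ one has $G_{\l }=\langle f_{1},f_{\l ,2}\rangle \in {\cal G}_{dis}$, $\hat{K}(G_{\l })=K(f_{1})$, and $G_{\l }$ semi-hyperbolic (resp.\ hyperbolic) whenever $f_{1}$ is (as $\langle f_{1}\rangle $ is semi-hyperbolic iff $f_{1}$ is). Since $G_{\l }\in {\cal G}_{dis}$, Lemma~\ref{l:pbjkerem} yields $J_{\ker }(G_{\l })=\emptyset $ and $\mbox{int}(K(f_{1}))=\mbox{int}(\hat{K}(G_{\l }))\subset F(G_{\l })$; after further shrinking $c_{0}$ so that $K(f_{1})\subset \{ |z-b|<|\l |^{-1/(d-1)}\} $ and $f_{\l ,2}(K(f_{1}))\subset \mbox{int}(K(f_{1}))$ (the latter holding since $f_{\l ,2}(K(f_{1}))$ shrinks to $\{ b\} $), one gets $G_{\l }(K(f_{1}))\subset K(f_{1})\subset \mbox{int}(K(f_{\l ,2}))$, hence $\emptyset \neq K(f_{1})\subset \hat{K}(G_{\l })$.

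For the open set condition I would take $U_{\l }:=\mbox{int}(K(f_{\l ,2}))\setminus K(f_{1})$, a non-empty open annular set. The inclusion $f_{1}^{-1}(U_{\l })\subset U_{\l }$ follows from $f_{1}(\CCI \setminus \mbox{int}(K(f_{\l ,2})))\subset \CCI \setminus \mbox{int}(K(f_{\l ,2}))$ (valid for small $|\l |$ because $|f_{1}(z)-b|\geq \frac{1}{2}|a(f_{1})|\,|z-b|^{\deg (f_{1})}\geq |\l |^{-1/(d-1)}$ on $\{ |z-b|\geq |\l |^{-1/(d-1)}\} $ once $|\l |$ is small, using $\deg (f_{1})\geq 2$) together with $f_{1}^{-1}(K(f_{1}))=K(f_{1})$ and $K(f_{1})\subset \mbox{int}(K(f_{\l ,2}))$; the inclusion $f_{\l ,2}^{-1}(U_{\l })\subset U_{\l }$ follows from total invariance of $K(f_{\l ,2})$ under $f_{\l ,2}$ and from $f_{\l ,2}^{-1}(K(f_{1}))\supset K(f_{1})$. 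For disjointness, fix $\rho _{0}>0$ with $\{ |z-b|<\rho _{0}\} \subset K(f_{1})$; then $f_{\l ,2}^{-1}(U_{\l })\subset \{ |f_{\l ,2}(z)-b|\geq \rho _{0}\} =\{ |z-b|\geq (\rho _{0}/|\l |)^{1/d}\} $, whereas $f_{1}^{-1}(U_{\l })\subset f_{1}^{-1}(\mbox{int}(K(f_{\l ,2})))\subset \{ |z-b|\leq C|\l |^{-1/((d-1)\deg (f_{1}))}\} $ for small $|\l |$, with $C$ depending only on $f_{1}$. The assumption $(\deg (f_{1}),d)\neq (2,2)$ is precisely the inequality $(d-1)\deg (f_{1})>d$, so $|\l |^{-1/((d-1)\deg (f_{1}))}/|\l |^{-1/d}\to 0$ as $\l \to 0$, and for $|\l |$ small the two regions are disjoint; thus $f_{1}^{-1}(U_{\l })\cap f_{\l ,2}^{-1}(U_{\l })=\emptyset $ and the open set condition holds. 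Running the same two-sided estimate with $J(G_{\l })$ in place of $U_{\l }$ — using $J(G_{\l })\subset K(f_{\l ,2})\setminus \mbox{int}(K(f_{1}))$, which is forced by $\mbox{int}(K(f_{1}))\subset F(G_{\l })$ and $F_{\infty }(f_{\l ,2})=\CCI \setminus K(f_{\l ,2})\subset F(G_{\l })$ (on the latter set every orbit under an arbitrary composition of $f_{1},f_{\l ,2}$ tends to $\infty $ locally uniformly) — gives $f_{1}^{-1}(J(G_{\l }))\cap f_{\l ,2}^{-1}(J(G_{\l }))=\emptyset $ (equivalently, this is immediate from statement~\ref{t:2gengdis1} of Theorem~\ref{t:2gengdis} once $G_{\l }\in {\cal G}_{dis}$ is known); and then statement~\ref{t:hnondiffp0} of Theorem~\ref{t:hnondiffp}, whose hypotheses are now verified, gives $\mbox{int}(J(G_{\l }))=\emptyset $. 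Finally, for statement (b): $G_{\l }$ is semi-hyperbolic (hyperbolicity being a special case) and $J(G_{\l })\neq \CCI $ since $F(G_{\l })\supset F_{\infty }(f_{\l ,2})\neq \emptyset $, so $J(G_{\l })$ is porous by the results on semi-hyperbolic polynomial semigroups in \cite{S7}; a porous subset of $\CC \cong \RR ^{2}$ has Hausdorff dimension strictly less than $2$, so $\dim _{H}(J(G_{\l }))<2$.

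The step I expect to be the main obstacle is the open set condition together with the branch-disjointness: making $f_{1}^{-1}(U_{\l })$ and $f_{\l ,2}^{-1}(U_{\l })$ (and likewise the preimages of $J(G_{\l })$) separate forces a sharp comparison of the exponents $1/((d-1)\deg (f_{1}))$ and $1/d$ in the radial estimates as $\l \to 0$, and this comparison is exactly — and the only place where — the hypothesis $(\deg (f_{1}),d)\neq (2,2)$ is used.
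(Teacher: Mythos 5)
The paper itself contains no proof of this proposition: it is imported verbatim as Proposition 6.1 of \cite{Splms10}, so there is no internal argument to compare yours against, and I can only judge your reconstruction on its own merits. On that basis it is essentially sound, and it follows the natural route: Proposition~\ref{Constprop} applied to $\Gamma =\{ f_{1}\}$ (legitimate, since $K(f_{1})$ connected gives $\langle f_{1}\rangle \in {\cal G}$ and $\hat{K}(\langle f_{1}\rangle )=K(f_{1})$) yields $G_{\lambda }\in {\cal G}_{dis}$, $\hat{K}(G_{\lambda })=K(f_{1})$ and the transfer of (semi-)hyperbolicity; Lemma~\ref{l:pbjkerem} gives $J_{\ker }(G_{\lambda })=\emptyset $ and $\mbox{int}(K(f_{1}))\subset F(G_{\lambda })$; and your explicit computation $K(f_{\lambda ,2})=\{ |z-b|\leq |\lambda |^{-1/(d-1)}\}$ together with the two radial estimates ($f_{\lambda ,2}^{-1}$ of the annular region lies in $\{ |z-b|\geq (\rho _{0}/|\lambda |)^{1/d}\}$, $f_{1}^{-1}$ of it lies in $\{ |z-b|\leq C|\lambda |^{-1/((d-1)\deg f_{1})}\}$) correctly reduces both the open set condition for $U_{\lambda }=\mbox{int}(K(f_{\lambda ,2}))\setminus K(f_{1})$ and the disjointness of $f_{1}^{-1}(J(G_{\lambda }))$ and $f_{\lambda ,2}^{-1}(J(G_{\lambda }))$ to the inequality $(d-1)\deg (f_{1})>d$, which is exactly the hypothesis $(\deg (f_{1}),d)\neq (2,2)$. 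Invoking Theorem~\ref{t:hnondiffp}, statement~\ref{t:hnondiffp0}, for $\mbox{int}(J(G_{\lambda }))=\emptyset $ is logically admissible inside this paper (that theorem does not depend on the present proposition), though it is anachronistic relative to the original source, where the corresponding fact is obtained from the lemmas of \cite{Splms10} directly; the same remark applies to your alternative appeal to Theorem~\ref{t:2gengdis}.

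Two small points to tighten. First, your closing claim that $(\deg (f_{1}),d)\neq (2,2)$ enters \emph{only} through the exponent comparison is not quite accurate in your own write-up, since Proposition~\ref{Constprop}, which you use for ${\cal G}_{dis}$ membership and the (semi-)hyperbolicity transfer, carries the same hypothesis. Second, the porosity step is stated too loosely: you justify it by ``semi-hyperbolic and $J(G_{\lambda })\neq \CCI $'' with a generic reference to \cite{S7}, but you should pin down the precise hypotheses of the porosity theorem you are citing (whether it needs only $J(G)\neq \CCI $ for a finitely generated semi-hyperbolic semigroup, or in addition a separation/open set condition); since you have already established the open set condition and the disjointness of the backward branches, you have whatever hypothesis is required at hand, so this is a citation gap rather than a mathematical one, and the implication porous $\Rightarrow \dim _{H}<2$ is standard.
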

For the dynamics of (semi-)hyperbolic rational semigroups,  
see \cite{S1, S4,S6,S7,SdpbpI,SdpbpII,SdpbpIII, SU2,SU4}. For the study of the Hausdorff dimension of the Julia sets 
of (semi-)hyperbolic rational semigroups (with open set condition),  
see \cite{S6, S7, SU2,SU4}.  

 Regarding Proposition~\ref{semihyposcexprop}, 
 we can sometimes give the concrete values $c.$ 
\vspace{-1mm}
\begin{ex}[Devil's coliseum] 
\label{ex:dc1}
Let $h_{1}(z)=z^{2}-1$ and let $\lambda \in \CC $ with $0<|\lambda |\leq 0.01. $ 
Let $h_{2}(z)=\lambda z^{3}.$ 
Let $G=\langle h_{1},h_{2}\rangle $ and $\tau := \sum _{i=1}^{2}\frac{1}{2}\delta _{h_{i}}.$ 
Let $A:= K(h_{2})\setminus B$ 
where $B=D(0,0.4)\cup D(-1, 0.16)$.   
Then we have $\overline{B}\subset \mbox{int}(K(h_{1}))\subset D(0,2)$ and 
$h_{2}(K(h_{1}))\subset h_{2}(D(0,2))\subset \overline{B}\subset \mbox{int}(K(h_{1}))$. 
Therefore  
$P^{\ast }(G)\subset \mbox{int}(K(h_{1}))$. 
Hence $G$ is hyperbolic and $G\in {\cal G}.$  
Moreover, we have  
$h_{1}(B)\subset B$, $h_{2}(B)\subset B$, 
$K(h_{2})=\overline{D(0,|\lambda |^{-1/2})}$,
 $h_{1}^{-1}(K(h_{2}))\subset K(h_{2})$, 
 and $h_{2}^{-1}(K(h_{2}))\subset K(h_{2}).$ Hence  
$h_{1}^{-1}(A)\cup h_{2}^{-1}(A)\subset A$. 
Also, it is easy to see that $h_{1}^{-1}(A)\cap h_{2}^{-1}(A)=\emptyset .$ 
Therefore $J(G)\subset A$, 
$h_{1}^{-1}(J(G))\cap h_{2}^{-1}(J(G))=\emptyset $, $G\in {\cal G}_{dis}$ and $\emptyset \neq K(h_{1})\subset \hat{K}(G).$ 
By Theorems~\ref{randomthm1} and \ref{t:hnondiffp}, 
we obtain that 
$J_{\ker}(G)=\emptyset $,  
$T_{\infty ,\tau }$ is \Hol der continuous on $\CCI $, the set of varying points of 
$T_{\infty ,\tau }$ is equal to $J(G),$ 
and for each non-empty open subset $U$ of $J(G)$ there exists an uncountable dense 
subset $A_{U}$ of $U$ such that for each $z\in A_{U}$, 
$T_{\infty ,\tau }$ is not differentiable at $z.$ 
By Theorem~\ref{t:hnondiffp} and \cite[Theorem 3.82]{Splms10}, 
there exists a Borel subset $A$ of $J(G)$ with $\dim _{H}(A)\geq 
1+\frac{2\log 2}{\log 2+\log 3}\fallingdotseq 1.7737$ 
such that for each $z\in A$, 
$\Hol(T_{\infty ,\tau },z)= u(h,p,\mu )=\frac{2\log 2}{\log 2+\log 3}
\fallingdotseq 0.7737$ and 
$T_{\infty ,\tau }$ is not differentiable at $z.$  
Moreover, since $G$ is hyperbolic and $h_{1}^{-1}(J(G))\cap h_{2}^{-1}(J(G))=\emptyset $, 
$\dim _{H}(J(G))<2$ (see \cite{S2} or \cite[Theorem 3.82]{Splms10}).  
It is easy to see that $\Min (G_{\tau },\CCI )=\{ \{ \infty \} ,\{ 0\} \} .$ 
Thus regarding statements in Theorem~\ref{randomthm1} for $\tau $, 
$L_{\tau }=\{ 0\} $ and $\mu  _{\tau }=\delta _{0}.$  
$T_{\infty ,\tau }$ is called a devil's coliseum. 
It is a complex analogue of the devil's staircase. 
\end{ex}
\begin{rem}
\label{r:sdgdis} 
By Proposition~\ref{Constprop}, 
there exists a $2$-generator polynomial semigroup 
$G=\langle h_{1},h_{2}\rangle $ in ${\cal G}_{dis}$ such that 
$h_{1}$ has a Siegel disk.  
In fact, Proposition~\ref{Constprop} implies that 
for each $h_{1}\in {\cal P}$ with $\langle h_{1}\rangle \in {\cal G}$ 
which has a Siegel disk, 
there exists an element $h_{2}\in {\cal P}$ such that 
$G=\langle h_{1},h_{2}\rangle $ belongs to ${\cal G}_{dis}.$ 
Note that for such a $G$, we can apply 
 Theorems~\ref{randomthm1}, \ref{t:hnondiffp}, \ref{t:2gengdis}, 
 even though $G$ is not semi-hyperbolic. 
\end{rem}

\begin{ex}
\label{ex:Siegeldc} 
Let $\theta \in \RR $ be a Brjuno number (\cite{Br, Y}) and let 
$f_{1}(z)=e^{2\pi i \theta }z+ z^{2}$ Then $f_{1}$ has a Siegel disk with center $0$ (\cite{Br,Y}). 
Applying Proposition~\ref{Constprop} or Proposition~\ref{semihyposcexprop} (with $b=0$), 
we obtain that there exists a number $c>0$ such that 
for each $\lambda \in \CC $ with $0<|\lambda |<c$, 
setting $f_{2}(z)=\lambda z^{3}$, we have that 
$G:=\langle f_{1},f_{2}\rangle \in G_{dis }.$ 
Since $f_{1}$ is not semi-hyperbolic, $G$ is not semi-hyperbolic. 
We can apply Theorems~\ref{randomthm1}, \ref{t:hnondiffp}, \ref{t:2gengdis}, 
to this $G$, 
even though $G$ is not semi-hyperbolic. 
\end{ex}
\begin{ex}
\label{ex:mgengdis}
Let $f_{1},f_{2}\in {\cal P}$ be two elements such that $G=\langle f_{1},f_{2}\rangle $ belongs 
to ${\cal G}_{dis}.$ Then by \cite[Theorems 1.5, 1.7]{S15}, 
$f_{1}^{-1}(J(G))\cap f_{2}^{-1}(J(G))=\emptyset .$ 
Let $n\geq 2$ and let $A$ be a non-empty subset of 
$\Lambda _{n}:= \{ f_{i_{1}}\circ \cdots \circ f_{i_{n}}\mid i_{1},\ldots, i_{n}\in \{ 1,2\}\} $ 
with $\sharp A\geq 2.$  
Let $G_{A}$ be the polynomial semigroup generated by $A.$ 
Then $J(G_{\Lambda _{n}})=J(G)$ (\cite[Theorem 2.4]{HM}). 
Thus for each $(g_{1},g_{2})\in A^{2}$ with $g_{1}\neq g_{2}$, we have 
$g_{1}^{-1}(J(G_{A}))\cap g_{2}^{-1}(J(G_{A}))=\emptyset .$ Moreover, 
$G_{A}\in {\cal G}_{dis}.$ 
For the semigroup $G_{A}$, we can apply  Theorems~\ref{randomthm1} and \ref{t:hnondiffp}. 
If $f_{1}$ is not semi-hyperbolic and $f_{1}^{n}\in A$, then $G_{A}$ is not semi-hyperbolic.   
\end{ex}
\begin{ex}
\label{ex:3gen}
Let $f_{1},f_{2}\in {\cal P}$ be two elements such that 
$\langle f_{1},f_{2}\rangle \in {\cal G}$, $J(f_{1})\cap J(f_{2})\neq \emptyset $ and 
$0\in \mbox{int}(\hat{K}(\langle f_{1},f_{2}\rangle ))$ (e.g., $f_{1}(z)=z^{2}-1$ and $f_{2}$ is a small 
perturbation of $f_{1}$). Let $d\in \NN  $ with $d\geq 3.$ 
Then by Proposition~\ref{Constprop}, 
there exists a number $c>0$ such that for each $a\in \CC $ with $0<|a|<c$, 
setting $f_{3}(z)=az^{d}$, we have that 
$G:=\langle f_{1},f_{2},f_{3}\rangle $ belongs to ${\cal G}_{dis}.$ 
For this $G$, we can apply Theorems~\ref{randomthm1}, \ref{t:3genmain} and Corollary~\ref{c:3gengdis}. 
Since $J(f_{1})\cap J(f_{2})\neq \emptyset $, we have 
$f_{1}^{-1}(J(G))\cap f_{2}^{-1}(J(G))\neq \emptyset .$ Thus Theorem~\ref{t:3genmain} implies 
that statement (3) of Theorem~\ref{t:3genmain} holds.  
\end{ex} 
\vspace{-7mm}


\begin{thebibliography}{90}
\bibitem{Be} A. Beardon, 
{\em Iteration of Rational Functions}, 
Graduate Texts in Mathematics 132, Springer-Verlag, 1991.
\bibitem{Br} A. D. Brjuno, 
{\em On convergence of transformations of differential equations to normal forms}, 
Dokl. Akad. Nauk SSSR {\bf 165}, 1965, 987-989 (Soviet Math. Dokl. {\bf 6}, 1965, 1536-1538). 
\bibitem{BBR} R. Br\"{u}ck,\ M. B\"{u}ger and  
S. Reitz,\ {\em Random iterations of polynomials of the 
form $z^{2}+c_{n}$: Connectedness of Julia sets},\ 
Ergodic Theory Dynam. Systems,\ 
 {\bf 19},\ (1999),\ No.5,\ 1221--1231. 

\bibitem{D} R. Devaney, {\em An Introduction to Chaotic 
Dynamical Systems 2nd ed.}, Perseus Books, 1989. 
\bibitem{Fa} K. J. Falconer, {\em Techniques in Fractal Geometry}, 
John Wiley \&  Sons, 1997.
\bibitem{FS} J. E. Fornaess and  N. Sibony,\ 
{\em Random iterations of rational functions},\ 
Ergodic Theory  Dynam. Systems, {\bf 11}(1991),\ 687--708.
\bibitem{GQL} Z. Gong, W. Qiu and Y. Li, 
{\em Connectedness of Julia sets for a quadratic random 
dynamical system}, Ergodic Theory Dynam. Systems, (2003), {\bf 23}, 
1807-1815.

\bibitem{GR} Z. Gong and F. Ren,\ {\em A random dynamical 
system formed by infinitely many functions, }
 Journal of Fudan University, {\bf 35}, 1996,\ 387--392.


\bibitem{HM} A. Hinkkanen and  G. J. Martin,
{\em The Dynamics of Semigroups of Rational Functions I},
 Proc. London Math. Soc. (3){\bf 73}(1996),\ 358-384.
\bibitem{J2} M. Jonsson,\ 
{\em Ergodic properties of fibered rational maps },\ 
Ark. Mat.,\ 38 (2000), pp 281--317.
\bibitem{MT} K. Matsumoto and I. Tsuda, {\em Noise-induced order}, 
 J. Statist. Phys. 31 (1983) 87-106. 
\bibitem{Se} O. Sester, {\em Combinatorial configurations of fibered polynomials}, 
Ergodic Theory Dynam. Systems,  {\bf 21} (2001), 915-955. 

\bibitem{SS} R. Stankewitz and H. Sumi, 
{\em Dynamical properties and structure of Julia sets of postcritically 
bounded polynomial semigroups}, 
Trans. Amer. Math. Soc.,   363 (2011), no. 10, 5293--5319.
\bibitem{S1} H. Sumi, 
{\em On dynamics of hyperbolic rational semigroups}, 
J. Math. Kyoto Univ., Vol. 37, No. 4, 1997, 717-733. 
\bibitem{S2} H. Sumi, 
{\em On Hausdorff dimension of Julia sets of hyperbolic rational semigroups}, 
Kodai Math. J., Vol. 21, No. 1, pp. 10-28, 1998.    
\bibitem{S3} H. Sumi,\ {\em Skew product maps related to finitely 
generated rational semigroups,\ } 
Nonlinearity,\ {\bf 13},\ (2000), 
995--1019.

\bibitem{S4} H. Sumi,\ {\em Dynamics of sub-hyperbolic and 
semi-hyperbolic rational semigroups and skew products},\  
Ergodic Theory Dynam. Systems, (2001),\ {\bf 21},\ 563--603.
\bibitem{S6} H. Sumi,\ 
{\em Dimensions of Julia sets of expanding rational semigroups},\ 
Kodai Mathematical Journal,\ Vol. 28,\ No. 2,\ 2005,\ pp390--422. 

\bibitem{S7} H. Sumi,\ 
{\em Semi-hyperbolic fibered rational maps and rational 
semigroups},\ 
Ergodic Theory Dynam. Systems, (2006), 
{\bf 26}, 893--922. 
\bibitem{S15} H. Sumi, {\em Interaction cohomology of forward or backward self-similar 
systems}, Adv. Math.,  222 (2009), no. 3, 729--781.

\bibitem{S11} H. Sumi, 
{\em Random dynamics of polynomials and devil's-staircase-like 
functions in the complex plane}, 
Appl. Math. Comput. 187 (2007) pp489-500. 
(Proceedings paper of a conference.)   
\bibitem{SdpbpI} H. Sumi, 
{\em Dynamics of postcritically bounded polynomial semigroups I: connected components 
of the Julia sets}, 
Discrete Contin. Dyn. Sys. Ser. A, Vol. 29, No. 3, 2011, 1205--1244.
\bibitem{SdpbpII} H. Sumi, 
{\em Dynamics of postcritically bounded polynomial semigroups II: 
fiberwise dynamics and the Julia sets}, 
J. London Math. Soc. (2), 88 (2013), 294-318.  
\bibitem{SdpbpIII} H. Sumi, 
{\em Dynamics of postcritically bounded polynomial semigroups III: 
classification of semi-hyperbolic semigroups and random Julia sets 
which are Jordan curves but not quasicircles}, 
Ergodic Theory Dynam. Systems, (2010), {\bf 30}, No. 6, 1869--1902. 

\bibitem{Splms10} H. Sumi, 
{\em Random complex dynamics and semigroups of holomorphic maps}, 
Proc. London Math. Soc. (2011), 102 (1), 50--112.  
\bibitem{Ssugexp} H. Sumi, 
{\em Rational semigroups, random complex dynamics and singular functions on the complex plane}, 
survey article, Selected Papers on Analysis and Differential Equations, 
Amer. Math. Soc. Transl. (2) Vol. 230, 2010, 161--200. 

\bibitem{Scp} H. Sumi, 
{\em Cooperation principle, stability and bifurcation in random complex dynamics}, 
Adv. Math. 245 (2013) 137--181. 

\bibitem{SU2} H. Sumi and M. Urba\'{n}ski,\ 
{\em Measures and dimensions of Julia sets of semi-hyperbolic 
rational semigroups}, Discrete and Continuous Dynamical Systems Ser. A, 
Vol 30, No. 1, 2011, 313--363. 
\bibitem{SU4} H. Sumi and M. Urba\'{n}ski,\ 
{\em Bowen Parameter and Hausdorff Dimension for Expanding Rational Semigroups}, 
Discrete and Continuous Dynamical Systems Ser. A, 32 (2012), no. 7, 2591--2606. 
\bibitem{SU5} H. Sumi and M. Urba\'{n}ski,\ 
{\em Transversality family of expanding rational semigroups}, 
Adv. Math. 234 (2013) 697--734. 
\bibitem{YHK} M. Yamaguti, M. Hata, and J. Kigami,
Mathematics of fractals. Translated from the 1993 Japanese original by Kiki Hudson. Translations of Mathematical Monographs, 167. American Mathematical Society, Providence, RI, 1997.
\bibitem{Y} J. -C. Yoccoz, {\em Lin\'{e}arization des germes de diff\'{e}omorphismes holomorphes de 
$(\CC , 0)$} ,C. R. Acad. Sci. Paris {\bf 306}, 1988, 55-58. 
\end{thebibliography}
\end{document}